\numberwithin{equation}{section}
\newtheorem{mytheorem}{Theorem}
\numberwithin{mytheorem}{section}
\newtheorem{mylemma}[mytheorem]{Lemma}
\newtheorem{myproposition}[mytheorem]{Proposition}
\newtheorem{mydefinition}[mytheorem]{Definition}
\newtheorem{myremark}[mytheorem]{Remark}
\newcommand{\ab}{\ensuremath{\mathrm{ab}}}
\newcommand{\alg}{\ensuremath{\mathrm{alg}}}
\newcommand{\AT}{\ensuremath{\mathrm{AT}}}
\newcommand{\Art}{\ensuremath{\mathrm{Art}}}
\newcommand{\Aut}{\ensuremath{\mathrm{Aut}}}
\newcommand{\cl}{\ensuremath{\mathrm{cl}}}
\newcommand{\cosoc}{\ensuremath{\mathrm{cosoc}}}
\newcommand{\End}{\ensuremath{\mathrm{End}}}
\newcommand{\et}{\ensuremath{\mathrm{\acute{e}t}}}
\newcommand{\Eul}{\ensuremath{\mathrm{Eul}}}
\newcommand{\ev}{\ensuremath{\mathrm{ev}}}
\newcommand{\Fr}{\ensuremath{\mathrm{Fr}}}
\newcommand{\Frss}{\ensuremath{\mathrm{Fr\text{-}ss}}}
\newcommand{\Gal}{\ensuremath{\mathrm{Gal}}}
\newcommand{\gln}{\ensuremath{\operatorname{GL}}}
\newcommand{\Gr}{\ensuremath{\mathrm{Gr}}}
\newcommand{\Hom}{\ensuremath{\mathrm{Hom}}}
\newcommand{\intal}{\ensuremath{\mathrm{intal}}}
\newcommand{\inte}{\ensuremath{\mathrm{int}}}
\newcommand{\Iw}{\ensuremath{\mathrm{Iw}}}
\newcommand{\modu}{\ensuremath{\mathrm{\;mod\;}}}
\newcommand{\ord}{\ensuremath{\mathrm{ord}}}
\newcommand{\pitilde}{\ensuremath{\widetilde{\pi}}}
\newcommand{\Qbar}{\ensuremath{\overline{Q}}}
\newcommand{\rig}{\ensuremath{\mathrm{rig}}}
\newcommand{\ring}{\ensuremath{\mathrm{ring}}}
\newcommand{\rec}{\ensuremath{\mathrm{rec}}}
\newcommand{\red}{\ensuremath{\mathrm{red}}}
\newcommand{\reg}{\ensuremath{\mathrm{reg}}}
\newcommand{\rep}{\ensuremath{\mathrm{rep}}}
\newcommand{\res}{\ensuremath{\mathrm{res}}}
\newcommand{\Spec}{\ensuremath{\mathrm{Spec}}}
\newcommand{\Sp}{\ensuremath{\mathrm{Sp}}}
\newcommand{\st}{\ensuremath{\mathrm{st}}}
\newcommand{\tr}{\ensuremath{\mathrm{tr}}}
\newcommand{\ur}{\ensuremath{\mathrm{ur}}}
\newcommand{\val}{\ensuremath{\mathrm{val}}}
\newcommand{\WBC}{\ensuremath{\mathrm{WBC}}}
\newcommand{\WD}{\ensuremath{\mathrm{WD}}}
\newcommand{\cf}{{\it cf.}}
\newcommand{\eg}{{\it eg}}
\newcommand{\ie}{{\it i.e.}}
\newcommand{\loccit}{{\it loc.\,cit}}
\newcommand{\mo}{{-1}}
\newcommand{\hra}{\hookrightarrow}
\newcommand{\hla}{\hookleftarrow}
\newcommand{\xra}{\xrightarrow}
\newcommand{\bbA}{\ensuremath{\mathbb{A}}}
\newcommand{\bbC}{\ensuremath{\mathbb{C}}}
\newcommand{\bbG}{\ensuremath{\mathbb{G}}}
\newcommand{\bbQ}{\ensuremath{\mathbb{Q}}}
\newcommand{\bbQbar}{\ensuremath{\overline\bbQ}}
\newcommand{\bbQp}{\ensuremath{{\mathbb{Q}_p}}}
\newcommand{\bbQpbar}{{\ensuremath{\overline{\mathbb{Q}}_p}}}
\newcommand{\bbQl}{\ensuremath{\mathbb{Q}_\ell}}
\newcommand{\bbQlbar}{\ensuremath{\overline\bbQ_\ell}}
\newcommand{\bbT}{\ensuremath{\mathbb{T}}}
\newcommand{\bbZ}{\ensuremath{\mathbb{Z}}}
\newcommand{\bbZp}{\ensuremath{{\mathbb{Z}_p}}}
\newcommand{\calD}{\ensuremath{\mathcal{D}}}
\newcommand{\calH}{\ensuremath{\mathcal{H}}}
\newcommand{\calK}{\ensuremath{\mathcal K}}
\newcommand{\calKbar}{\ensuremath{{\overline{\mathcal{ K}} }  }}
\newcommand{\calL}{\ensuremath{\mathcal L}}
\newcommand{\calLbar}{\ensuremath{{\overline{\mathcal{ L}} }  }}
\newcommand{\calO}{\ensuremath{\mathcal{O}}}
\newcommand{\calQ}{\ensuremath{\mathcal{Q}}}
\newcommand{\calQbar}{\ensuremath{{\overline{\mathcal{ Q}} }  }}
\newcommand{\calR}{\ensuremath{\mathcal{R}}}
\newcommand{\calT}{\ensuremath{\mathcal{T}}}
\newcommand{\calV}{\ensuremath{\mathcal{V}}}
\newcommand{\calW}{\ensuremath{\mathcal {W}}}
\newcommand{\calZ}{\ensuremath{\mathcal {Z}}}
\newcommand{\fraka}{\ensuremath{\mathfrak{a}}}
\newcommand{\frakl}{\ensuremath{\mathfrak{l}}}
\newcommand{\frakm}{\ensuremath{\mathfrak{m}}}
\newcommand{\frakp}{\ensuremath{\mathfrak{p}}}
\newcommand{\scrD}{\ensuremath{\mathscr{D}}}
\newcommand{\scrL}{\ensuremath{\mathscr{L}}}
\newcommand{\scrLbar}{\ensuremath{{\overline{\mathscr{ L}} }  }}
\newcommand{\scrO}{\ensuremath{\mathscr{O}}}
\begin{document}
\title{Purity for families of Galois representations}

\author{Jyoti Prakash Saha}

\address{D\'epartement de Math\'ematiques, B\^atiment 425, Facult\'e des Sciences d'Orsay, Universit\'e Paris-Sud 11, 91405 Orsay Cedex, France}

\email{jyoti-prakash.saha@math.u-psud.fr}

\subjclass[2010]{11F41, 11F55, 11F80}

\keywords{$p$-adic families of automorphic forms, Pure representations, Local Langlands correspondence, Euler factors}


\begin{abstract}
We formulate a notion of purity for $p$-adic big Galois representations and pseudorepresentations of Weil groups of $\ell$-adic number fields for $\ell\neq p$. This is obtained by showing that all powers of the monodromy of any big Galois representation stay ``as large as possible'' under pure specializations. The role of purity for families in the study of the variation of local Euler factors, local automorphic types along irreducible components, the intersection points of irreducible components of $p$-adic families of automorphic Galois representations is illustrated using the examples of Hida families and eigenvarieties. Moreover, using purity for families, we improve a part of the local Langlands correspondence for $\gln_n$ in families formulated by Emerton and Helm. 
\end{abstract}

\maketitle

\tableofcontents
\section{Introduction}
\subsection{Motivation}
Let $r$ be a geometric Galois representation of the absolute Galois group 
of a number field with coefficients in $\bbQpbar$. Then the restriction $r_v$ of $r$ to a decomposition group at any given finite place $v$ not dividing $p$ is potentially unipotent by Grothendieck's monodromy theorem (see \cite[p.\,515--516]{SerreTate}). 
Given a projective smooth variety $X$ over a finite extension $K$ of $\bbQl$, 
the weight-monodromy conjecture (\cite[Conjecture 3.9]{IllusieMonodromieLocale}) 
says that for any prime $p\neq \ell$ and any integer $i\geq 0$, the $\Gal (\overline K/K)$-representation $H^i_\et(X _{\bbQlbar}, \bbQp)$ is pure of weight $i$, 
\ie, the $i$-th shift of the associated monodromy filtration coincides with the associated weight filtration 
(see definition \ref{Defn: pure representations}). 
When $r$ is irreducible, 
the representation $r_v$ is expected to be pure. 
The Galois representations attached to cuspidal automorphic representations 
(which are algebraic in the sense of \cite[Definition 1.8]{ClozelAnnArbor1}) 
by the Langlands correspondence (which is often conjectural) 
provide ample examples of geometric representations. 
The purity of the restrictions of $p$-adic automorphic Galois representations to decomposition groups 
at places outside $p$ is known in many cases due to works of 
Carayol 
\cite{Carayol86RepresentationAssocieHiblModForm}, 
Harris, Taylor 
\cite{HarrisTaylorGeometryCohomologyShimuraVar}, 
Blasius 
\cite{BlasiusRamanujanConj}, 
Taylor, Yoshida 
\cite{TaylorYoshidaCompatibilityLocalGlobalCorr}, 
Shin \cite{ShinGalReprCompactShimuraVar}, 
Caraiani \cite{CaraianiLocalGlobalCompatibility}, 
Scholze \cite{ScholzePerfectoidSpaces}, 
Clozel \cite{ClozelPurityReignsSupreme} {\it et.\,al}.
Following works of 
Hida \cite{HidaGalrepreord, HidaIwasawa, HidaControThmPNearlyOrdinaryCohGroupSLn}, Mazur \cite{MazurDeformingGaloisRepr}, 
Coleman, Mazur \cite{ColemanMazurEigencurve}, 
Chenevier \cite{ChenevierGLn}, 
Bella\"iche, Chenevier \cite{BellaicheChenevierU3}
{\it et.\,al.}, 
automorphic Galois representations are believed to live in $p$-adic families. 
Thus it is desirable to have a notion of purity for families. 
The goal of this article is to provide a formulation of this notion 
and to discuss its applications to $p$-adic families of Galois representations.

\subsection{Purity for families}
\label{SubSec: why called big purity}
The most naive way to formulate purity for big Galois representations would be to relate the 
monodromy filtration with the weight filtration. 
However the Frobenius eigenvalues on a big Galois representation 
are elements of a ring of large Krull dimension and are not algebraic numbers in general, precluding the 
possibility of considering the weight filtration. 
Thus a formulation of purity for big Galois representations is not straightforward. 
On the other hand, it is natural to expect that such a formulation 
should include a compatibility statement at pure specializations.

This formulation is achieved in theorem \ref{Thm: purity big}, which we call {\it purity for 
big Galois representations} because it says that the structures of Frobenius-semisimplifications 
of Weil-Deligne parametrizations of pure specializations of a ($p$-adic) big Galois representation (of the Weil group of an $\ell$-adic number field with $\ell\neq p$) are ``rigid''. 
In other words, it says that given a pure Weil-Deligne representation, its lifts to Weil-Deligne representations over integral domains have the ``same structure''. 

An important example of families of Galois representations comes from eigenvarieties. 
The traces of the Galois representations attached to the arithmetic points of an eigenvariety 
are interpolated by a pseudorepresentation defined over the global sections of the eigenvariety. 
Thus a notion of purity for pseudorepresentations is indispensable for the understanding of 
various local properties of the arithmetic points of eigenvarieties. 
This is provided by theorem \ref{Thm: purity for pseudorepresentations}, which we call 
{\it purity for pseudorepresentations}. 
It says that given an $\scrO$-valued pseudorepresentation $T$ of the 
Weil group of an $\ell$-adic number field 
(where $\scrO$ is a characteristic zero domain over $\bbZp$ with $p \neq \ell$), 
the Frobenius-semisimplification of two Weil-Deligne representations over two domains 
(containing $\scrO$ as a subalgebra) have the ``same structure'' 
if their traces are equal to $T$ and each of them has a pure specialization. 
This is deduced using purity for big Galois representations.

By \cite[Lemma 7.8.11]{BellaicheChenevierAsterisQUE}, 
around each nonempty admissible open affinoid subset $U$, the pseudorepresentation defined over the global sections of an 
eigenvariety lifts to a Galois representation on a finite type module over some integral 
extension of the normalization of $\calO(U)$. But this module is not known to be free over its coefficient ring. This 
forbids us from applying theorem \ref{Thm: purity for pseudorepresentations} to eigenvarieties. 
To circumvent this problem, we prove a general result in theorem \ref{Thm: purity sum} which we explain now. 
Let $T:G_F\to \scrO$ (where $\scrO$ is a characteristic zero domain over $\bbZp$ 
and $F$ is a number field) be a pseudorepresentation 
which is equal to 
$T_1+\cdots+T_n$ where $T_1:G_F\to \scrO, \cdots, T_n:G_F\to \scrO$ 
are traces of some irreducible $G_F$-representations over $\overline Q(\scrO)$ 
whose restrictions to the Weil group of a finite place $w\nmid p$ of $F$ are monodromic (see definition \ref{Defn: Monodromic}). 
Let $\calO, \calO'$ be two domains containing $\scrO$ as a subalgebra. 
Let $\frakp$ (resp. $\frakp'$) be a prime ideal of $\calO$ (resp. $\calO'$) 
such that the residue field $\kappa=\calO_\frakp/\frakp \calO_\frakp$ 
(resp. $\kappa'=\calO'_{\frakp'}/{\frakp'}\calO'_{\frakp'}$) is a finite extension of $\bbQp$ 
and the Henselization $\calO_\frakp^h$ (resp. $\calO_{\frakp'}'^h$) of $\calO_\frakp$ 
(resp. $\calO'_{\frakp'}$) is Hausdorff. 
Suppose $T_1\modu \frakp, \cdots, T_n\modu \frakp$ 
(resp. $T_1\modu \frakp', \cdots, T_n\modu \frakp'$) 
are traces of irreducible 
$G_F$-representations $\rho_1, \cdots, \rho_n$ (resp. $\rho_1', \cdots, \rho_n'$) 
over $\overline \kappa$ (resp. $\overline \kappa'$) and 
the representations 
$\rho_1|_{G_w}, \cdots, \rho_n|_{G_w}$ 
(resp. $\rho_1'|_{G_w}, \cdots, \rho_n'|_{G_w}$) are pure. 
Then using \cite[Th\'eor\`eme 1]{NyssenPseudoRepresentations} and purity for pseudorepresentations, 
we show in theorem \ref{Thm: purity sum} that the structure of $\WD(\oplus_{i=1}^n \rho_i|_{W_w})^\Frss$ and 
$\WD(\oplus_{i=1}^n \rho_i'|_{W_w})^\Frss$ are ``rigid''. 
Thus theorem \ref{Thm: purity sum} 
can be applied to eigenvarieties to prove the ``rigidity'' of the Frobenius-semisimplifications of the 
Weil-Deligne parametrizations of the local Galois representations attached to the arithmetic points 
that lie within the ``irreducibility and purity locus'' 
(see definition \ref{Defn: locus}) of 
(certain pseudorepresentations attached to) pseudorepresentations 
defined over global sections of eigenvarieties. 
Henceforth, by {\it purity for families}, we refer to theorem \ref{Thm: purity big}, 
\ref{Thm: purity for pseudorepresentations}, \ref{Thm: purity sum}. 

\subsection{Statement of purity for big Galois representations}
\label{SubSec: Set up}
In theorem \ref{Thm: introduction: purity big} below, 
we state a special case of theorem \ref{Thm: purity big}. 
We refer to 
\S 
\ref{Sec: Purity for pseudorepresentations} 
for the statements of theorem 
\ref{Thm: purity for pseudorepresentations}, 
\ref{Thm: purity sum}.

Let $p, \ell$ be two distinct primes and $K$ denote a finite 
extension of $\bbQl$. Denote the absolute Galois group of $K$ by $G_K$. 
Let $I_K$ denote the inertia group and $W_K$ denote the Weil group. 
Let $q$ denote the cardinality of the 
residue field $k$ of the ring of integers $\calO_K$ of $K$. 
Fix an element $\phi\in G_K=\Gal(\overline K/K)$ which lifts the geometric Frobenius $\Fr_k\in G_k=\Gal(\overline k/k)$. 
The Frobenius-semisimplification of the Weil-Deligne parametrization 
of a monodromic (see definition \ref{Defn: Monodromic}) 
representation $V$ of $W_K$ is denoted by $\WD(V)^\Frss$. 
We refer to \S 
\ref{SubSec: LLC introduction} and \S \ref{SubSec: notation} for few more notations. 
From now on by a {\it big Galois representation}, 
we mean a monodromic representation $\rho: W_K \to \Aut_\calR (\calT)$ of $W_K$ on 
a free $\calR$-module $\calT$ of finite rank where $\calR$ is a 
domain containing $\bbZp$ as a subalgebra. 
Note that if $\calR$ is a local ring with finite residue field and 
$\rho|_{I_K}$ is continuous, then $\rho$ is monodromic by Grothendieck's monodromy theorem 
(see \cite[p.~515--516]{SerreTate}). 
Also if $\calR$ is an affinoid algebra over $\bbQp$ 
and $\rho|_{I_K}$ is continuous, then $\rho$ is monodromic by 
Grothendieck's monodromy theorem 
(see \cite[Lemma 7.8.14]{BellaicheChenevierAsterisQUE}). 
Denote the $W_K$-representation $\calT \otimes_\calR \Qbar (\calR)$ by $\calV$ 
and let 
$$\WD(\calV)^\Frss \simeq \bigoplus_{i=1}^m \Sp_{t_i}(\chi_i\otimes \rho_i)_{/{\Qbar(\calR)}}$$
be the isomorphism of Weil-Deligne representations 
(as in equation \eqref{Eqn: Decomposition of WD of big V into Sp t r}) 
where 
$m, t_1\leq t_2\leq \cdots \leq t_m$ are positive integers, 
$\chi_1, \cdots, \chi_m$ are $(\calR^\intal)^\times$-valued unramified characters of $W_K$ and 
$\rho_1, \cdots, \rho_m$ are irreducible Frobenius-semisimple representations of $W_K$ over 
$\calR^\intal[1/p]$ with finite image. 
Given a field $E$ and a ring homomorphism $f:\calR\to E$, the $W_K$-representation $\calT\otimes_{\calR, f} E$ is denoted by $V_f$. 
We fix an isomorphism $\iota_p: \bbQpbar \simeq \bbC$ and let $\rec$ denote the reciprocity map as in \S \ref{Sec: Automorphic and Galois types}. 

\begin{mytheorem}[Purity for big Galois representations]
\label{Thm: introduction: purity big}
Let $\lambda: \calR \to \bbQpbar$ be a $\bbZp$-algebra homomorphism 
such that $V_\lambda$ is pure. 
Then the following hold. 
\begin{enumerate}
\item The rank of no power of the monodromy of $\calT_p$ decreases after specializing at $\lambda$. 
\item The Weil-Deligne representations 
$\WD(V_\lambda)^\Frss$ and $\oplus_{i=1}^m \Sp_{t_i}(\lambda^\intal \circ (\chi_i \otimes \rho_i))_{/\bbQpbar}$ 
are isomorphic. 
\item The polynomial $\Eul(\calV, X)^\mo$ has coefficients in $\calR^\inte$ and 
\begin{equation}
\label{Euler Interpolation}
\lambda(\Eul(\calV, X)^\mo) = \Eul(V_\lambda, X)^\mo.
\end{equation}
\item 
If $\xi:\calR \to \bbQpbar$ is a $\bbZp$-algebra homomorphism 
such that $V_\xi$ is pure, 
then the automorphic types of $\rec (\iota_p(\WD(V_\xi)^\Frss))$ and 
$\rec (\iota_p(\WD(V_\lambda)^\Frss))$ are the same. 
\end{enumerate}
Moreover, for any field extension $\calK$ of $\bbQp$ and any $\bbZp$-algebra homomorphism $\mu: \calR \to \calK$ with $\lambda (\ker \mu)=0$, 
the Weil-Deligne representation $\WD(V_\mu\otimes_\calK\calKbar)^\Frss$ is isomorphic to $\oplus_{i=1}^m \Sp_{t_i}(\mu^\intal \circ (\chi_i \otimes \rho_i))_{/\calKbar}$. 
\end{mytheorem}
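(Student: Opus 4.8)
The plan is to reduce everything to the behaviour of a single nilpotent ``big monodromy operator'' of the family under specialization, using purity at $\lambda$ to prevent it from degenerating, and then to read off the Weil--Deligne structure from the classification of Weil--Deligne representations.

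\textbf{The big monodromy operator and its specializations.} Since $\rho$ is monodromic, there is a nilpotent operator $N_\calR$ on $\calT\otimes_\calR\calR^\intal[1/p]$ with $\rho(g)=\exp\bigl(t(g)\,N_\calR\bigr)$ on a suitable open subgroup of $I_K$, where $t$ is the ($p$-adic) tame character. Hence for any characteristic-zero domain $S$ over $\bbZp$ and any homomorphism $f\colon\calR\to S$, the monodromy operator of $\calT\otimes_{\calR,f}S$ is $f^\intal(N_\calR)$, so that $\rk N_f^{\,k}\le\rk N_\calR^{\,k}$ for every $k\ge0$ (specializing a matrix can only drop the ranks of its powers). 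From $\WD(\calV)^{\Frss}\simeq\bigoplus_i\Sp_{t_i}(\chi_i\otimes\rho_i)$ one reads off $\rk N_\calR^{\,k}=\sum_i(\dim\rho_i)\max(t_i-k,0)$, while the underlying Weil representation of $\WD(\calV)$ restricted to $I_K$ factors through the finite quotient $\rho(I_K)/(\text{unipotent part})$ and its $\phi$-eigenvalues are the $\chi_i(\phi)$ times roots of unity times powers of $q$ (the $\rho_i$ have finite image). The passage from $\rho$ to the Weil part of $\WD$ only untwists the unipotent monodromy, so it commutes with $\otimes_{\calR,f}S$; therefore the $\phi$-eigenvalues of $\WD(\calT\otimes_{\calR,f}S)$ are the $f$-images of those of $\WD(\calV)$, and its Weil-semisimplification is the sum of the $t_i$ consecutive $\lvert\cdot\rvert$-twists of $f^\intal(\chi_i\otimes\rho_i)$. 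This already gives the integrality in (3): the coefficients of $\Eul(\calV,X)^\mo=\prod_i\det\bigl(1-X\phi\mid\Sp_{t_i}(\chi_i\otimes\rho_i)^{I_K,\,N}\bigr)$ are symmetric functions of the $\chi_i(\phi)\in(\calR^\intal)^\times$ and of roots of unity, hence lie in $\calR^\inte$.

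\textbf{Parts (1)--(4).} The structural input is the following lemma over an algebraically closed field of characteristic zero: if $(r,N)$ is a Frobenius-semisimple Weil--Deligne representation whose monodromy $N$ has the largest rank-powers among all Weil--Deligne structures on the semisimple Weil representation $r^\ss$ (in particular if $(r,N)$ is pure), then $(r,N)$ is the direct sum of Steinberg blocks $\Sp_s(\tau)$ obtained from $r^\ss$ by sorting its irreducible constituents along $\lvert\cdot\rvert$-orbits into maximal consecutive segments; thus $(r,N)$ is determined by $r^\ss$. On each $\lvert\cdot\rvert$-orbit this is the classification of representations of a linear type-$A$ quiver, whose generic representation is unique. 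Now fix $\lambda$ with $V_\lambda$ pure of weight $w$. By purity the monodromy filtration of $N_\lambda$ is the $\phi$-weight filtration shifted by $w$, and the dimensions of the $\phi$-weight grading are fixed, being determined by the Weil-semisimplification computed above; hence purity forces the Jordan type of $N_\lambda$ to be the associated Lefschetz type and the multiset of $\phi$-weights to be symmetric about $w$. This is possible only if the $t_i$ twists forming the $i$-th segment are centered at $w$: an off-center or mutually staggered arrangement gives a $\phi$-weight multiset that is either non-symmetric---so no pure structure exists at all---or symmetric but with Lefschetz Jordan type exceeding $\sum_i(\dim\rho_i)\max(t_i-k,0)=\rk N_\calR^{\,k}$ for some $k$, contradicting $\rk N_\lambda^{\,k}\le\rk N_\calR^{\,k}$. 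Once every segment is centered at $w$, the Lefschetz type equals the generic type, which is (1); moreover $\bigoplus_i\Sp_{t_i}(\lambda^\intal(\chi_i\otimes\rho_i))$ is then pure with the same semisimple Weil part, so the lemma identifies it with $\WD(V_\lambda)^{\Frss}$, which is (2). Assertion (3)'s interpolation is immediate from (2), and (4) follows since $\rec\circ\iota_p$ sends $\Sp_t(\chi\otimes\rho)$ to a generalized Steinberg whose automorphic type depends only on $t$ and on $\rho$ up to unramified twist and field automorphism, while $\lambda^\intal(\rho_i)$ and $\xi^\intal(\rho_i)$ are Galois-conjugate (finite image, algebraic trace values) and $\lambda^\intal(\chi_i),\xi^\intal(\chi_i)$ are unramified.

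\textbf{The final clause.} Base change along $\calK\hookrightarrow\calKbar$ does not change $\WD(\cdot)^{\Frss}$. Put $\frakp=\ker\mu$; the hypothesis $\lambda(\ker\mu)=0$ says $\frakp\subseteq\ker\lambda$, so $\lambda$ factors through $\calR/\frakp\hookrightarrow\calK$ and, $\bbQpbar$ being algebraically closed, extends to a homomorphism $\calKbar\to\bbQpbar$ exhibiting $V_\lambda$ as a specialization of $V_\mu\otimes_\calK\calKbar$. Hence $\rk N_\calR^{\,k}\ge\rk N_\mu^{\,k}\ge\rk N_\lambda^{\,k}$, and by (1) the outer terms agree, so $\rk N_\mu^{\,k}=\rk N_\calR^{\,k}$ for all $k$: $N_\mu$ has the generic Jordan type. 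Furthermore, if two segments of the Weil-semisimplification of $\WD(V_\mu\otimes\calKbar)^{\Frss}$ lay in a common $\lvert\cdot\rvert$-orbit without being concentric, then applying $\lambda$ (which factors through $\mu$) would yield two non-concentric segments for $V_\lambda$, contradicting the centering of the previous paragraph; so within each orbit the segments are concentric, and therefore the generic Jordan type is exactly the maximal one compatible with this Weil representation. The structure lemma then gives $\WD(V_\mu\otimes_\calK\calKbar)^{\Frss}\simeq\bigoplus_i\Sp_{t_i}(\mu^\intal\circ(\chi_i\otimes\rho_i))_{/\calKbar}$.

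\textbf{Main obstacle.} The heart of the argument is the interplay in Part~(1)--(2) between the semicontinuity bound $\rk N_f^{\,k}\le\rk N_\calR^{\,k}$ and purity: one must show, from the fixed Frobenius-semisimple Weil part alone, that purity can hold only when the Steinberg segments sit symmetrically, and that the Weil--Deligne representation is then rigid. Making this precise requires combining the combinatorics of monodromy filtrations, the classification of type-$A$ quiver representations, and the behaviour of $N_\calR$ under specialization; everything else is formal bookkeeping.
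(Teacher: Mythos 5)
Your overall strategy — rank semicontinuity for $N_\calR$ under specialization plus purity at $\lambda$ forcing a rigid Weil--Deligne shape — is the same idea as the paper's, and the decomposition \eqref{Eqn: Decomposition of WD of big V into Sp t r} via proposition \ref{Prop: rationality in general} plays the identical role. But there are two places where your argument, as written, has genuine gaps rather than just missing routine details.

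\textbf{The combinatorial step in parts (1)--(2).} You invoke a lemma about ``generic'' type-$A$ quiver representations and then assert that any off-center or staggered arrangement of segments forces a Lefschetz Jordan type that either fails symmetry or strictly dominates the generic one, with only the centered configuration surviving. This is the crux, you call it the ``main obstacle,'' and it is not established: you would need to treat the $W_K$-isotypic structure per $|\Art_K^\mo|_K$-orbit, compare Lefschetz types with the generic type under dominance order, and handle ties $t_{m'}=t_{m'+1}$ and the case of segments lying in distinct orbits (where centering must come from $W_K$-equivariance and per-segment purity, not from the Lefschetz combinatorics). The paper circumvents this entire issue by a direct downward induction: purity plus the annihilation $N_\lambda^{t_m}=0$ (condition (C)) bounds the span of the weight multiset $S$ by $2(t_m-1)$; the top and bottom of the $r_m$-segment attain this span, so they are forced to sit at $w\pm(t_m-1)$, and $\Sp_{t_m}(\lambda^\intal\circ r_m)$ splits off; then a rank count on $N^{t_{m'}}$ makes the complementary summand $W$ satisfy the same three conditions (A'), (B'), (C') with $D'=t_{m'}$, and one iterates. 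This is the rigorous realization of the idea you sketch, and your proposal does not supply an equivalent.

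\textbf{The final clause.} You write that since $\bbQpbar$ is algebraically closed, $\lambda$ extends along $\calR/\frakp\hra\calK\hra\calKbar$ to a homomorphism $\calKbar\to\bbQpbar$. This is false: $\calK$ may be a transcendental extension of $\bbQp$ (the theorem allows an arbitrary field extension $\calK$), and there is no ring homomorphism from such a field into $\bbQpbar$. Consequently $V_\lambda$ is \emph{not} a specialization of $V_\mu\otimes_\calK\calKbar$, and your ``applying $\lambda$'' step fails. The correct route, which is what the paper does, is to observe that $\mu(\calR)\subset\calK$ is a quotient of $\calR$, apply proposition \ref{Prop: rationality in general} to get a decomposition $\WD(V_\mu)^\Frss\simeq\oplus_{i}\Sp_{t_i'}(s_i)$ with $s_i$ valued in $\mu(\calR)^\intal[1/p]$, push that decomposition forward along the map $\eta:\mu(\calR)\to\lambda(\calR)$ (not along any nonexistent $\calKbar\to\bbQpbar$), identify it with the known decomposition of $\WD(V_\lambda)^\Frss$ from part (1), and then match segments by a weight comparison. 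Your sandwich $\rk N_\calR^k\ge\rk N_\mu^k\ge\rk N_\lambda^k$ does survive (it only uses $\eta^\intal$), but the ``concentric segments'' step you build on top of it needs to be rerouted through $\mu(\calR)^\intal$ exactly as just described.

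Parts (3) and (4) as you deduce them from (2) are fine, and your observation that the integrality of $\Eul(\calV,X)^\mo$ follows from $\chi_i(\phi)\in(\calR^\intal)^\times$ and the finiteness of $\rho_i(W_K)$ matches the paper.
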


Note that when $\scrD:=\{\ker \lambda~|~\lambda \in \Hom_{\bbZp\text{-}\alg}(\calR, \bbQpbar) ,V_\lambda \text{ is pure} \}$ is dense in 
$\Spec(\calR)$, 
using Hilbert's nullstellensatz, 
some of the above results (for example, equation \eqref{Euler Interpolation}) can 
be proved for $\lambda= \iota_\frakp \circ \textrm{mod}~ \frakp$ for $\frakp$ varying in a dense subset of $\scrD$ 
(here $\iota_\frakp$ denotes a $\bbZp$-algebra injection from $\calR/\frakp$ to $\bbQpbar$).

\subsection{Applications}
Theorem \ref{Thm: purity big}, \ref{Thm: purity for pseudorepresentations}, \ref{Thm: purity sum} turn out to be useful in the study of some arithmetic 
aspects of $p$-adic families of Galois representations. 
For example, 
the local Langlands correspondence for $\gln_n$ in families, 
the local automorphic types of arithmetic points of $p$-adic families, 
the geometry of the underlying spaces of families etc. 
These are studied in theorem \ref{Thm: LLC families}, 
\ref{Thm: Application: Hida cusp}, 
\ref{Thm: Application: Hida unitary}, 
\ref{Thm: Application: Eigen}. 
In this section, we state a special case of 
theorem \ref{Thm: LLC families} 
and explain the content of theorem 
\ref{Thm: Application: Hida cusp}, 
\ref{Thm: Application: Hida unitary}, 
\ref{Thm: Application: Eigen}.

\subsubsection{Local Langlands correspondence for $\gln_n$ in families}
\label{SubSec: LLC introduction}
The local Langlands correspondence, proved by 
Harris, Taylor 
\cite{HarrisTaylorGeometryCohomologyShimuraVar}, 
asserts that there is a canonical bijection between the isomorphism classes of $n$-dimensional Frobenius-semisimple complex 
Weil-Deligne representation of $W_K$ and the isomorphism classes of irreducible admissible representations of 
$\gln_n(K)$. This is extended to $p$-adic families of representations of $G_K$ by Emerton and Helm in \cite{EmertonHelmLLC}. 
We state a special case of it. 

First, we fix some notations. 
Given a monodromic representation $\rho:G_K \to \gln_n(L)$ over a field $L$ of characteristic zero, 
the representation attached to $\WD(\rho)^\Frss$ by the extension (\cite[\S 4.2]{EmertonHelmLLC}) of the modified local Langlands correspondence of 
Breuil and Schneider (see \cite[p.~161--164]{BreuilSchneiderFirstStep}) is denoted by $\pi(\rho)$ 
and the smooth contragredient of $\pi(\rho)$ is denoted by $\widetilde \pi(\rho)$ 
(the representations $\pi(\rho), \widetilde \pi(\rho)$ are equal to 
$\pi(\WD(\rho)^\Frss), \widetilde \pi(\WD(\rho)^\Frss)$ respectively in the notation of \cite{EmertonHelmLLC}). 
Let $A$ be a complete reduced $p$-torsion free Noetherian local ring with finite residue field of characteristic $p$. 
The residue field of a prime ideal $\frakp$ of $A$ is denoted by $\kappa(\frakp)$. 

Given a continuous Galois representation $r: G_K \to \gln_n(A)$, there exists at most one admissible smooth 
$\gln_n(K)$-representation $V$ over $A$, up to isomorphism, satisfying some conditions (conditions (1), (2), (3) of 
\cite[Theorem 1.2.1]{EmertonHelmLLC}). 
Suppose such a $V$ exists. 
Let $\calD$ denote the set of primes $\frakp$ of $A[1/p]$ such that the number of irreducible components of $\Spec A[1/p]$ 
passing through $\frakp$ is one. 
By \cite[Theorem 6.2.5]{EmertonHelmLLC}, for any $\frakp \in \calD$, there is a $\gln_n(K)$-equivariant surjection 
\begin{equation}
\label{Eqn: Surjection Introduction}
\pitilde (\kappa(\frakp) \otimes_A r) \to \kappa(\frakp) \otimes_AV.
\end{equation}
Let $\calD'$ denote the set of primes $\frakp$ in $\calD$ for which the above map is an isomorphism. 
Then $\calD'$ contains an open dense subset $U'$ of $\Spec A[1/p]$ by \cite[Theorem 1.2.1]{EmertonHelmLLC}. 
By theorem \ref{Thm: intro: LLC} below, 
$\calD'$ contains all the elements of $\calD$ 
that are contained in kernel of pure specializations. 
For a more general result, we refer to 
theorem \ref{Thm: LLC families} which is proved using 
\cite[Theorem 6.2.1, 6.2.5, 6.2.6]{EmertonHelmLLC} and theorem \ref{Thm: purity big}. 

\begin{mytheorem}
\label{Thm: intro: LLC}
Let $V, \calD, \calD'$ be as above. Suppose $V$ exists. 
Let $\frakp$ be a prime in $\calD$. Suppose that there exists a $\bbZp$-algebra homomorphism 
$i_\frakp : A/\frakp \to \bbQpbar$ such that 
$r\otimes_A A/\frakp \otimes _{A/\frakp, i_\frakp}\bbQpbar$ is pure. Then $\frakp$ lies in $\calD'$. 
\end{mytheorem}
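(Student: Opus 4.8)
The strategy is to reduce the isomorphism statement over $\kappa(\frakp)$ to an equality of Weil–Deligne parameters that is furnished by purity for big Galois representations (Theorem \ref{Thm: introduction: purity big}), and then to invoke the comparison criterion of Emerton–Helm. First I would localize: let $\frakq \subseteq \frakp$ be the unique minimal prime of $A[1/p]$ contained in $\frakp$ (unique because $\frakp \in \calD$), write $\calR = (A/\frakq)[1/p]$, which is a characteristic-zero domain over $\bbZp$ containing $\bbZp$ as a subalgebra, and set $\calT = \calT_p := \calR^n$ with the $G_K$-action induced by $r$. Since $A$ is a complete Noetherian local ring with finite residue field and $r$ is continuous, $\rho := r|_{W_K}$ valued in $\Aut_\calR(\calT)$ is monodromic by Grothendieck's monodromy theorem (as recalled in the excerpt), so $(\calR,\calT,\rho)$ is a big Galois representation in the sense of \S\ref{SubSec: Set up}, and $\calV := \calT\otimes_\calR \Qbar(\calR)$ has a Frobenius-semisimple Weil–Deligne parametrization $\WD(\calV)^\Frss \simeq \bigoplus_{i=1}^m \Sp_{t_i}(\chi_i\otimes\rho_i)$ over $\Qbar(\calR)$.

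Next I would set up the two relevant specializations. The composite $\lambda : \calR \to A/\frakq \otimes_{?} \cdots \to \kappa(\frakp) \hookrightarrow \bbQpbar$, more precisely the $\bbZp$-algebra map $\calR \to (A/\frakp) \otimes_{A/\frakp, i_\frakp} \bbQpbar = \bbQpbar$ induced by $r \bmod \frakp$ and $i_\frakp$, satisfies $V_\lambda \simeq r\otimes_A A/\frakp \otimes_{A/\frakp, i_\frakp}\bbQpbar$, which is pure by hypothesis. Applying Theorem \ref{Thm: introduction: purity big}(2) gives
\[
\WD(V_\lambda)^\Frss \;\simeq\; \bigoplus_{i=1}^m \Sp_{t_i}\bigl(\lambda^\intal\circ(\chi_i\otimes\rho_i)\bigr)_{/\bbQpbar}.
\]
I would then compare this with the generic point: choosing any $\bbZp$-algebra embedding $\mu : \calR \hookrightarrow \Qbar(\calR)$-followed-by-an-embedding into a large characteristic-zero field $\calK \supseteq \bbQp$ with $\ker\mu = 0 \subseteq \ker\lambda$, the "Moreover" clause of Theorem \ref{Thm: introduction: purity big} identifies $\WD(V_\mu\otimes_\calK\calKbar)^\Frss$ with $\bigoplus_i \Sp_{t_i}(\mu^\intal\circ(\chi_i\otimes\rho_i))$. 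The upshot is that $\WD(V_\lambda)^\Frss$ is the "expected" specialization of the generic Weil–Deligne representation — no monodromy rank drops, by part (1) — which is exactly the input needed to compare $\pitilde(\kappa(\frakp)\otimes_A r)$ with $\kappa(\frakp)\otimes_A V$.

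Finally I would feed this into the Emerton–Helm machinery. By construction the map \eqref{Eqn: Surjection Introduction} is a surjection $\pitilde(\kappa(\frakp)\otimes_A r) \twoheadrightarrow \kappa(\frakp)\otimes_A V$ for every $\frakp \in \calD$; membership of $\frakp$ in $\calD'$ is the assertion that it is injective. By \cite[Theorem 6.2.5, 6.2.6]{EmertonHelmLLC} this injectivity is controlled by whether the Weil–Deligne representation $\WD(\kappa(\frakp)\otimes_A r)^\Frss$ agrees with the specialization at $\frakp$ of the Weil–Deligne parameter over the normalization of $A/\frakq$ — equivalently, whether the monodromy operator does not degenerate under the specialization — and this is precisely what parts (1) and (2) above provide. (Concretely, $\pi(\rho)$ depends on $\WD(\rho)^\Frss$, and the failure of \eqref{Eqn: Surjection Introduction} to be an isomorphism is measured by a jump in the Breuil–Schneider recipe caused by a drop in monodromy rank; purity forbids that drop.) Assembling these, $\frakp \in \calD'$. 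The main obstacle I anticipate is bookkeeping rather than conceptual: matching the normalization, integral-closure, and $\intal$-decorations in the Emerton–Helm setup (their interpolation takes place over a finite extension of the normalization of $A$, while Theorem \ref{Thm: introduction: purity big} is phrased over the domain $\calR$ and its integral closure $\calR^\intal$) so that the two notions of "specialization of the Weil–Deligne parameter at $\frakp$" are literally identified; once that translation is in place, the purity input closes the argument.
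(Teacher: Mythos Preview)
Your proposal is correct and follows essentially the same route as the paper: pass to the unique minimal prime contained in $\frakp$, apply purity for big Galois representations (part (1), non-degeneration of monodromy ranks under the specialization) over the resulting domain, and then invoke the Emerton--Helm criterion \cite[Theorem 6.2.5, 6.2.6]{EmertonHelmLLC} (recorded here as Theorem \ref{Thm: EH 626}) which says precisely that constancy of the ranks of $N^j$ forces the surjection $\gamma_\frakp$ to be an isomorphism. Your detour through the ``Moreover'' clause and the comparison of full Weil--Deligne parameters is harmless but unnecessary, and the bookkeeping worry at the end is overstated: the paper works directly with $A/\fraka$ (a local domain with finite residue field, so Grothendieck's monodromy theorem applies immediately) and only needs the rank statement, so no matching of integral closures is required.
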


Hida's theory 
of ordinary automorphic representations provide continuous representations of 
absolute Galois group of number fields with coefficients in rings of the form 
$A$. 
So their restriction to decomposition groups at places not dividing $p$ gives 
representations of the form $r$, to which \cite[Theorem 6.2.1, 6.2.5, 6.2.6]{EmertonHelmLLC} and 
theorem \ref{Thm: LLC families} apply. 
On the other hand, overconvergent forms also form families, although of rather different 
nature, for instance, there are examples of such families whose coefficient rings are not local 
(and there are also families of overconvergent forms defined over local rings, see \cite{AISOverconv}). 
The local Langlands correspondence is not yet extended to families defined over non-local rings or to the case when $A$ is 
an affinoid algebra. 
However, the coefficient rings $\calR, \scrO, \calO, \calO'$ as in 
theorem \ref{Thm: purity big}, 
\ref{Thm: purity for pseudorepresentations}, 
\ref{Thm: purity sum} 
are quite general, for instance, $\calR, \scrO$ are not assumed to be local or Noetherian. 
So once a notion of local Langlands correspondence for more general families 
is established, it is likely that one could use theorem \ref{Thm: purity big} 
\ref{Thm: purity for pseudorepresentations}, \ref{Thm: purity sum} 
to show that the extension (as in \cite[\S 4.2]{EmertonHelmLLC}) of the Breuil-Schneider modified local Langlands correspondence is 
interpolated at all the primes contained in the kernel of pure specializations.

\subsubsection{Hida families and eigenvarieties}

Given a $p$-adic family of Galois representations of the absolute Galois group 
of a number field, the variation of the Frobenius-semisimplifications 
of the Weil-Deligne parametrizations of the local Galois representations 
attached to the members 
at places outside $p$ can be studied using theorem 
\ref{Thm: purity big}, \ref{Thm: purity for pseudorepresentations}, 
\ref{Thm: purity sum}. 
Thus purity for families illustrates the 
variation of local Euler factors of the 
arithmetic points of $p$-adic families of automorphic Galois representations 
and also the variation of local automorphic types of arithmetic points when local-global compatibility is known. 
In \S \ref{Sec: Constancy of automorphic types}, 
we explain this variation using the examples of Hida family of cusp forms, 
Hida family of ordinary automorphic representations of definite unitary groups, 
eigenvariety for definite unitary groups. 
We refer to theorem 
\ref{Thm: Application: Hida cusp}, 
\ref{Thm: Application: Hida unitary}, 
\ref{Thm: Application: Eigen} 
for the precise statements. 
Roughly speaking, these three results state that the ``Galois types'' of the local Galois representations 
attached to the arithmetic points of any given irreducible component of these families are constant (under some hypotheses). 
In the proofs of theorem 
\ref{Thm: Application: Hida cusp}, 
\ref{Thm: Application: Hida unitary}, 
\ref{Thm: Application: Eigen}, 
we do not use the fact that the arithmetic points of these families form a dense subset. 
Moreover in theorem \ref{Thm: Application: Hida cusp}, 
we do not assume that the residual representation attached to (a branch of) the Hida 
family of ordinary cusp forms is residually absolutely irreducible. 
However in theorem \ref{Thm: Application: Hida unitary}, 
we only consider those branches of the Hida family (of ordinary 
automorphic representations of a definite unitary group) 
whose associated minimal primes are contained in non-Eisenstein maximal ideals. 
In theorem \ref{Thm: Application: Eigen}, 
we assume that each irreducible component of the eigenvariety attached to 
the definite unitary group $U(m)$ contains at least one arithmetic point such that 
its associated automorphic representation $\pi$ is regular at infinity, 
the semisimple conjugacy class of $\pi_p$ has $m$ distinct eigenvalues and 
the weak base change of $\pi$ is cuspidal. 
We assume further that the Galois representation attached to an 
automorphic representation $\pi$ of $U(m)$ of regular weight at infinity 
is irreducible if the weak base change of $\pi$ is cuspidal. 
For related results, we refer to 
\cite[\S 12.7.14]{NekovarSelmerComplexes}, 
\cite[\S 7.5.3, 7.8.4]{BellaicheChenevierAsterisQUE}, 
\cite[Theorem A]{PaulinLocalGlobal}.

\subsection{Organization}
\label{SubSec: Organization}

The main results obtained in this article are theorem 
\ref{Thm: purity big}, 
\ref{Thm: purity for pseudorepresentations}, 
\ref{Thm: purity sum}, 
\ref{Thm: LLC families}, 
\ref{Thm: Application: Hida cusp}, 
\ref{Thm: Application: Hida unitary}, 
\ref{Thm: Application: Eigen}.

In \S \ref{Sec: local Galois repr l}, 
we introduce the notion of 
Weil-Deligne representations over domains following 
\cite[8.4--8.6]{DeligneConstantesDesEquationsFunctional}, 
\cite[p.\,77--78]{TaylorGaloisReprExtended}. 
Then we recall the notion of pure representations and Euler factors. 
We begin section \ref{Sec: Automorphic and Galois types} by 
fixing a unitary local Langlands correspondence. 
Then we introduce the modified local Langlands correspondence 
of Breuil-Schneider (\cite[p.~161--164]{BreuilSchneiderFirstStep}) 
and the extension of this modification due to Emerton and Helm 
(\cite[\S 4.2]{EmertonHelmLLC}). 
In \S \ref{SubSec: LLC families notations}, we recall the 
formulation of the local Langlands correspondence for $\gln_n$ in $p$-adic families by Emerton and Helm. 
The notion of automorphic type is defined in \S \ref{SubSec: Automorphic Galois type}.

In the next section, we prove theorem \ref{Thm: purity big}. 
In its proof, we crucially use 
(through equation \eqref{Eqn: Decomposition of WD of big V into Sp t r} for instance) the hypothesis 
that the ring $\calR$ is a domain. We cannot expect to prove theorem 
\ref{Thm: purity big} when 
the ring $\calR$ is replaced by a more general ring, an example being a ring 
with finitely many minimal primes. 
In fact a crucial step in its proof 
is to express the trace of $\calV$ as a sum of traces of irreducible Frobenius-semisimple representations 
over $\calR^\intal$ and then to pin down the factors of powers of the character $|\Art_K^\mo|_K$ in them. 
The amount of these factors is governed by the size of the Jordan blocks of the monodromy 
of $\calV$. 
When the coefficient ring $\calR$ of $\calT$ is not 
a domain, then the shapes of the Jordan blocks of the images of its monodromy 
in the stalks of $\Spec(\calR)$ at the generic points need not be independent 
of the generic points. Thereby, in no reasonable manner, it is possible to pin down the factors of 
powers of $|\Art_K^\mo|_K$ present in the representations stated above. Even in the 
very simple case where $\calR= \bbQp[[X]]\times \bbQp[[X]] \times \bbQp[[X]]$, $\calV$ is semistable 
and $N\in M_3(\calR)$ is the strictly upper triangular matrix with 
$N_{12} = (X, 0, 0), N_{13}= 0, N_{23} = (0, X-1, 0)$, 
we cannot track the `right' factors of powers of $q$ in the 
characteristic roots of $\phi$ on $\calV$. Thus it seems hard to have a reasonable analogue of 
equation \eqref{Eqn: Decomposition of WD of big V into Sp t r} that could 
lead to an analogue of theorem \ref{Thm: purity big} when $\calR$ is a more general 
ring than a domain. So we are compelled to assume that $\calR$ is a domain.

In \S \ref{Sec: Purity for pseudorepresentations}, we use purity for big Galois representations 
to prove purity for pseudorepresentations (theorem \ref{Thm: purity for pseudorepresentations}) 
by an induction argument. 
Theorem \ref{Thm: purity sum} follows as a corollary of theorem \ref{Thm: purity for pseudorepresentations}. 
Section \ref{Sec: LLC families} 
uses results from \cite{EmertonHelmLLC} and theorem \ref{Thm: purity big} 
to prove theorem \ref{Thm: LLC families} about local Langlands correspondence 
for $\gln_n$ in families. 
In section \ref{Sec: Constancy of automorphic types}, 
using the examples of Hida family of ordinary cusp forms, 
Hida family of ordinary automorphic representations of definite unitary groups 
and eigenvarieties, we illustrate the role of theorem 
\ref{Thm: purity big}, 
\ref{Thm: purity for pseudorepresentations}, 
\ref{Thm: purity sum}
in the study of the local data (\eg. local Euler factors, local automorphic types, 
Weil-Deligne representations) associated to the members of 
$p$-adic families of automorphic Galois representations 
(theorem \ref{Thm: Application: Hida cusp}, 
\ref{Thm: Application: Hida unitary}, 
\ref{Thm: Application: Eigen}).

\addtocontents{toc}{\protect\setcounter{tocdepth}{-1}}

\subsection{Notations}
\label{SubSec: notation}
For every field $F$, we fix an algebraic closure $\overline F$ of it. For any finite place $v$ of a number field $E$, the decomposition group 
$\Gal(\overline E_v/E_v)$ is denoted by $G_v$. Let $W_v\subset G_v$ (resp. $I_v\subset G_v$) denote the Weil group (resp. inertia group) 
and $\Fr_v\in G_v/I_v$ denote the geometric Frobenius element. 
We fix embeddings $\bbC \mathop{\hla}\limits^{i_\infty} \bbQbar \mathop{\hra}\limits^{i_p} \bbQpbar$ once and for all. 
The largest reduced quotient of a ring $A$ is denoted by $A_\red$ and 
the map $A_\red \to B_\red$ induced by a ring homomorphism $f:A\to B$ is denoted by 
$f_\red$. 
The fraction field of a domain $A$ is denoted by $Q(A)$ and the field 
$\overline{Q(A)}$ is denoted by $\overline Q (A)$. 
If $R$ is a ring with a unique minimal prime ideal, then 
the integral closure of $R_\red$ in $Q(R_\red)$ (resp. $\Qbar (R_\red)$) is denoted by $R^\inte$ (resp. $R^\intal$). 
If $f:R\to S$ is a ring homomorphism where $S$ is a ring with a unique minimal prime ideal, 
then the map $f_\red$ has an extension to a map $R^\intal\to S^\intal$. We fix one such map 
and denote it by $f^\intal$. 
If an integer $m$ is nonzero in $S^\red$, then the unique extension 
$R^\intal[1/m] \to S^\intal[1/m]$ of $f^\intal$ is also denoted by $f^\intal$. 
By a representation of a group $G$ on a module $M$ over a ring $A$, 
we mean a group homomorphism $G\to \Aut_A(M)$ (even if $G$ is 
a topological group) unless otherwise 
stated.

\addtocontents{toc}{\protect\setcounter{tocdepth}{3}}

\section{Local Galois representations}\label{Sec: local Galois repr l} 

Let $\varpi$ 
denote a uniformizer of $\calO_K$ and $\val_K:K^\times\twoheadrightarrow \bbZ$ 
be the $\varpi$-adic valuation. Let $\mid\cdot\mid_K:=(\# 
k)^{-\val_K(\,\cdot\,)}$ be the corresponding norm. 
The Weil 
group $W_K$ is defined as the subgroup of $G_K$ consisting of elements which 
map to an integral power of $\Fr_k$ in $G_k$. 
The Artin map $\Art_K:K^\times \xra{\sim}W_K^\ab$ 
is normalized so that the uniformizing parameters go to the lifts of the geometric Frobenius 
element. Let $P_K\subset I_K$ 
denote the wild inertia subgroup.
Then given a compatible system $\zeta=(\zeta_n)_{\ell\nmid n}$ of primitive 
roots of unity, we have an isomorphism 
$t_\zeta:I_K/P_K\xra{\sim} \prod_{p\neq \ell}\bbZp$
such that $\sigma(\varpi^{1/n})=\zeta_n^{(t_\zeta(\sigma)\modu n)}\varpi^{1/n}$ for all $\sigma\in I_K/P_K$. 
By \cite[Theorem 7.5.2]{NSW}, 
for all $\sigma\in W_K$ and $\tau\in I_K$, 
we have 
$t_\zeta(\sigma\tau\sigma^\mo)=\varepsilon(\sigma)t_\zeta(\tau)$ 
where 
$\varepsilon:=\prod_{p\neq \ell}\varepsilon_p:G_K\to \prod_{p\neq 
\ell}\bbZ_p^\times$ 
is the product of the cyclotomic characters. For a prime $p\neq \ell$, let 
$t_{\zeta,p}:I_K \to \bbZp$ denote the composition of the projection $I_K \to I_K/P_K$, 
the map $t_\zeta$ and the 
projection from $\prod_{p\neq \ell} \bbZp$ to $\bbZp$. Define $v_K:W_K\to \bbZ$ by 
$\sigma|_{K^\ur}=\Fr_k^{v_K(\sigma)}$ for all $\sigma\in W_K$.

\begin{mydefinition}[{\cite[8.4.1]{DeligneConstantesDesEquationsFunctional}, 
\cite[p.\,77--78]{TaylorGaloisReprExtended}}]
\label{Defn: Weil-Deligne representations}
Let $A$ be a commutative domain of characteristic zero. 
\begin{enumerate}
\item A \textnormal{Weil-Deligne representation} of $W_K$ on a free $A$-module 
$M$ of finite rank is a triple $(r,M,N)$ consisting of a representation 
$r:W_K\to \Aut_A (M)$ and a nilpotent endomorphism $N\in \End_A(M)$ such that 
$r(I_K)$ is finite and for all 
$\sigma\in W_K$, 
$$r(\sigma)N r(\sigma)^{-1}=q^{-v_K(\sigma)}N $$
in $\End_{A[1/\ell]}(M\otimes_A A[1/\ell])$. 
The operator $N$ is called the 
\textnormal{monodromy} of $(r, M, N)$. 
\item 
A representation $\rho$ of $W_K$ on a free module $M$ of finite rank over a domain $A$ is said to be 
\textnormal{irreducible Frobenius-semisimple} 
if $M\otimes \Qbar(A)$ is irreducible, the action of $\phi$ on $M\otimes \Qbar(A)$ is semisimple and 
$\#\rho(I_K)<\infty$. 
\end{enumerate}
\end{mydefinition}

The sum of Weil-Deligne representations are defined in the 
usual way (see \cite[\S 31.2]{BushnellHenniart} for instance).

\begin{mydefinition}\label{Defn: Monodromic}
Let $A$ be a $\bbZp$-algebra of characteristic zero. Suppose $M$ 
be an $A$-module together with a $W_K$-action $\rho:W_K\to \Aut_A(M)$ on it. 
We say $M$ is \textnormal{monodromic} with \textnormal{monodromy} $N$ 
\textnormal{over} $K'$ if there exists a finite extension $K'/K$ and 
$N$ is a nilpotent element of $\End_{A[1/p]}(M\otimes_AA[1/p])$ such that for all 
$\tau\in I_{K'}$
$$\rho(\tau)=\exp(t_{\zeta, p}(\tau)N)$$
in $\End_{A[1/p]}(M\otimes_AA[1/p]).$
An $A$-module $M'$ equipped with an action of $G_K$ is said to be \textnormal{monodromic} 
if $M'|_{W_K}$ is monodromic. 
\end{mydefinition}

Suppose $(r,N)=(r,V,N)$ is a 
Weil-Deligne representation with coefficients in a field $L$ of characteristic 
zero which contains all the characteristic roots of all the elements of 
$r(W_K)$. Let $r(\phi)=r(\phi)^{ss}u=ur(\phi)^{ss}$ 
be the Jordan decomposition of $r(\phi)$ as the product of a diagonalizable 
matrix $r(\phi)^{ss}$ and a unipotent matrix $u$. Following 
\cite[8.5]{DeligneConstantesDesEquationsFunctional}, 
\cite[p.\,78]{TaylorGaloisReprExtended}, define 
$\tilde r(\sigma)=r(\sigma) u^{-v_K(\sigma)}$ 
for all $\sigma\in W_K$.
Then $(\tilde r, V, N)$ is a Weil-Deligne representation 
(by 
\cite[8.5]{DeligneConstantesDesEquationsFunctional} 
for example) 
and is called the {\it Frobenius semisimplification} of $(r,V, N)$ 
(\cf\,\cite[8.6]{DeligneConstantesDesEquationsFunctional}). 
It is denoted by $V^\Frss$. We say $(r, V, N)$ is {\it Frobenius-semisimple} if $\tilde 
r=r$.

\begin{mydefinition}
For an integer $t\geq 1$, a characteristic zero commutative domain $A$ with $\ell\in A^\times$ 
and a representation $(r, M)$ of $W_K$ on a free module $M$ of finite rank over $A$ with $\#r(I_K)<\infty$, 
we denote by $\Sp_t(r)_{/A}$ the Weil-Deligne representation with underlying 
module $M^t$ on which $W_K$ acts via 
$$r|\Art_K^\mo|_K ^{t-1}\oplus r|\Art_K^\mo|_K ^{t-2} \oplus \cdots \oplus 
r|\Art_K^\mo|_K \oplus r$$
and the monodromy $N$ induces an isomorphism from $r|\Art_K^\mo|_K^{i}$ to 
$r|\Art_K^\mo|_K^{i+1}$ for all $0\leq i\leq t-2$ and is zero on 
$r|\Art_K^\mo|_K ^{t-1}$. 
\end{mydefinition}

Let $\Omega$ denote an algebraically closed field of characteristic zero.

\begin{mydefinition}
A Weil-Deligne representation over $\Omega$ is said to be 
\textnormal{indecomposable} if it is not isomorphic to a direct sum of two 
nonzero Weil-Deligne representations over $\Omega$. 
\end{mydefinition}

\begin{mytheorem}\label{Thm: Structure of Frob ss Weil-Deligne representations}
Let $(\rho, V, N)$ be a Frobenius-semisimple Weil-Deligne representation over $\Omega$. 
Then it is isomorphic to 
$$\bigoplus_{i\in I}\Sp_{t_i}(r_i)_{/\Omega}$$
for some irreducible Frobenius-semisimple representations $r_i:W_K\to 
\gln_{n_i}(\Omega)$ and positive integers $t_i$. This decomposition is unique up to reordering and 
replacing factors by isomorphic factors.
\end{mytheorem}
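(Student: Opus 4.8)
The plan is to prove the structure theorem for Frobenius-semisimple Weil–Deligne representations over $\Omega$ by reducing it, via the monodromy operator $N$, to the classification of irreducible $W_K$-representations. First I would record the basic structural fact: since $r(I_K)$ is finite and $\phi$ acts semisimply, the restriction $r|_{W_K}$ (forgetting $N$) is a semisimple representation of $W_K$ over $\Omega$, hence a direct sum of irreducibles; moreover every irreducible $W_K$-representation appearing is, up to unramified twist, obtained from one with finite image (one can absorb the unramified part into a character of $\langle\phi\rangle$). So $V = \bigoplus_j \sigma_j$ as a $W_K$-module with each $\sigma_j$ irreducible Frobenius-semisimple. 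The relation $r(\phi)Nr(\phi)^{-1} = q^{-1}N$ (and $r(\sigma)Nr(\sigma)^{-1}=q^{-v_K(\sigma)}N$ for general $\sigma$) says precisely that $N: V \to V(-1)$ is $W_K$-equivariant, where $(-1)$ denotes the twist by the unramified character $\sigma \mapsto q^{v_K(\sigma)} = |\Art_K^{\mo}|_K^{-1}(\sigma)$.

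Next I would exploit this equivariance. Group the summands $\sigma_j$ into ``$N$-chains'': by Schur's lemma over the algebraically closed field $\Omega$, $N$ restricted to the $\sigma_j$-isotypic part lands in the $\sigma_j(-1)$-isotypic part, and $N$ induces, on the multiplicity spaces, a nilpotent operator that intertwines these. Concretely, fix an irreducible $r$ (with finite image), consider the full sub-Weil–Deligne representation supported on the unramified-twist line $\{r|\Art_K^{\mo}|_K^a : a \in \bbZ\}$, let $W_a = \Hom_{W_K}(r|\Art_K^{\mo}|_K^a, V)$ be the multiplicity spaces, and observe $N$ acts as a nilpotent operator $\calN$ on $\bigoplus_a W_a$ shifting degree by $+1$ (i.e. $\calN: W_a \to W_{a+1}$). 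Putting a graded vector space with a degree-$1$ nilpotent endomorphism into Jordan normal form — which is elementary linear algebra, essentially the Jacobson–Morozov / $\fraksl_2$ decomposition in the graded setting — decomposes $\bigoplus_a W_a$ into standard ``string modules'' $\Omega[t-1] \oplus \Omega[t-2] \oplus \cdots \oplus \Omega[0]$ with $\calN$ the shift. Each such string, tensored back with $r$, is exactly $\Sp_t(r)_{/\Omega}$ by the definition of $\Sp_t$. Summing over all the irreducible twist-lines gives the asserted decomposition $V \simeq \bigoplus_i \Sp_{t_i}(r_i)_{/\Omega}$.

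For uniqueness, I would argue that $\Sp_t(r)_{/\Omega}$ is indecomposable (its $W_K$-socle under $N$, namely $\ker N$, is the single irreducible line $r|\Art_K^{\mo}|_K^{t-1}$, so it cannot split off a direct summand), and then recover the invariants $t_i, r_i$ intrinsically from $(V,N)$: the isomorphism class of $r_i|\Art_K^{\mo}|_K^{a}$ and the multiplicity with which it occurs, together with the ranks of the powers $N^k$ on each isotypic component, determine the sizes of all Jordan blocks of $\calN$ in each twist-line, hence the multiset $\{(t_i, r_i)\}$. This is the usual ``partition from a nilpotent operator is determined by the rank sequence'' statement, applied grading-component by grading-component; uniqueness up to reordering and replacing factors by isomorphic factors follows.

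The main obstacle — really the only nontrivial point beyond bookkeeping — is handling the interaction between the twisting by $|\Art_K^{\mo}|_K$ and the multiplicity spaces cleanly: one must be careful that distinct irreducibles $r, r'$ can lie on the same twist-line ($r' \simeq r|\Art_K^{\mo}|_K^a$ for some $a$), so the ``grading'' is by $\bbZ$-torsors rather than by $\bbZ$ itself, and the Jordan-form argument must be run on each such torsor separately; and one must check that $N$ genuinely lowers/raises the weight by exactly one step (never jumping), which is immediate from $N(V_a) \subseteq V_{a+1}$ and nilpotence, but deserves to be stated. Everything else is Schur's lemma over $\Omega$, semisimplicity of $r|_{W_K}$, and the normal form of a graded nilpotent endomorphism.
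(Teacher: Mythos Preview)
Your proposal is correct and is essentially the standard argument: the paper itself gives no independent proof but simply cites \cite[Proposition 3.1.3(i)]{DeligneFormesModulairesGL2}, and what you have written is a faithful reconstruction of that argument (semisimplicity of $r$ as a $W_K$-module from finiteness of $r(I_K)$ together with semisimplicity of $r(\phi)$; $W_K$-equivariance of $N$ viewed as a map $V\to V(-1)$; passing to multiplicity spaces along each unramified-twist line and putting the induced graded nilpotent operator into Jordan form; uniqueness from indecomposability of $\Sp_t(r)$ and the rank sequence of $N^k$). There is nothing to correct beyond the bookkeeping caveats you already flagged about twist-lines being $\bbZ$-torsors.
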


\begin{proof}
This follows from the proof of 
\cite[Proposition 3.1.3 (i)]{DeligneFormesModulairesGL2}. 
\end{proof}

\begin{mydefinition}
\label{Defn: size}
Let $(\rho, V, N)$ be as above. 
Then the integer $\max \{t_i|i\in I\}$ is called the \textnormal{size} of $\rho$. 
\end{mydefinition}

\begin{mydefinition}
An \textnormal{indecomposable summand} of a Frobenius-semisimple Weil-Deligne 
representation $V$ over $\Omega$ is a Weil-Deligne subrepresentation of $V$ 
isomorphic to a summand $\Sp_{t_i}(r_i)_{/\Omega}$ via an isomorphism
$V\simeq \oplus_{i\in I}\Sp_{t_i}(r_i)_{/\Omega}$ 
as in theorem \ref{Thm: Structure of Frob ss Weil-Deligne representations}. 
\end{mydefinition}

While dealing with indecomposable summands of $V$, 
we always implicitly fix an isomorphism 
$V\simeq \oplus_{i\in I}\Sp_{t_i}(r_i)_{/\Omega}$ 
as in theorem \ref{Thm: Structure of Frob ss Weil-Deligne representations}.

\begin{myproposition}
\label{Prop: rationality in general}
Let $(r, N)$ be a Weil-Deligne representation over an integral domain $A$ of characteristic zero. 
Let $\bbQ^\cl$ denote the algebraic closure of $\bbQ$ in $\overline Q(A)$. 
Let $B$ be a subring of $A$ such that the characteristic polynomial of 
$r(g)$ has coefficients in $B$ for all $g\in W_K$. 
Then there exist
\begin{enumerate}[(i)]
\item positive integers $m, t_1 \leq  \cdots \leq t_m$,
\item $(B^\intal)^\times$-valued unramified characters $\chi_1, \cdots, \chi_m$ of $W_K$, 
\item irreducible Frobenius-semisimple representations $\rho_1, \cdots, \rho_m$ of $W_K$ 
with coefficients in $\bbQ^\cl$ with finite image  
\end{enumerate}
such that
$((r, N) \otimes_A \overline Q(A))^\Frss$ is isomorphic to 
$\oplus_{i=1}^m \Sp_{t_i}(\chi_i \otimes \rho_i)$. 
\end{myproposition}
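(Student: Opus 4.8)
The plan is to reduce the statement to the already-established structure theorem \ref{Thm: Structure of Frob ss Weil-Deligne representations} over $\Omega = \overline Q(A)$ and then to chase rationality of all the data down to the subring $B$ (or rather $B^\intal$). First I would apply theorem \ref{Thm: Structure of Frob ss Weil-Deligne representations} to the Frobenius-semisimple Weil-Deligne representation $((r,N)\otimes_A \overline Q(A))^\Frss$ over $\Omega=\overline Q(A)$, obtaining an isomorphism $((r,N)\otimes_A\overline Q(A))^\Frss \simeq \bigoplus_{i\in I}\Sp_{t_i}(s_i)_{/\Omega}$ for irreducible Frobenius-semisimple $s_i:W_K\to\gln_{n_i}(\Omega)$. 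Reordering so that $t_1\le\cdots\le t_m$ (writing $m=\#I$) takes care of (i). The content to be extracted is that each $s_i$ can be written as $\chi_i\otimes\rho_i$ with $\chi_i$ an unramified $(B^\intal)^\times$-valued character and $\rho_i$ defined over $\mathbb{Q}^{\cl}$ with finite image.

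Next I would analyze a single irreducible Frobenius-semisimple $s=s_i$ of $W_K$ over $\Omega$. Since $s(I_K)$ is finite, $s|_{I_K}$ takes values in a finite subgroup; by Brauer/character theory the restriction $s|_{I_K}$ is conjugate to a representation over the field generated by traces, hence over a number field, in fact over $\overline{\mathbb{Q}}\subset\Omega$; one checks this number field may be taken inside $\mathbb{Q}^{\cl}$ since $\mathbb{Q}^{\cl}$ is the algebraic closure of $\mathbb{Q}$ in $\overline Q(A)$ and all these traces are roots of unity (eigenvalues of finite-order operators), hence algebraic over $\mathbb{Q}$. Because $W_K/I_K\cong\mathbb{Z}$ is generated by $\phi$ and $s$ is irreducible, $s(\phi)$ normalizes the isotypic decomposition of $s|_{I_K}$ and Schur's lemma forces $s|_{I_K}$ to be a single isotypic block; then $s(\phi)$ acts, up to a scalar $\alpha\in\Omega^\times$, by an automorphism that can again be taken to have finite order. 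Concretely this produces a factorization $s\simeq \chi\otimes\rho$ where $\chi$ is the unramified character sending $\phi\mapsto\alpha$ and $\rho$ is a finite-image representation defined over $\mathbb{Q}^{\cl}$. This is exactly the decomposition already implicit in the text preceding the proposition (the ``Decomposition of $\WD$ of big $V$ into $\Sp_t r$'' display), and I would invoke the same reasoning, or cite \cite[Proposition 3.1.3]{DeligneFormesModulairesGL2} / standard structure theory, rather than redo it.

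The remaining and genuinely delicate point — the one I expect to be the main obstacle — is pinning down that the Frobenius scalars $\alpha_i$ (equivalently the characters $\chi_i$) are $(B^\intal)^\times$-valued, i.e.\ integral over $B$ and lying in the normalization of $B$ inside $\overline Q(A)$, rather than merely in $\Omega^\times$. Here I would use the hypothesis that $r(g)$ has characteristic polynomial with coefficients in $B$ for all $g\in W_K$: the characteristic polynomials of $r(\phi^j)$ for $j\in\mathbb{Z}$ (and more generally of $r(\sigma)$ for $\sigma\in W_K$) have coefficients in $B$, and on the Frobenius-semisimplification the eigenvalues of $\tilde r(\phi)$ on each $\Sp_{t_i}(\chi_i\otimes\rho_i)$ block are the products of $\alpha_i$ with a fixed root of unity times a power of $q$. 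Since $\rho_i$ has finite image and the $q$-powers are explicit, the $\alpha_i$ (and their inverses, from $\phi^{-1}$) are, up to roots of unity and powers of $q\in B^\times$, symmetric functions of these eigenvalues, hence integral over $B$; thus $\alpha_i,\alpha_i^{-1}\in B^\intal$, which is what ``$(B^\intal)^\times$-valued'' means. One subtlety to handle carefully is that $B^\intal$ is the integral closure of $B_\red$ in $\overline Q(B_\red)$ and one must check the canonical map $B^\intal\to A^\intal\to\overline Q(A)$ places these scalars correctly; this is routine given the definitions in \S\ref{SubSec: notation} but should be stated. Having done this for each $i$, assembling the $\chi_i,\rho_i,t_i$ and the isomorphism over $\overline Q(A)$ completes the proof.
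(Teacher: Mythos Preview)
Your approach is essentially the same as the paper's: apply the structure theorem over $\overline Q(A)$, factor each irreducible constituent as an unramified twist of a finite-image representation, then read off integrality of the twist from the characteristic polynomial hypothesis. Two small cleanups: the paper simply cites \cite[28.6 Proposition]{BushnellHenniart} for the factorization $s_i\simeq\chi_i\otimes\rho_i$ with $\rho_i$ of finite image (your isotypic/Schur sketch would need more care, since a single $I_K$-isotypic block does not by itself force $s(\phi)$ to be scalar times finite order), and for the integrality step you should not invoke ``$q\in B^\times$'' (which need not hold) --- instead observe that on the summand $\chi_i\otimes\rho_i$ inside $\Sp_{t_i}(\chi_i\otimes\rho_i)$ the $\phi$-eigenvalues are $\chi_i(\phi)$ times roots of unity, and these are roots of the characteristic polynomial of $r(\phi)$ over $B$, hence lie in $B^\intal$; the same with $\phi^{-1}$ gives $\chi_i(\phi)^{-1}\in B^\intal$.
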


\begin{proof}
By theorem \ref{Thm: Structure of Frob ss Weil-Deligne representations}, there exist 
positive integers $m, t_1\leq t_2\leq \cdots \leq t_m$, 
irreducible Frobenius-semisimple representations $r_1, \cdots, 
r_m$ of $W_K$ over $\overline Q(A)$ 
such that 
$((r, N) \otimes_A \overline Q(A))^\Frss$ is isomorphic to 
$\oplus_{i=1}^m \Sp_{t_i}(r_i)$. 
From the proof of \cite[28.6 Proposition]{BushnellHenniart}, 
it follows that for each $1\leq i\leq m$, there exists an unramified character 
$\chi_i:W_K\to \overline Q(A)^\times$ such that the $W_K$-representation 
$\chi_i^\mo \otimes r_i$ has finite image. So 
there exists an irreducible Frobenius-semisimple 
representation $\rho_i:W_K\to \gln_{d_i}(\bbQ^\cl)$ with finite image such that
$\chi_i^\mo \otimes r_i$ and $\rho_i$ are isomorphic over $\overline Q(A)$ 
(by \cite[Theorem 1]{TaylorGaloisReprAssociatedToSiegelModForms} for instance). 
So the product of $\chi_i(\phi)$ and a root of unity belongs to $B^\intal$. 
Thus $\chi_i(\phi)$ belongs to $B^\intal$ and similarly, $\chi_i(\phi)^\mo$ belongs to 
$B^\intal$. Hence $\chi_i$ has values in $B^\intal$. 
This proves the result. 
\end{proof}

\begin{mylemma}
\label{Lemma: irreducible Frobenius semisimple under specializations}
Let $r:W_K \to \gln_n(A)$ be an irreducible Frobenius-semisimple representation of $W_K$ 
with coefficients in a domain $A$ of characteristic zero. 
If $B$ is a domain and 
$f:A \to B$ is a ring homomorphism, then $f\circ r$ is also an irreducible 
Frobenius-semisimple representation. 
\end{mylemma}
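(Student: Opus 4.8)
The plan is to verify for $f\circ r$ the three requirements in Definition~\ref{Defn: Weil-Deligne representations}(2): that $(f\circ r)(I_K)$ is finite, that $\phi$ acts semisimply on $(f\circ r)\otimes_B\overline{Q(B)}$, and that this module is irreducible. The first is immediate since $(f\circ r)(I_K)=f(r(I_K))$ is a homomorphic image of a finite group. Since both semisimplicity of $\phi$ and irreducibility can drop under a general ring homomorphism, the key point I would establish first is the structural fact that \emph{there exist an integer $c\geq 1$ and a unit $\lambda\in A^\times$ with $r(\phi)^c=\lambda\cdot\id$}. To see this, observe that, $I_K$ being normal in $G_K$, the element $r(\phi)$ conjugates the finite group $r(I_K)$ onto itself, so conjugation by $r(\phi)$ induces an automorphism of $r(I_K)$; take $c$ to be its order. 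Then $r(\phi)^c$ centralizes $r(I_K)$, and it commutes with $r(\phi)$, hence with $\langle r(I_K),r(\phi)\rangle=r(W_K)$. As $r\otimes_A\overline{Q(A)}$ is irreducible, Burnside's theorem shows that $r(W_K)$ spans $M_n(\overline{Q(A)})$ over $\overline{Q(A)}$, so $r(\phi)^c$ is scalar, say $\lambda\cdot\id$ with $\lambda\in\overline{Q(A)}$; since $r(\phi)^c\in\gln_n(A)$ we get $\lambda\in A$, and comparing determinants, $\lambda\in A^\times$. (Incidentally $r(\phi)$ is then killed by the separable polynomial $X^c-\lambda$, which re-proves that $r$ is Frobenius-semisimple.)

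Granted this, Frobenius-semisimplicity of $f\circ r$ is immediate: $f(r(\phi))^c=f(\lambda)\cdot\id$ with $f(\lambda)\in B^\times$, and since $\overline{Q(B)}$ has characteristic zero and $f(\lambda)\neq 0$, the polynomial $X^c-f(\lambda)$ is separable, so $f(r(\phi))$ is diagonalizable over $\overline{Q(B)}$.

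For irreducibility I would untwist $r$ to a representation with finite image. Since $\lambda\in A^\times$, both $\lambda^{1/c}$ and $\lambda^{-1/c}$ lie in $\overline{Q(A)}$ and are integral over $A$, so $\lambda^{1/c}\in(A^{\intal})^\times$; let $\psi\colon W_K\to(A^{\intal})^\times$ be the unramified character with $\psi(\phi)=\lambda^{-1/c}$ and put $\rho_0:=(r\otimes_A A^{\intal})\otimes\psi$. Then $\rho_0(\phi)^c=\lambda^{-1}r(\phi)^c=\id$ and $\rho_0(I_K)=r(I_K)$, so $\rho_0(W_K)=\rho_0(I_K)\cdot\langle\rho_0(\phi)\rangle$ is a finite subgroup of $\gln_n(A^{\intal})$; thus $\rho_0$ factors as $\widetilde\rho_0\circ\pi$ with $\pi\colon W_K\twoheadrightarrow\overline W$ a finite quotient and $\widetilde\rho_0\colon\overline W\to\gln_n(A^{\intal})$. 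Because $r\otimes_A\overline{Q(A)}$ is irreducible, $\psi$ is a character, and $\overline{Q(A^{\intal})}=\overline{Q(A)}$, the representation $\widetilde\rho_0$ is absolutely irreducible, so the orthogonality relations give $\langle\chi_{\widetilde\rho_0},\chi_{\widetilde\rho_0}\rangle_{\overline W}=1$ in $A^{\intal}[1/|\overline W|]$. Applying $f^{\intal}$ (using that $|\overline W|$ is a unit in $B^{\intal}$) yields $\langle\chi_{f^{\intal}\circ\widetilde\rho_0},\chi_{f^{\intal}\circ\widetilde\rho_0}\rangle_{\overline W}=1$; since $f^{\intal}\circ\widetilde\rho_0$ is a representation of the finite group $\overline W$ over the characteristic-zero domain $B^{\intal}$, it is semisimple by Maschke's theorem, and character-norm $1$ forces it to be absolutely irreducible over $\overline{Q(B^{\intal})}=\overline{Q(B)}$. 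Hence $f^{\intal}\circ\rho_0=(f^{\intal}\circ\widetilde\rho_0)\circ\pi$ is absolutely irreducible, and since $(f\circ r)\otimes_B B^{\intal}=(f^{\intal}\circ\rho_0)\otimes(f^{\intal}\circ\psi)^{-1}$, twisting back shows that $f\circ r$ is irreducible over $\overline{Q(B)}$.

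The main obstacle is the structural fact $r(\phi)^c=\lambda\cdot\id$ with $\lambda$ a \emph{unit}: this is precisely the rigidity that prevents the two pathologies available to a specialization of a general family — a generically semisimple Frobenius acquiring a nonzero unipotent part, or a generically irreducible module decomposing — and, with it in hand, everything else is bookkeeping, the only slightly delicate points being the realization of $\rho_0$ over $A^{\intal}$ and the transport of the character-norm identity $\langle\chi,\chi\rangle=1$ through $f^{\intal}$. One should note that the hypotheses on $B$ must include characteristic zero (implicit in the conventions of the paper) so that the integers $c$ and $|\overline W|$ are invertible.
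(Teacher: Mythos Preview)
Your proof is correct, but it takes a genuinely different route from the paper's. The paper invokes Proposition~\ref{Prop: rationality in general} to write $r\simeq\chi\otimes\rho$ over $\overline Q(A)$ with $\chi$ an unramified character into $(A^\intal)^\times$ and $\rho$ an irreducible Frobenius-semisimple representation with finite image defined over $\bbQ^\cl$, the algebraic closure of $\bbQ$ inside $\overline Q(A)$. The point is then simply that $f^\intal$ restricts to an \emph{injection} on the field $\bbQ^\cl$, so $f^\intal\circ\rho$ is literally $\rho$ viewed inside $\overline Q(B)$ and therefore still irreducible Frobenius-semisimple; twisting back by $f^\intal\circ\chi$ gives the result. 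You instead rederive the untwisting from scratch via Schur/Burnside (your structural fact $r(\phi)^c=\lambda\cdot\id$ with $\lambda\in A^\times$), land on a finite-image representation $\rho_0$ over $A^\intal$ rather than over $\bbQ^\cl$, and then transport the orthogonality relation $\langle\chi_{\widetilde\rho_0},\chi_{\widetilde\rho_0}\rangle=1$ through $f^\intal$ to certify irreducibility on the target side. Your argument is more self-contained --- it avoids Proposition~\ref{Prop: rationality in general} and the Bushnell--Henniart input behind it --- while the paper's argument is shorter because the descent to $\bbQ^\cl$ makes preservation of irreducibility automatic (an injection of algebraically closed fields of characteristic zero cannot destroy irreducibility of a finite-group representation). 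Both buy the same conclusion; the paper's approach also reuses machinery already needed elsewhere, whereas yours would stand alone.
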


\begin{proof}
Let $\bbQ^\cl$ denote the algebraic closure of $\bbQ$ in $\overline Q(A)$. 
By proposition \ref{Prop: rationality in general}, 
there exist an unramified character $\chi:W_K\to (A^\intal)^\times$ 
and an irreducible Frobenius-semisimple representation 
$\rho:W_K \to \gln_n(\bbQ^\cl)$ with finite image 
such that $r$ is isomorphic to $\chi\otimes \rho$ over $\overline Q(A)$. 
As $\rho(W_K)$ is finite, it is contained in $\gln_n(A^\intal[1/m])$ 
for some positive integer $m$. So 
$f^\intal(\rho)$ is isomorphic to $f^\intal (\chi^\mo\otimes r)
=f^\intal (\chi^\mo)\otimes f^\intal (r)
=f^\intal (\chi^\mo)\otimes f (r)$. Thus $f (r)$ is isomorphic to 
$f^\intal(\chi)\otimes f^\intal(\rho)$. This proves the lemma. 
\end{proof}

\begin{mydefinition}
\label{Defn: pure representations}
(\cf\,\cite[p.\,1014]{SchollHypersurfacesWeilConj}) A 
Frobenius-semisimple Weil-Deligne representation $V$ of $W_K$ over $\bbQpbar$ 
is said to be \textnormal{pure of weight $w\in \bbZ$} if the eigenvalues of one (and 
hence any) lift of the geometric Frobenius element on $\Gr_i M_\bullet$ are 
$q$-Weil numbers of weight $w+i$ where $M_\bullet$ denotes the monodromy 
filtration on $V$. 

A finite dimensional representation $V$ of $G_K$ or of $W_K$ over $\bbQpbar$ 
is said to be \textnormal{pure of 
weight} $w\in \bbZ$ if $V|_{W_K}$ is monodromic and the Frobenius semisimplification of the Weil-Deligne 
parametrization of $V|_{W_K}$ with respect to one (and hence any) choice of $\phi$ and 
$\zeta$ is pure of weight $w$.
\end{mydefinition}

We refer to \cite[Definition 2.5]{MilneMotivesOverFiniteFields} for the notion of Weil numbers 
and to \cite[equation 1.5.5]{IllusieMonodromieLocale} for the notion of monodromy filtration. 
\begin{myremark}
\label{Remark: Central summands determines purity}
\normalfont
Let $r_1, \cdots, r_m$ be irreducible Frobenius-semisimple representations of 
$W_K$ over $\bbQpbar$. Then by \cite[I.6.7, p. 166]{DeligneWeil2}, it follows that 
the Weil-Deligne representation $\oplus_{i=1}^m \Sp_{t_i}(r_i)_{/\bbQpbar}$ is pure 
of weight $w$ 
if and only if the $\phi$-eigenvalues on $r_1|\Art_K^\mo|_K^{(t_1-1)/2}$, $\cdots$, $r_m|\Art_K^\mo|_K^{(t_m-1)/2}$ are 
$q$-Weil numbers of weight $w$ (for any choice of a square root of $q$ in $\bbQpbar$). 
\end{myremark}

Let $\Omega$ be an algebraically closed field of characteristic zero. 
For a Weil-Deligne representation $(r,V, N)$ of $W_K$ over $\Omega$, 
its {\it Euler factor} $\Eul((r, N), X)$ is defined as 
the element $\det (1-X \phi|_{V^{I_K, N=0}})^\mo$ of $\Omega(X)$ 
where $V^{I_K, N=0}$ denotes the subspace of $V$ on which $I_K$ acts trivially 
and $N$ is zero. 
For a representation $\rho:\Gal(\overline E/E)\to \gln(V)$ of the 
absolute Galois group of a number field $E$ on a finite dimensional vector 
space $V$ over $\Omega$, 
its local {\it Euler factor} $\Eul_v(\rho, X)$ 
at a finite place $v$ of $E$ not dividing $p$ is defined 
to be the element $\Eul(\WD(V|_{W_v}), X)$ in $\Omega(X)$ 
if $V|_{W_v}$ is monodromic. 
We refer to \cite[p.\,85]{TaylorGaloisReprExtended} for details.

\section{The local Langlands correspondence and its extensions}
\label{Sec: Automorphic and Galois types}

The local Langlands correspondence for $\gln_n(K)$ is known due to 
works of Harris, Taylor \cite{HarrisTaylorGeometryCohomologyShimuraVar}. 
Depending on the required normalization, 
there are various choices of this correspondence. 
We prefer to work with the unitary local Langlands correspondence, 
which depends on the choice of a square root of $q$ in $\bbQpbar$, which we fix from now on. 
We denote the reciprocity map by $\rec$. 

\subsection{The modified local Langlands correspondence of Breuil-Schneider}
\label{SubSec: modified LLC}

We recall the modified local Langlands correspondence of Breuil-Schneider 
and its extension to Weil-Deligne representations with coefficients in 
any extension of $\bbQp$. We refer to \cite[p.~161--164]{BreuilSchneiderFirstStep} and 
\cite[\S 4.2]{EmertonHelmLLC} for details. 

Let $(\rho, N)$ be a Frobenius-semisimple Weil-Deligne representation of $W_K$ over $\bbQpbar$. 
Let $\pi(\rho, N)$ denote the indecomposable admissible representation of $\gln_n(K)$ 
over $K$ attached to $(\rho, N)$ via the Breuil-Schneider modified local Langlands correspondence 
(see \cite[p.~161--164]{BreuilSchneiderFirstStep}). 
To define the representation $\pi(\rho, N)$, one needs to choose a square root of $q$. However 
the representation $\pi(\rho, N)$ is independent of this choice. 

In \cite{EmertonHelmLLC}, this modified correspondence is extended to 
Frobenius-semisimple Weil-Deligne representations over an arbitrary field extension of $\bbQp$. 
For a Frobenius-semisimple Weil-Deligne representation $(\rho, N)$ of $W_K$ 
over an extension $L$ of $\bbQp$, let $\pi(\rho, N)$ denote the indecomposable admissible 
representation of $\gln_n(K)$ over $L$ attached to $(\rho, N)$ (see \S 4.2 of \loccit.).
The smooth contragredient of $\pi(\rho, N)$ is denoted by $\pitilde(\rho, N)$. 
If $r$ is a monodromic representation of $W_K$ on a finite dimensional vector space over $L$, 
then we denote by $\widetilde \pi (r)$ the representation $\widetilde \pi (\WD(r)^\Frss)$.

\subsection{The local Langlands correspondence for $\gln_n$ in families}
\label{SubSec: LLC families notations} 
Let $A$ be a complete reduced $p$-torsion free Noetherian local ring with finite residue field of characteristic $p$. 
Let $\frakm$ denote the maximal ideal of $A$. 
The residue field of a prime ideal $\frakp$ of $A$ is denoted by $\kappa(\frakp)$. 
For a prime ideal $\frakp$ of $A$, 
the mod $\frakp$ reduction of a representation $\rho$ of a group on an $A$-module is denoted by $\rho_\frakp$. 
We refer to \cite{EmertonHelmLLC} for unfamiliar notations and terminologies used below. 

\begin{mytheorem}
\label{Thm: EH 621}
Let $E$ be a number field and $S$ denote a finite set of non-archimedean places of $E$, none of which lie over $p$. 
For each $v\in S$, let $r_v: G_{E_v} \to \gln_n(A)$ be a continuous representation. 
Write $G= \prod_{v\in S} \gln_n(E_v)$. Then there exists at most one (up to isomorphism) 
admissible smooth representation $V$ of $G$ over $A$ satisfying the conditions below. 
\begin{enumerate}
\item $V$ is $A$-torsion free, \ie, all associated primes of $V$ are minimal primes of $A$.
\item For each minimal prime $\fraka$ of $A$, there is a $G$-equivariant isomorphism 
$$\bigotimes_{v\in S} \pitilde (r_{v, \fraka}) \xra{\sim} \kappa(\fraka) \otimes_A V.$$
\item The $G$-cosocle $\cosoc (V/\frakm V)$ of $V/\frakm V$ is absolutely irreducible and generic, while the kernel 
of the natural surjection $V/\frakm V\to \cosoc (V/\frakm V)$ contains no generic subrepresentations. 
\end{enumerate}

\end{mytheorem}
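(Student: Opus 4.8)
The plan is to recognize $V$ as a \emph{co-Whittaker} $A[G]$-module and to read off its isomorphism class from its Whittaker data, following the theory developed in \cite{EmertonHelmLLC}; only the uniqueness (``at most one'') is at issue here, so no existence statement needs to be proved. First I would observe that conditions (1) and (3) are exactly the hypotheses defining a co-Whittaker module: condition (1) is $A$-torsion-freeness, and condition (3) says that the reduction $V/\frakm V$ is essentially AIG (absolutely irreducible and generic $G$-cosocle, with no generic subrepresentation inside the kernel of the projection to the cosocle). When $S$ has several elements one works with the product group $G=\prod_{v\in S}\gln_n(E_v)$ throughout; the co-Whittaker formalism of \cite{EmertonHelmLLC} is set up for such products (external tensor products of co-Whittaker modules), so it is harmless to picture the one-place situation $G=\gln_n(K)$, $K=E_v$, $r=r_v$. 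Since $A$ is $p$-torsion free, each residue field $\kappa(\fraka)$ at a minimal prime $\fraka$ has characteristic zero, and as $S$ avoids $p$ the reductions $r\otimes_A\kappa(\fraka)$ are the kind of representations to which the (unitary) local Langlands and Breuil--Schneider recipes apply.

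Next I would attach to a co-Whittaker module $V$ its Whittaker model. Fixing a nondegenerate character $\psi$ of the unipotent radical $N$ of a Borel of $G$, the $(N,\psi)$-coinvariants $V_{(N,\psi)}$ are free of rank one over $A$, and the associated Whittaker functional gives a $G$-equivariant map $W_V\colon V\to\Ind_N^G\psi$ whose image $\mathcal W(V)$ I call the Whittaker model of $V$. The key structural input --- this is where the substantive representation theory sits --- is that a co-Whittaker $A[G]$-module is determined up to isomorphism by $\mathcal W(V)\subseteq\Ind_N^G\psi$; equivalently, there is a universal co-Whittaker module over the integral Bernstein centre $\mathfrak Z$ of $G$ through which every co-Whittaker $A[G]$-module is obtained by base change along a map $\mathfrak Z\to A$, and that map is pinned down by its composites $\mathfrak Z\to A\to\kappa(\fraka)$ because $A$ is reduced. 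Either way, it therefore suffices to show that the Whittaker model (equivalently the map $\mathfrak Z\to A$) is forced by the pair $(A,r)$ alone and is independent of the choice of $V$.

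To finish, I would use condition (2) together with the reducedness of $A$. Reducedness gives $A\hookrightarrow\prod_\fraka\kappa(\fraka)$ over the finitely many minimal primes, and $A$-torsion-freeness of $V$ then gives $V\hookrightarrow\prod_\fraka\bigl(\kappa(\fraka)\otimes_A V\bigr)$. Condition (2) identifies each factor with $\pitilde(r_\fraka)$, the smooth contragredient of the representation attached to $\WD(r\otimes_A\kappa(\fraka))^\Frss$ by the Emerton--Helm extension of the Breuil--Schneider correspondence --- an object that depends only on $r$ and $\fraka$. Since forming Whittaker models commutes with these localizations, $\mathcal W(V)$ is exhibited as an $A$-lattice inside $\prod_\fraka\mathcal W\bigl(\pitilde(r_\fraka)\bigr)$, and the essentially-AIG condition on $V/\frakm V$ (together with $\frakm$-adic/Nakayama reasoning, using completeness of $A$) rigidifies that lattice so that it is itself determined by $r$; in Bernstein-centre terms, the composites $\mathfrak Z\to\kappa(\fraka)$ are read off from $r\otimes_A\kappa(\fraka)$ via the local Langlands recipe of \cite{EmertonHelmLLC}. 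Hence any two representations $V_1,V_2$ satisfying (1)--(3) have the same Whittaker model and the same associated map $\mathfrak Z\to A$, so $V_1\cong V_2$.

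The main obstacle is not the bookkeeping of the last paragraph but the structure theory invoked in the second: freeness of rank one of the $(N,\psi)$-coinvariants, the existence and base-change property of the universal co-Whittaker module over the integral Bernstein centre, and the fact that a co-Whittaker module is recovered from its Whittaker model. These rest on the description of the integral Bernstein centre of $\gln_n(K)$, on Bushnell--Kutzko types, and on a careful reduction-mod-$\frakm$ analysis; an extra subtlety is that $W_V$ need not be injective --- its kernel encodes the non-generic constituents of $V$ --- so the faithfulness of $V\mapsto\mathcal W(V)$ is itself nontrivial. Granting that theory, which is the content of \cite{EmertonHelmLLC}, the uniqueness asserted in Theorem~\ref{Thm: EH 621} follows as sketched, using only the reducedness of $A$, the $A$-torsion-freeness of $V$, and condition (2).
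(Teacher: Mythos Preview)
Your sketch is substantially more than the paper provides: the paper's proof of this statement is the single sentence ``It is a part of \cite[Theorem 6.2.1]{EmertonHelmLLC}.'' The result is quoted wholesale from Emerton--Helm, so there is no in-paper argument to compare against.

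That said, your outline is a reasonable reconstruction of the Emerton--Helm uniqueness argument. A couple of cautions. First, some of the language you use --- ``co-Whittaker module'', the universal co-Whittaker module over the integral Bernstein centre $\mathfrak Z$, and the clean statement that $V$ is recovered from the map $\mathfrak Z\to A$ --- is really the packaging from Helm's later paper on Whittaker models and the integral Bernstein centre, not from \cite{EmertonHelmLLC} itself; the 2014 paper argues more directly with lattices inside $\prod_\fraka \pitilde(r_\fraka)$ and the essentially-AIG condition. So if you want your sketch to match the actual reference, you should either phrase it in those terms or cite Helm's follow-up explicitly. Second, the step where you say the essentially-AIG condition ``rigidifies that lattice'' is exactly where the work in \cite{EmertonHelmLLC} lives (their \S 6), and you correctly flag this as the nontrivial input; but as written your paragraph asserts rather than explains it, so your sketch is honest about being a reduction to \cite{EmertonHelmLLC} rather than an independent proof. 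Given that the paper itself is content to cite, your proposal is more than adequate.
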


\begin{proof}
It is a part of \cite[Theorem 6.2.1]{EmertonHelmLLC}.
\end{proof}

When $V$ exists, we denote it by $\pitilde (\{r_v\}_{v\in S})$. 
When $S$ contains only one place, we denote $V$ by $\pitilde(r_v)$. 
By \cite[Proposition 6.24]{EmertonHelmLLC}, the $A[G]$-module $\pitilde(\{r_v\}_{v\in S})$ exists 
if and only if each of the individual $A[\gln_n(E_v)]$-modules $\pitilde(r_v)$ exists. 
For a minimal prime $\fraka$ of $A[1/p]$, the monodromy of $r_{v, \fraka}$ 
is denoted by $N_v(\fraka)$.

\begin{mytheorem}
\label{Thm: EH 626}
Let $S$ be as in theorem \ref{Thm: EH 621} and $\frakp$ be a prime ideal of $A[1/p]$. Let $\fraka_1, \cdots, \fraka_s$ be the minimal primes of 
$A$ contained in $\frakp$. For each $i=1, \cdots, s$, let $V_i$ 
be the maximal $A$-torsion free quotient of $\pitilde (\{r_v\}_{v\in S})\otimes_A A/\fraka_i$. Let $W_\frakp$ denote the image of 
the diagonal map 
$$\kappa(\frakp) \otimes_A \pitilde (\{r_v\}_{v\in S}) \to \prod_i \kappa(\frakp) \otimes_{A/\fraka_i} V_i.$$
Suppose that the $A[G]$-module $\pitilde(\{r_v\}_{v\in S})$ exists. 
Then there is a $\kappa(\frakp)$-linear $G$-equivariant surjection 
$$\varsigma_\frakp:\bigotimes_{v\in S} \pitilde(r_{v, \frakp}) \to W_\frakp.$$
Moreover, if $\fraka$ is a minimal prime ideal of $A$ contained in $\frakp$ such that 
the rank of $N_v(\fraka)^j$ is equal to the rank of $(N_v(\fraka) \otimes_{A/\fraka} \kappa(\frakp))^j$ for 
all $j\geq 1$ and for any $v\in S$, then the surjection $\varsigma_\frakp$ is an isomorphism. 
Furthermore, when $s$ is equal to one, there is a $\kappa(\frakp)$-linear $G$-equivariant surjection 
$$\gamma_\frakp: \bigotimes_{v\in S} \widetilde \pi (r_{v, \frakp}) 
\to 
\kappa (\frakp) \otimes_A 
\widetilde \pi (\{r_v \}_{v\in S})$$
and it is an isomorphism if 
the rank of $N_v(\fraka_1)^j$ is equal to the rank of $(N_v(\fraka_1) \otimes_{A/\fraka_1} \kappa(\frakp))^j$ for 
all $j\geq 1$ and for any $v\in S$.
\end{mytheorem}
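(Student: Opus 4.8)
The plan is to deduce this statement directly from the work of Emerton and Helm: from \cite[Theorem 6.2.5]{EmertonHelmLLC} for the surjection $\varsigma_\frakp$ and the criterion for it to be an isomorphism, and from \cite[Theorem 6.2.6]{EmertonHelmLLC} for the surjection $\gamma_\frakp$ and its isomorphism criterion when $s=1$. No genuinely new argument is needed; the task is to check that the objects introduced in the statement above are the ones appearing in \loccit. First I would confirm that $A$ together with the representations $r_v$ satisfies the running hypotheses of \cite{EmertonHelmLLC}, which is built into the setup of \S\ref{SubSec: LLC families notations} and theorem \ref{Thm: EH 621}, and recall that, by \cite[Proposition 6.24]{EmertonHelmLLC}, existence of the tensor-product module $\pitilde(\{r_v\}_{v\in S})$ reduces to existence of each $\pitilde(r_v)$, which is exactly our hypothesis.

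Next I would unwind the definitions of the maximal $A$-torsion free quotients $V_i$ of $\pitilde(\{r_v\}_{v\in S})\otimes_A A/\fraka_i$, of the diagonal map into $\prod_i \kappa(\frakp)\otimes_{A/\fraka_i}V_i$, and of its image $W_\frakp$, and verify that they coincide verbatim with the constructions in \cite[\S 6.2]{EmertonHelmLLC}; given this, the existence of $\varsigma_\frakp$, and of $\gamma_\frakp$ once $s=1$, is precisely their assertion. For the isomorphism statements, the key input is that the Breuil--Schneider modification behaves well under specialization of the monodromy: when $\rk N_v(\fraka)^j=\rk\big(N_v(\fraka)\otimes_{A/\fraka}\kappa(\frakp)\big)^j$ for all $j\geq 1$ and all $v\in S$, the Frobenius-semisimple Weil--Deligne representation $\WD(r_{v,\frakp})^\Frss$ has the same family of indecomposable $\Sp_t$-summands (in the sense of theorem \ref{Thm: Structure of Frob ss Weil-Deligne representations}) as $\WD(r_{v,\fraka})^\Frss$, hence $\pi(r_{v,\frakp})\cong\kappa(\frakp)\otimes\pi(r_{v,\fraka})$; tensoring over $v\in S$ and passing to smooth contragredients then upgrades $\varsigma_\frakp$ (resp. $\gamma_\frakp$) to an isomorphism.

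The main obstacle I expect is not mathematical depth but fidelity of translation. One must make sure that the normalization of the local Langlands correspondence fixed in \S\ref{Sec: Automorphic and Galois types} (the unitary one, depending on the chosen square root of $q$) is the normalization under which \cite[Theorem 6.2.5, 6.2.6]{EmertonHelmLLC} are stated, so that $\pitilde(r_{v,\frakp})$, $\WD(r_{v,\frakp})^\Frss$, and the associated representations mean the same thing on both sides, and that ``maximal $A$-torsion free quotient'' and the image of the diagonal map are formed in exactly the same way. Once these identifications are pinned down, the theorem is a direct citation.
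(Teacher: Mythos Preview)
Your proposal is correct and matches the paper's approach exactly: the paper's proof is the single sentence ``It is the content of \cite[Theorem 6.2.5, 6.2.6]{EmertonHelmLLC}.'' Your additional unpacking of the definitions and the monodromy-rank explanation for the isomorphism criterion is more detailed than what the paper provides, but it is in the same spirit and not a different route.
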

\begin{proof}
It is the content of \cite[Theorem 6.2.5, 6.2.6]{EmertonHelmLLC}. 
\end{proof}

\subsection{Autmorphic types}
\label{SubSec: Automorphic Galois type}
\begin{mydefinition}\label{Defn: Automorphic types}
Let $(\rho, N)$ be a Frobenius-semisimple Weil-Deligne representation of $W_K$ over a $\bbQpbar$. 
Let $I, m_1, \cdots, m_I$ be positive integers and 
$r_1, \cdots, r_I$ be irreducible Frobenius-semisimple representations of $W_K$ over $\overline L$ such that 
$(\rho, N) \otimes_L \overline L$ is isomorphic to $\oplus_{i=1}^I \Sp_{m_i}(r_i)$. 
We define the 
\textnormal{automorphic representation type $\AT^\rep (\rec (\rho, N)  )$ of} $\rec(\rho, N)$ to be 
$$\AT^\rep(\rec(\rho, N)) 
= ((\rec(r_1), m_1), \cdots, (\rec(r_I), m_I))$$
and the 
\textnormal{automorphic type $\AT(\rec(\rho, N))$ of} $\rec(\rho, N)$ to be 
$$\AT(\rec(\rho, N))= ((\dim r_1, m_1), \cdots, (\dim r_I, m_I)).$$
\end{mydefinition}

Note that though automorphic representation type and automorphic 
type of $\rec(\rho, N)$ is defined using the `Galois data' $r_i, m_i$ attached to $(\rho, N)$, 
these can also be defined in terms of automorphic representations 
attached to $\rec(\rho, N)$. 
Thus these notions are automorphic in nature. 
In fact, if we use Bernstein-Zelevinsky classification 
\cite{BernsteinZelevinskyInducedReprReductive, ZelevinskyInducedReprReductive} 
to express $\rec(\rho, N)$ as the quotient of an induced representation 
attached to some intervals $[\pi_1, n_1], \cdots, [\pi_J, n_J]$ 
where $\pi_i$ is a supercuspidal representation of $\gln_{d_i}(K)$ 
(see \cite[\S 4.3]{RodierBourbaki} for details), 
then by the local Langlands correspondence 
(see \cite[\S 4.4]{RodierBourbaki} for instance), 
it follows that $I=J$ and up to some reordering, 
$\pi_i \simeq \rec (r_i), d_i = \dim r_i , n_i= m_i$ for all $1\leq i\leq I$.

\section{Purity for big Galois representations}
\label{Sec: big purity}
Let $K, \calR, \calT, \calV, \iota_p$ be as in \S \ref{SubSec: Set up}. 
Denote the fraction field of $\calR$ by $\calL$ and 
the algebraic closure of $\bbQ$ in $\calLbar$ by $\bbQ^\cl$. 
Notice that $\bbQ^\cl$ is contained inside $\calR^\intal[1/p]$. 
Then by proposition \ref{Prop: rationality in general}, 
there exist positive integers $m, t_1\leq \cdots \leq t_m$, 
unramified characters $\chi_1, \cdots, \chi_m:W_K\to 
(\calR^\intal)^\times$, 
irreducible Frobenius-semisimple representations 
$\rho_1:W_K\to \gln_{d_1}(\bbQ^\cl), \cdots, \rho_m: W_K\to 
\gln_{d_m}(\bbQ^\cl)$ with finite image such that 
\begin{equation}
\label{Eqn: Decomposition of WD of big V into Sp t r}
\WD(\calV)^\Frss
\simeq 
\bigoplus_{i=1}^m \Sp_{t_i} (\chi_i\otimes \rho_i)_{/\calLbar}. 
\end{equation}
Let $\lambda: \calR \to \bbQpbar$ be a $\bbZp$-algebra homomorphism and 
$\pi_\lambda$ be the automorphic representation $\rec(\iota_p(\WD(V_\lambda)^\Frss))$.

\begin{mytheorem}[Purity for big Galois representations]\label{Thm: purity big}
Suppose $V_\lambda$ is pure of weight $w$. 

\begin{enumerate}

\item The Weil-Deligne representations 
$\WD(V_\lambda)^\Frss$ and $\oplus_{i=1}^m \Sp_{t_i}(\lambda^\intal\circ (\chi_i \otimes \rho_i))_{/\bbQpbar}$ are isomorphic. 

\item The rank of no power of the monodromy of $\calT_p$ decreases after specializing at $\lambda$. 

\item The polynomial $\Eul(\calV, X)^\mo$ has coefficients in 
$\calR^\inte$ and its specialization under $\lambda$ is $\Eul(V_\lambda, X)^\mo$.

\item The automorphic representation type $\AT^\rep(\pi_\lambda)$ of $\pi_\lambda$ is equal to 
$$\left( ( \rec(\iota_p( \lambda^\intal(\chi_1\otimes \rho_1))), t_1), \cdots, ( \rec( \iota_p(\lambda ^\intal(\chi_m\otimes \rho_m))), t_m)  \right).$$
\item The automorphic type $\AT(\pi_\lambda)$ of $\pi_\lambda$ is equal to the unordered tuple \newline
$\{(\dim \rho_1, t_1), \cdots, (\dim \rho_m, t_m) \}$. 
\end{enumerate}
Moreover, for any field extension $\calK$ of $\bbQp$ and any $\bbZp$-algebra homomorphism $\mu: \calR \to \calK$ with $\lambda (\ker \mu)=0$, 
the Weil-Deligne representation $\WD(V_\mu\otimes_\calK\calKbar)^\Frss$ is isomorphic to $\oplus_{i=1}^m \Sp_{t_i}(\mu^\intal \circ (\chi_i \otimes \rho_i) )_{/\calKbar}$. 
\end{mytheorem}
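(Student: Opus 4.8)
The plan is to prove the five numbered assertions essentially simultaneously, since they all flow from a single comparison of decompositions, and then to derive the ``moreover'' clause as a byproduct. First I would set up the key dichotomy for each factor $\Sp_{t_i}(\chi_i\otimes\rho_i)$ in \eqref{Eqn: Decomposition of WD of big V into Sp t r}: after specializing at $\lambda$, Lemma \ref{Lemma: irreducible Frobenius semisimple under specializations} guarantees that $\lambda^\intal\circ(\chi_i\otimes\rho_i)$ remains irreducible Frobenius-semisimple over $\bbQpbar$ (note $\rho_i$ has finite image, so it lands in $\calR^\intal[1/p]$ and is unaffected, while $\chi_i$ is unramified with values in $(\calR^\intal)^\times$, so $\lambda^\intal(\chi_i(\phi))\in\bbQpbar^\times$ is well-defined). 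Hence $\oplus_{i=1}^m \Sp_{t_i}(\lambda^\intal\circ(\chi_i\otimes\rho_i))_{/\bbQpbar}$ is at least a well-defined Frobenius-semisimple Weil-Deligne representation whose trace on $W_K$ (ignoring $N$) agrees with that of $\WD(V_\lambda)^\Frss$, because traces of the semisimplified $W_K$-action specialize compatibly. The subtlety, exactly as flagged in the Organization section, is that specialization can only \emph{merge} Jordan blocks of the monodromy, never split them: so a priori $\WD(V_\lambda)^\Frss\simeq\oplus_j\Sp_{s_j}(\sigma_j)$ with the multiset of pairs $(\dim\sigma_j, s_j)$ obtained from $\{(\dim\rho_i,t_i)\}$ by ``collapsing'' some $\Sp$-blocks, equivalently each $s_j\geq$ the corresponding $t_i$'s. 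Concretely, for a fixed character twist class, a chain $\Sp_{t}(\chi\otimes\rho)$ can only recombine with others sharing the same $\rho$ and whose unramified twists line up as consecutive powers of $|\Art_K^{\mo}|_K$ after specialization, producing longer chains $\Sp_{t'}$ with $t'\geq t$ and strictly larger when a genuine collapse occurs.

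Next I would invoke purity of $V_\lambda$ to rule out every such collapse. By Remark \ref{Remark: Central summands determines purity}, purity of weight $w$ of $\oplus_j\Sp_{s_j}(\sigma_j)$ forces the $\phi$-eigenvalues on each $\sigma_j\,|\Art_K^{\mo}|_K^{(s_j-1)/2}$ to be $q$-Weil numbers of weight $w$; in particular the $\phi$-eigenvalues on $\sigma_j$ itself are $q$-Weil numbers of weight $w+(s_j-1)$, so they have a fixed archimedean absolute value determined by $s_j$ under the fixed embedding $\iota_p$. On the other hand, the $\phi$-eigenvalues appearing on the constituents $\lambda^\intal(\chi_i\otimes\rho_i)$ are $\lambda^\intal(\chi_i(\phi))\cdot\zeta$ for roots of unity $\zeta$ (the eigenvalues of $\rho_i(\phi)$), hence their absolute values are $|\lambda^\intal(\chi_i(\phi))|$, a single number for each $i$. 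If two indecomposable blocks with parameters $(\rho,t)$ and $(\rho,t')$, carrying unramified twists $\chi,\chi'$, were to merge into one $\Sp$-chain, then along that chain the twists would be forced to be $\chi' = \chi\,|\Art_K^{\mo}|_K^{\pm(\text{something})}$, so $|\lambda^\intal(\chi(\phi))|$ and $|\lambda^\intal(\chi'(\phi))|$ would differ by a nontrivial power of $q$ \emph{and} the merged chain would have length $> t, t'$ — but purity pins the absolute value of the $\phi$-eigenvalue on the head of each would-be summand to the value dictated by the block length, giving a contradiction between ``the block length says absolute value $q^{(w+t-1)/2}$'' and ``it actually equals $q^{(w+s-1)/2}$ with $s>t$''. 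Making this archimedean-size bookkeeping precise (tracking which powers of $q$ can appear, and that no accidental coincidence of absolute values lets a collapse sneak through) is the main obstacle, and it is precisely the argument gestured at in the discussion following the statement of Theorem \ref{Thm: purity big}. Once collapses are excluded, $\WD(V_\lambda)^\Frss\simeq\oplus_{i=1}^m\Sp_{t_i}(\lambda^\intal\circ(\chi_i\otimes\rho_i))_{/\bbQpbar}$, which is assertion (1).

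From (1) the remaining items are formal. Assertion (2): the rank of $N^j$ on $\Sp_{t}(\cdot)$ is $\dim\rho\cdot\#\{\,0\le a\le t-1-j\,\} = \dim\rho\cdot(t-j)_{+}$, which depends only on the $\Sp$-type data $\{(\dim\rho_i,t_i)\}$; since (1) shows this data is unchanged by $\lambda$, and since for a general specialization ranks of powers of monodromy can only drop, they in fact stay constant — so no power of the monodromy of $\calT_p$ loses rank under $\lambda$. Assertion (3): by definition $\Eul(\calV,X)^{\mo}=\det(1-X\phi\mid \calV^{I_K,N=0})$; from \eqref{Eqn: Decomposition of WD of big V into Sp t r} the space $\calV^{I_K,N=0}$ is $\oplus_i \chi_i|\Art_K^{\mo}|_K^{t_i-1}\otimes\rho_i^{I_K}$, on which $\phi$ has eigenvalues $q^{-(t_i-1)}\chi_i(\phi)$ times eigenvalues of $\rho_i(\phi)$; Proposition \ref{Prop: rationality in general} places the $\chi_i(\phi)$ in $\calR^\intal$, and the symmetric functions of the $\rho_i(\phi)$-eigenvalues are integers, so after taking $\phi$-invariant combinations the coefficients of $\Eul(\calV,X)^{\mo}$ lie in $\calR^\inte$; applying $\lambda$ and using (1) (which controls exactly which $I_K$-invariant, $N$-killed part survives) gives $\lambda(\Eul(\calV,X)^{\mo})=\Eul(V_\lambda,X)^{\mo}$. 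Assertions (4) and (5) are immediate restatements of (1) under the fixed isomorphism $\iota_p$, unwinding Definition \ref{Defn: Automorphic types}. Finally, for the ``moreover'' clause, let $\mu:\calR\to\calK$ with $\lambda(\ker\mu)=0$, i.e.\ $\ker\mu\subseteq\ker\lambda=:\frakp_\lambda$. Localizing at $\frakp_\lambda$, the representation $\calT\otimes_\calR\calR/\ker\mu$ has the \emph{same} $\Sp$-type parameters as recorded in (2)—one sees this by the no-rank-drop statement applied to the chain of specializations $\calR\to\calR/\ker\mu$, whose further specialization to $\bbQpbar$ is $\lambda$—so the monodromy Jordan type is rigid along the whole way, and Lemma \ref{Lemma: irreducible Frobenius semisimple under specializations} shows each $\mu^\intal\circ(\chi_i\otimes\rho_i)$ stays irreducible Frobenius-semisimple over $\calKbar$; hence $\WD(V_\mu\otimes_\calK\calKbar)^\Frss\simeq\oplus_{i=1}^m\Sp_{t_i}(\mu^\intal\circ(\chi_i\otimes\rho_i))_{/\calKbar}$, as desired.
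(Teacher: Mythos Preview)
Your argument for part (1) rests on a reversed inequality. You assert that under specialization the monodromy can only \emph{merge} Jordan blocks, so that the block sizes $s_j$ in $\WD(V_\lambda)^\Frss$ satisfy $s_j\geq t_i$. This is backwards: the monodromy $N$ of $\calT_p$ is a fixed nilpotent matrix over $\calR_p$, and for each $k\geq 1$ the rank of $\lambda(N)^k$ can only be \emph{less than or equal to} the rank of $N^k$ over $\calL$. Consequently Jordan blocks can only \emph{split} under specialization, never merge. (Concretely: if $N=\begin{pmatrix}0&x\\0&0\end{pmatrix}$ over $\bbZ_p[[x]]$, the generic Jordan type is $(2)$, but specializing $x\mapsto 0$ gives type $(1,1)$.) Your purity argument is therefore aimed at excluding a phenomenon that cannot occur, while the genuine obstruction --- that some $\Sp_{t_i}$ might break into several shorter $\Sp$-blocks after applying $\lambda$ --- is never addressed.

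The paper's proof proceeds in the opposite direction. Because $N^{t_m}=0$ on $\calT_p$, the same holds for $\lambda(N)$, so every indecomposable summand of $\WD(V_\lambda)^\Frss$ has size at most $t_m$ (condition (C)). Combined with purity of weight $w$, this forces every irreducible constituent of $\WD(V_\lambda)^\Frss$ to have weight in the interval $[w-(t_m-1),\,w+(t_m-1)]$. But the multiset $S$ of specialized constituents contains $\lambda^\intal(r_m)$ and $\lambda^\intal(|\Art_K^\mo|_K^{t_m-1}r_m)$, whose weights differ by exactly $2(t_m-1)$; hence $\lambda^\intal(r_m)$ sits at weight $w+t_m-1$, and by purity the only constituents at that weight are heads of $\Sp_{t_m}$-blocks. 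This forces $\Sp_{t_m}(\lambda^\intal(r_m))$ to be a direct summand. One then peels it off and inducts: the key step is a rank comparison showing that the complementary summand $W$ is annihilated by the $t_{m'}$-th power of its monodromy (condition (C$'$)), which re-enables the weight argument with $t_{m'}$ in place of $t_m$.

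Your treatment of the ``moreover'' clause also has a gap. Rigidity of the Jordan type of $N$ along the chain $\calR\to\calR/\ker\mu\to\bbQpbar$, together with irreducibility of the $\mu^\intal(\chi_i\otimes\rho_i)$, does \emph{not} by itself determine the Frobenius-semisimple Weil-Deligne representation: the multiset of constituents plus the partition $(t_1,\ldots,t_m)$ can be realized by several inequivalent arrangements (for example $\Sp_3(\chi)\oplus\Sp_1(\chi|\Art_K^\mo|_K^{-1})$ and $\Sp_3(\chi|\Art_K^\mo|_K^{-1})\oplus\Sp_1(\chi|\Art_K^\mo|_K^{2})$ have the same Jordan type and the same constituent multiset). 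The paper handles this by applying part (1) to the factorization $\lambda=\eta\circ\mu$ (with $\eta:\calO_\mu\to\bbQpbar$) to decompose $\WD(V_\lambda)^\Frss$ in terms of the unknown $s_i$'s appearing in $\WD(V_\mu)^\Frss$, and then uses a second inductive weight comparison to match each $s_i$ with the correct $\mu^\intal(r_i)$.
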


\begin{proof}
Denote the representation $\chi_i \otimes \rho_i$ by $r_i$ and the multiset 
$\cup_{i=1}^m 
\{ \lambda^\intal \circ r_i, \lambda^\intal \circ (|\Art^\mo_K|_K r_i), \cdots, \lambda^\intal \circ ( |\Art^\mo_K|_K^{t_i-1}r_i) \}$ by $S$. 
Let $N\in \End_{\calR_p}(\calT_p)$ be the monodromy of $\calT_p$. 
Note that conditions (A), (B), (C) below hold with $D= t_m$ 
(by equation \eqref{Eqn: Decomposition of WD of big V into Sp t r}). 
\begin{enumerate}[(A)]
 \item $\WD(V_\lambda)^\Frss$ is pure of weight $w$,
\item $\lambda^\intal \circ \tr \WD(\calV)^\Frss = \tr \WD(V_\lambda)^\Frss$, 
\item $\WD(V_\lambda)^\Frss$ is annihilated by the $D$-th power of its monodromy where 
$D$ denotes the size of $\WD(\calV)^\Frss$.
\end{enumerate}
The indecomposable summands of $\WD(V_\lambda)^\Frss$ are of size (see definition \ref{Defn: size}) at most $t_m$ by condition (C) and 
are of weight $w$ by condition (A) and remark \ref{Remark: Central summands determines purity}. 
Since the elements of $S$ are irreducible Frobenius-semisimple $W_K$-representations (by lemma \ref{Lemma: irreducible Frobenius semisimple under specializations}) and the sum of their traces is equal to $\tr \WD(V_\lambda)^\Frss$ (by condition (B)), 
the difference of the weights of 
any two elements of the multiset $S$ is at most 
$2(t_m-1)$. Note that 
the difference of the weights of $\lambda^\intal(r_m ), \lambda^\intal(|\Art_K^\mo|_K^{t_m-1}r_m)$ is $2(t_m-1)$. 
So these are a highest weight and a 
lowest weight element of $S$ respectively. 
By condition (A), $w$ is equal to the average of the weights of a highest 
weight and a lowest weight element of $S$, \ie, 
the average of the weights of 
$\lambda^\intal(r_m )$ and $\lambda^\intal(|\Art_K^\mo|_K^{t_m-1}r_m)$. 
So $\lambda^\intal(r_m)$ has weight $w+t_m-1$. 
Since $\lambda^\intal(r_m)$ is a highest weight element of $S$ 
and $\WD(V_\lambda)^\Frss$ is pure of weight $w$ (by condition (A)), 
the Weil-Deligne representation $\Sp_{t_m}(\lambda^\intal  (r_m))$ is a direct summand of 
$\WD(V_\lambda)^\Frss$. 
Now suppose that for an integer $1\leq m' < m$, 
the representation 
$\Sp_{t_{m'+1}}(\lambda^\intal \circ r_{m'+1}) \oplus \cdots \oplus \Sp_{t_m}(\lambda^\intal \circ r_m)$ is a 
direct summand of $\WD(V_\lambda)^\Frss$ as Weil-Deligne representations, \ie,  
there is an isomorphism
\begin{equation}\label{Eqn: Decomposition WD V lambda into W etc}
\WD(V_\lambda)^\Frss\simeq W\oplus\bigoplus_{i={m'+1}}^m \Sp_{t_i}(\lambda^\intal \circ r_i).
\end{equation}
Let $\calW$ denote the Weil-Deligne representation $\oplus _{i=1}^{m'} \Sp_{t_i}(r_i)$. Then 
the sum 
$\sum_{i=m'+1}^m (t_i-t_{m'})\dim \rho_i$ 
is equal to the integer 
$\dim _\calLbar N^{t_{m'}}(\WD(\calV)^\Frss)$ 
(by equation \eqref{Eqn: Decomposition of WD of big V into Sp t r}), 
which is larger than 
$\dim_{\bbQpbar} \lambda (N)^{t_{m'}} (\WD(V_\lambda)^\Frss)$ 
and this is bigger than 
$\dim_{\bbQpbar} \lambda (N)^{t_{m'}} W + \sum_{i=m'+1}^m (t_i-t_{m'}) \dim \rho_i$
(by equation \eqref{Eqn: Decomposition WD V lambda into W etc}). 
So $\lambda (N)^{t_{m'}} (W)=0$. 
Thus conditions (A'), (B'), (C') below hold with $D'=t_{m'}$. 
\begin{enumerate}[(A')]
\item $W$ is pure of weight $w$, 
\item $\lambda^\intal \circ \tr \calW = \tr W$, 
\item $W$ is annihilated by the $D'$-th power of its monodromy where $D'$ denotes the size of $\calW$. 
\end{enumerate}
Using an argument analogous to the proof of the fact that $\Sp_{t_m}(\lambda^\intal \circ r_m)$ is a 
direct summand of $\WD(V_\lambda)^\Frss$, we deduce that 
the Weil-Deligne representation $\Sp_{t_{m'}}(\lambda^\intal \circ r_{m'})$ is a direct summand of $W$. 
Then equation \eqref{Eqn: Decomposition WD V lambda into W etc} shows that 
$\Sp_{t_{m'}}(\lambda^\intal \circ r_{m'}) \oplus \Sp_{t_{m'+1}}(\lambda^\intal \circ r_{m'+1}) \oplus \cdots \oplus \Sp_{t_m}(\lambda^\intal \circ r_m)$ is a 
direct summand of $\WD(V_\lambda)^\Frss$. This proves part (1) by induction. Then part (2) to (5) follows. 

To simplify notations, we assume that $\calK$ is algebraically closed. 
Let $\calO_\mu$ (resp. $\calO_\lambda$) denote the image of $\mu$ (resp. $\lambda$) 
and $\eta: \calO_\mu\to \calO_\lambda$ denote the $\bbZp$-algebra homomorphism such that $\lambda = \eta \circ \mu$. 
Let $\lambda^\dag$ denote the map $\eta^\intal \circ\mu^\intal$. 
By proposition \ref{Prop: rationality in general}, 
there exist positive integers $M, t_1'\leq \cdots \leq t_M'$ and irreducible Frobenius-semisimple representations $s_1, \cdots, s_M$ over $\calO_\mu^\intal[1/p]$ such that 
$\WD(V_\mu)^\Frss$ is isomorphic to $\oplus_{i=1}^M \Sp_{t_i'} (s_i)$. 
By part (1), $\WD(V_\lambda)^\Frss$ is isomorphic to 
$\oplus_{i=1}^M \Sp_{t_i'}(\eta^\intal \circ s_i)$. 
Hence $M=m$ and $t_i'=t_i$ for all $1\leq i\leq M$. 
So $\eta^\intal \circ s_i, \lambda^\dag \circ r_i$ are of weight $w+t_i-1$ for all $1\leq i\leq m$. 
Note that for some integer $1\leq j\leq m$ and $0\leq a\leq t_j-1$, the representations $\mu^\intal\circ r_m$ and $s_j|\Art_K^\mo|_K^a$ are isomorphic. 
So the representations $\lambda^\dag \circ r_m, \eta^\intal \circ (s_j|\Art_K^\mo|_K^a)$
are of equal weight. This shows $t_m=t_j-2a$ and hence $a=0, t_j=t_m$. Thus $\Sp_{t_m}(\mu^\intal \circ r_m)$ is a direct summand of 
$\WD(V_\mu)^\Frss$. 
Now suppose that for an integer $1\leq m'<m$, the representation $\oplus_{i=m'+1}^m \Sp_{t_i}(\mu^\intal \circ r_i)$ is a direct summand of $\WD(V_\mu)^\Frss$. 
So by proposition \ref{Prop: rationality in general}, there exist irreducible Frobenius-semisimple representations $s_1', \cdots, s_{m'}'$ over $\calO_\mu^\intal[1/p]$  such that 
$\WD(V_\mu)^\Frss$ is isomorphic to $\bigoplus_{i=1}^{m'} \Sp_{t_i}(s_i') \oplus \bigoplus_{i=m'+1}^m \Sp_{t_i}(\mu^\intal \circ r_i)$. 
By part (1), $\WD(V_\lambda)^\Frss$ is isomorphic to 
$\bigoplus_{i=1}^{m'} \Sp_{t_i}(\eta^\intal\circ s_i') \oplus \bigoplus_{i=m'+1}^m \Sp_{t_i}(\eta^\intal \circ \mu^\intal \circ r_i)$.  
So $\eta^\intal \circ s_i', \lambda^\dag \circ r_i$ are of weight $w+t_i-1$ for all $1\leq i\leq m'$. 
Note that for some integer $1\leq k\leq m'$ and $0\leq b\leq t_k-1$, the representations $\mu^\intal\circ r_{m'}$ and $s_k'|\Art_K^\mo|_K^b$ are isomorphic. 
So the representations $\lambda^\dag \circ r_{m'}, \eta^\intal \circ (s_k'|\Art_K^\mo|_K^b)$
are of equal weight. This shows $t_{m'}=t_k-2b$ and hence $b=0, t_k=t_{m'}$. Thus $\Sp_{t_{m'}}(\mu^\intal \circ r_{m'})$ is a direct summand of 
$\oplus_{i=1}^{m'} \Sp_{t_i}(s_i')$ and hence $\oplus_{i=m'}^m \Sp_{t_i}(\mu^\intal \circ r_i)$ is a direct summand of $\WD(V_\mu)^\Frss$. This completes the proof by induction. 

\end{proof}

\section{Purity for pseudorepresentations}
\label{Sec: Purity for pseudorepresentations}

Let $\scrO$ be an integral domain containing $\bbZp$ as a subalgebra. 
We denote its fraction field by $\scrL$. 

\subsection{Preliminaries}
\label{SubSubSec: Preliminaires}
Let $\calO_1, \calO_2$ be integral domains containing $\bbZp$ as a subalgebra. 
We denote their fraction fields by $\calL_1, \calL_2$ respectively. 
Let $\res_1:\scrO \hra \calO_1$, $\res_2:\scrO\hra \calO_2$ be injective $\bbZp$-algebra homomorphisms. 
Let $T_0: W_K \to \scrO^\intal[1/p]$ be a pseudorepresentation of dimension $d\geq 1$ and 
$(r_1, N_1):W_K \to \gln_d(\calO_1^\intal[1/p])$, 
$(r_2, N_2): W_K \to \gln_d(\calO_2^\intal[1/p])$ be 
Weil-Deligne representations such that 
\begin{equation}
\label{Eqn: tr r U r V}
\res_1^\intal \circ T_0 = \tr(r_1), \quad 
\res_2^\intal \circ T_0 = \tr(r_2).
\end{equation}
Suppose that there exist $\bbZp$-algebra homomorphisms $f_1: \calO_1^\intal \to \bbQpbar$, 
$f_2: \calO_2^\intal \to \bbQpbar$ such that 
$f_1\circ (r_1, N_1), f_2\circ (r_2, N_2)$ are pure. 
We first state two propositions. Then we prove a lemma which will be used to establish these propositions. 
For the notion of size, we refer to definition \ref{Defn: size}. 
\begin{myproposition}
\label{Prop: size smaller}
The size of $(f_1 \circ (r_1, N_1))^\Frss$ is smaller than the 
size of $(f_2\circ (r_2, N_2))^\Frss$. 
Consequently, these two representations have the same size. 
\end{myproposition}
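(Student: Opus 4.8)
The plan is to reduce, via Theorem~\ref{Thm: purity big}, to a comparison of the ``generic'' sizes of $(r_1,N_1)$ and $(r_2,N_2)$, and then to play the common pseudorepresentation $T_0$ off against the rigidity of pure Weil--Deligne representations.

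First I would regard $(r_i,N_i)$ as a big Galois representation with coefficient domain $\calO_i^\intal[1/p]$. As $\tr r_i=\res_i^\intal\circ T_0$ has values in $\res_i^\intal(\scrO^\intal[1/p])$ and $T_0$ has dimension $d$, the characteristic polynomials of $r_i$ have coefficients in $\scrO^\intal[1/p]$; so Proposition~\ref{Prop: rationality in general} gives
\[
\big((r_i,N_i)\otimes_{\calO_i^\intal[1/p]}\overline{\calL_i}\big)^\Frss\;\simeq\;\bigoplus_{a}\Sp_{t^{(i)}_a}\!\big(\chi^{(i)}_a\otimes\rho^{(i)}_a\big),
\]
with $\chi^{(i)}_a$ unramified characters valued in $(\scrO^\intal[1/p])^\times$ and $\rho^{(i)}_a$ irreducible of finite image over $\bbQbar$. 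Since $f_i\circ(r_i,N_i)$ is pure by hypothesis, Theorem~\ref{Thm: purity big} applies and yields $(f_i\circ(r_i,N_i))^\Frss\simeq\bigoplus_a\Sp_{t^{(i)}_a}(f_i^\intal\circ(\chi^{(i)}_a\otimes\rho^{(i)}_a))$, so the size $D_i$ of $(f_i\circ(r_i,N_i))^\Frss$ equals $\max_a t^{(i)}_a$. It therefore suffices to prove $D_1\le D_2$; the reverse inequality, and hence equality, will follow from the symmetry of the hypotheses in the two data.

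Next, since $\overline{\scrL}$ is algebraically closed of characteristic zero and $T_0$ is a $d$-dimensional pseudorepresentation, there is a semisimple $W_K$-representation $\calM$ over $\overline{\scrL}$ with $\tr\calM=T_0$, unique up to isomorphism, of the form $\bigoplus_a\chi_a\otimes\rho_a$ with $\chi_a$ unramified valued in $(\scrO^\intal[1/p])^\times$ and $\rho_a$ of finite image over $\bbQbar$. Because $\tr r_i=\res_i^\intal\circ T_0$, one has $r_i^{\ssi}\simeq\calM\otimes_{\overline{\scrL}}\overline{\calL_i}$, and---using that a semisimple representation over an algebraically closed field is determined by its trace, and that a trace valued in an algebraically closed subfield detects descent---the two decompositions above descend to two isomorphisms of $W_K$-representations over $\overline{\scrL}$,
\[
\calM\;\simeq\;\bigoplus_{a}\bigoplus_{k=0}^{t^{(1)}_a-1}\big(\chi^{(1)}_a\otimes\rho^{(1)}_a\big)|\Art_K^\mo|_K^{k}\;\simeq\;\bigoplus_{b}\bigoplus_{k=0}^{t^{(2)}_b-1}\big(\chi^{(2)}_b\otimes\rho^{(2)}_b\big)|\Art_K^\mo|_K^{k}.
\]
Put $g_i=f_i\circ\res_i^\intal\colon\scrO^\intal[1/p]\to\bbQpbar$. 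Using Theorem~\ref{Thm: purity big} and Remark~\ref{Remark: Central summands determines purity}, for each irreducible constituent $\tau$ of $\calM$ (given, via the $i$-th decomposition, as a $|\Art_K^\mo|_K$-twist of some $\chi^{(i)}_a\otimes\rho^{(i)}_a$) the weight $\mathrm{wt}_i(\tau)$ of the corresponding constituent of the pure representation $(f_i\circ(r_i,N_i))^\Frss$ is a well-defined integer, and each $\Sp$-chain of the $i$-th decomposition contributes to the $\mathrm{wt}_i$-values a symmetric chain $\{w_i-s,w_i-s+2,\dots,w_i+s\}$ centered at $w_i$, the weight of $f_i\circ(r_i,N_i)$. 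Twisting by $|\Art_K^\mo|_K$ shifts both $\mathrm{wt}_1$ and $\mathrm{wt}_2$ by $-2$ (purity of $f_i\circ(r_i,N_i)$ being exactly what makes this consistent across different $\Sp$-chains of the $i$-th decomposition), so $\mathrm{wt}_1-\mathrm{wt}_2$ is constant on each $|\Art_K^\mo|_K$-twist class of constituents of $\calM$.

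The combinatorial core, which I expect to be the main obstacle, is to extract $D_1\le D_2$ from this. Pick a twist class $[\tau_0]$ containing an $\Sp$-chain of the first decomposition of maximal length $D_1$. Inside $[\tau_0]$ the $\mathrm{wt}_1$-values of the constituents of $\calM$---being a disjoint union of symmetric chains centered at $w_1$, the longest of length $D_1$---fill the interval $\{w_1-(D_1-1),\dots,w_1+(D_1-1)\}$; translating by the constant value of $\mathrm{wt}_1-\mathrm{wt}_2$ on $[\tau_0]$, the $\mathrm{wt}_2$-values of the constituents of $\calM$ in $[\tau_0]$ fill an interval of width $2(D_1-1)$. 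On the other hand, by the second decomposition these $\mathrm{wt}_2$-values form a disjoint union of symmetric chains centered at $w_2$, and a union of symmetric chains centered at $w_2$ filling an interval of width $2(D_1-1)$ must include a chain of length $\ge D_1$. Hence $D_2=\max_b t^{(2)}_b\ge D_1$, as wanted. The crux is this last step: the two a priori different $\Sp$-decompositions of $\calM$ get ``pinned'' against each other by purity, the mechanism being that $\mathrm{wt}_1-\mathrm{wt}_2$ is constant on twist classes while a nonempty multiset of integers is symmetric about at most one point---so that a symmetric-chain decomposition about a prescribed center is unique, and in fact running the argument in every twist class shows the two decompositions of $\calM$ coincide (only the inequality of maximal lengths being needed here). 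The remaining points---descent of the $\chi^{(i)}_a,\rho^{(i)}_a$ to $\overline{\scrL}$, the identification $r_i^{\ssi}\simeq\calM\otimes\overline{\calL_i}$, and recasting $(r_i,N_i)$ so that Theorem~\ref{Thm: purity big} applies---are routine, using again that semisimple representations over algebraically closed fields of characteristic zero are determined by their traces.
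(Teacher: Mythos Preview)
Your argument is correct and reaches the conclusion, but it takes a more structural route than the paper's proof, which is worth noting.

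The paper establishes a sharper auxiliary lemma (Lemma~\ref{Lemma: Eqn lemma}): writing $((r_2,N_2)\otimes\calLbar_2)^\Frss\simeq\bigoplus_i\Sp_{s_i}(\vartheta_i)$, it shows that the top and bottom constituents $\theta_{\kappa 1},\theta_{\kappa t_\kappa}$ of a longest $\Sp$-chain in the first decomposition must, after $\res_2^\intal$, land precisely at the top and bottom of $\Sp$-chains in the second decomposition, and then a direct weight computation via $f_2$ yields the equality $2t_\kappa=s_a+s_b\le 2s_k$. Proposition~\ref{Prop: size smaller} drops out immediately from this inequality together with equations \eqref{Eqn: rho y general} and \eqref{Eqn: rho z general}. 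The virtue of this approach is that the same lemma is reused to prove Proposition~\ref{Prop: obtain summand}: from $t_\kappa=s_k$ one gets $s_a=s_b=s_k$, hence $\Sp_{t_\kappa}(\res_2^\intal\circ\theta_{\kappa 1})$ is a genuine direct summand of $((r_2,N_2)\otimes\calLbar_2)^\Frss$.

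Your approach instead descends both $\Sp$-decompositions to the common field $\overline{\scrL}$ and compares the two induced weight functions $\mathrm{wt}_1,\mathrm{wt}_2$ on constituents of $\calM$, using that their difference is constant on $|\Art_K^\mo|_K$-twist classes and that each weight multiset, restricted to a twist class, is a union of chains symmetric about a fixed center. This is a clean packaging of the same purity constraint; indeed, as you observe, it actually shows the two $\Sp$-decompositions coincide twist-class by twist-class, which is stronger than the bare inequality of sizes. The cost is a bit more setup (the descent of the $\chi_a^{(i)},\rho_a^{(i)}$ to $\overline{\scrL}$ via Proposition~\ref{Prop: rationality in general} and Newton's identities, and the well-definedness of $\mathrm{wt}_i(\tau)$, which you handle correctly). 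The paper's route, by isolating the extremal constituents directly, is slightly more economical and feeds straight into the next proposition; your route gives a more global picture but would need a small extra step to recover Proposition~\ref{Prop: obtain summand} in the exact form the paper uses downstream.
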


Let $\kappa, t_1\leq  \cdots \leq t_\kappa$ be positive integers 
and $\theta_{11}, \cdots, \theta_{1t_1}$, 
$\theta_{21}, \cdots, \theta_{2t_2}$, $\cdots$, 
$\theta_{\kappa 1}, \cdots, \theta_{\kappa t_\kappa}$ be 
irreducible Frobenius-semisimple representations of $W_K$ over $\scrO^\intal[1/p]$ 
such that 
\begin{enumerate}
\item $T_0$ is equal to $\sum_{i=1}^\kappa \sum_{j=1}^{t_\kappa} \tr \theta_{ij}$, 
\item for any $1\leq i\leq \kappa, 1\leq j\leq t_i$, the representations 
$\res^\intal_1 \circ  \theta_{ij}$, 
$\res^\intal_1 \circ ( |\Art^\mo_K|_K^{j-1} \theta_{i1})$ of $W_K$ are isomorphic 
over $\calLbar_1$ and 
\item there is an isomorphism 
\begin{equation}
\label{Eqn: decomposition of rho U tilde}
((r_1, N_1) \otimes_{\calO_1} \calLbar_1 )^\Frss
\simeq 
\bigoplus_{i=1}^\kappa \Sp_{t_i} (\res^\intal_1\circ \theta_{i1}) .
\end{equation}
\end{enumerate}

\begin{myproposition}
\label{Prop: obtain summand}
The representation 
$\Sp_{t_\kappa} (\res^\intal_2\circ \theta_{\kappa 1})$ 
is a direct summand of $((r_2, N_2) \otimes_{\calO_2} \calLbar_2)^\Frss$ 
as Weil-Deligne representations. 
\end{myproposition}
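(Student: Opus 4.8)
The plan is to reproduce the weight–extraction argument from the proof of Theorem~\ref{Thm: purity big}, carried out on the pure specialization of $(r_2,N_2)$ and then transported back to $\calLbar_2$. First I would fix, via Theorem~\ref{Thm: Structure of Frob ss Weil-Deligne representations} and Proposition~\ref{Prop: rationality in general}, a decomposition $((r_2,N_2)\otimes_{\calO_2}\calLbar_2)^\Frss\simeq\bigoplus_{k=1}^n\Sp_{s_k}(\psi_k\otimes\sigma_k)$ with $s_1\leq\dots\leq s_n$, where $\psi_k$ is an unramified $(\calO_2^\intal[1/p])^\times$-valued character and $\sigma_k$ has finite image. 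Applying Theorem~\ref{Thm: purity big} to the monodromic $W_K$-representation over the domain $\calO_2^\intal[1/p]$ corresponding to $(r_2,N_2)$, with the pure specialization $f_2$, yields $(f_2\circ(r_2,N_2))^\Frss\simeq\bigoplus_k\Sp_{s_k}(f_2^\intal\circ(\psi_k\otimes\sigma_k))$, pure of some weight $w$ and of size $\max_k s_k=s_n$, and by Remark~\ref{Remark: Central summands determines purity} each $f_2^\intal\circ(\psi_k\otimes\sigma_k)$ is pure of weight $w+s_k-1$. The same theorem applied to $(r_1,N_1)$ and $f_1$ gives $(f_1\circ(r_1,N_1))^\Frss\simeq\bigoplus_i\Sp_{t_i}(f_1^\intal\circ(\res^\intal_1\circ\theta_{i1}))$, of size $t_\kappa$, so Proposition~\ref{Prop: size smaller} forces $s_n=t_\kappa$; in particular $s_k\leq t_\kappa$ for every $k$.

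Next I would identify the Jordan--H\"older constituents through $T_0$. Since $\res^\intal_1$ is injective and $\res^\intal_1\circ\theta_{ij}\simeq\res^\intal_1\circ(|\Art_K^\mo|_K^{j-1}\theta_{i1})$, one has $\tr\theta_{ij}=\tr(|\Art_K^\mo|_K^{j-1}\theta_{i1})$ in $\scrO^\intal[1/p]$, hence $\res^\intal_2\circ\theta_{ij}\simeq|\Art_K^\mo|_K^{j-1}(\res^\intal_2\circ\theta_{i1})$ over $\calLbar_2$, each factor being irreducible Frobenius-semisimple by Lemma~\ref{Lemma: irreducible Frobenius semisimple under specializations}. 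Comparing $\tr((r_2,N_2)\otimes\calLbar_2)^\Frss=\res^\intal_2\circ T_0=\sum_{i,j}\tr(\res^\intal_2\circ\theta_{ij})$ and using linear independence of the characters of pairwise non-isomorphic irreducible $\calLbar_2[W_K]$-modules, the multiset of constituents $\{(\psi_k\otimes\sigma_k)|\Art_K^\mo|_K^{l}:1\leq k\leq n,\ 0\leq l<s_k\}$ of $((r_2,N_2)\otimes\calLbar_2)^\Frss$ equals $\bigsqcup_{i=1}^\kappa\{|\Art_K^\mo|_K^{j-1}(\res^\intal_2\circ\theta_{i1}):1\leq j\leq t_i\}$, and applying $f_2^\intal$ gives the corresponding description of the constituents of $(f_2\circ(r_2,N_2))^\Frss$.

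The heart of the proof is then the weight count. In $(f_2\circ(r_2,N_2))^\Frss$, pure of weight $w$ and of size $t_\kappa$, every constituent has weight in $[w-t_\kappa+1,\,w+t_\kappa-1]$, and both endpoints are attained. The constituents $f_2^\intal\circ(\res^\intal_2\circ\theta_{\kappa j})=|\Art_K^\mo|_K^{j-1}(f_2^\intal\circ\res^\intal_2\circ\theta_{\kappa 1})$ for $1\leq j\leq t_\kappa$ have weights $b,\,b-2,\,\dots,\,b-2(t_\kappa-1)$, where $b$ is the weight of $f_2^\intal\circ\res^\intal_2\circ\theta_{\kappa 1}$; these span an interval of length exactly $2(t_\kappa-1)$, so they fit into $[w-t_\kappa+1,\,w+t_\kappa-1]$ only if $b=w+t_\kappa-1$. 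Writing $\res^\intal_2\circ\theta_{\kappa 1}\simeq(\psi_{k_0}\otimes\sigma_{k_0})|\Art_K^\mo|_K^{l_0}$ (legitimate by the constituent matching) and applying $f_2^\intal$, the weight equality reads $w+t_\kappa-1=(w+s_{k_0}-1)-2l_0$, i.e.\ $t_\kappa=s_{k_0}-2l_0\leq s_{k_0}\leq t_\kappa$, forcing $l_0=0$ and $s_{k_0}=t_\kappa$. Therefore $\res^\intal_2\circ\theta_{\kappa 1}\simeq\psi_{k_0}\otimes\sigma_{k_0}$ over $\calLbar_2$, and $\Sp_{t_\kappa}(\res^\intal_2\circ\theta_{\kappa 1})\simeq\Sp_{s_{k_0}}(\psi_{k_0}\otimes\sigma_{k_0})$ is a direct summand of $((r_2,N_2)\otimes_{\calO_2}\calLbar_2)^\Frss$, as required.

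The main obstacle I anticipate is keeping the bookkeeping coherent across the two coefficient fields $\calLbar_1,\calLbar_2$: since \emph{weight} is meaningful only over $\bbQpbar$, the structural comparison must be made on the pure specializations $(f_i\circ(r_i,N_i))^\Frss$ and only afterwards pulled back to the $\calLbar_i$-level through Theorem~\ref{Thm: purity big}; moreover the equality $\max_k s_k=t_\kappa$ — without which the relevant weight interval has the wrong length — genuinely rests on purity at \emph{both} specializations, i.e.\ on Proposition~\ref{Prop: size smaller}, and not merely on the coincidence of the traces of $(r_1,N_1)$ and $(r_2,N_2)$.
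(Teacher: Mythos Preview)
Your proof is correct and is in essence the same weight–extraction argument the paper uses, just packaged differently. The paper factors the work through Lemma~\ref{Lemma: Eqn lemma}: there it is first shown, using purity on the $f_1$ side, that no $\theta_{ij}$ is a twist of $\theta_{\kappa 1}$ by a \emph{negative} power of $|\Art_K^{-1}|_K$, which forces $\res_2^\intal\circ\theta_{\kappa 1}\simeq\vartheta_b$ (top of a block) and $\res_2^\intal\circ\theta_{\kappa t_\kappa}\simeq\vartheta_a|\Art_K^{-1}|_K^{s_a-1}$ (bottom of a block); comparing weights on the $f_2$ side then yields $2t_\kappa=s_a+s_b$. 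Proposition~\ref{Prop: obtain summand} is then a two-line consequence: $t_\kappa=s_k$ from Proposition~\ref{Prop: size smaller} forces $s_a=s_b=t_\kappa$. You instead work entirely on the $f_2$ side, using the interval constraint (all constituent weights lie in $[w-t_\kappa+1,w+t_\kappa-1]$, an interval whose length $2(t_\kappa-1)$ equals the spread of the weights of $f_2^\intal\circ\res_2^\intal\circ\theta_{\kappa j}$) to pin down the weight of $f_2^\intal\circ\res_2^\intal\circ\theta_{\kappa 1}$ directly, and then read off $l_0=0$, $s_{k_0}=t_\kappa$. Both routes are sound; the paper's lemma isolates the intermediate inequality $2t_\kappa\leq 2s_k$ so that it can also feed into Proposition~\ref{Prop: size smaller}, whereas your inline argument is slightly more streamlined for the single goal at hand but takes Proposition~\ref{Prop: size smaller} as a black box.
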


\begin{mylemma}
\label{Lemma: Eqn lemma}
Let $k, s_1 \leq  \cdots \leq s_k$ be positive integers 
and $\vartheta_1, \cdots, \vartheta_k$ be irreducible Frobenius-semisimple 
representations of $W_K$ over $\calO_2^\intal[1/p]$ such that 
\begin{equation}
\label{Eqn: decomposition of rho V tilde}
((r_2, N_2) \otimes_{\calO_2} \calLbar_2) ^\Frss 
\simeq 
\bigoplus_{i=1}^k \Sp_{s_i} (\vartheta_i).
\end{equation}
Then for some integers $1\leq a, b \leq k$, we have 
\begin{equation}
\label{Eqn: top bottom general}
(\res^\intal_2 \circ \theta_{\kappa t_\kappa})_{/\calLbar_2} \simeq (\vartheta_a |\Art^\mo_K|_K ^{s_a-1})_{/\calLbar_2}, 
\quad
(\res^\intal_2 \circ  \theta_{\kappa 1} )_{/\calLbar_2}\simeq (\vartheta_b)_{/\calLbar_2}, 
\end{equation}
\begin{equation}
\label{Eqn: inequalities general}
2t_\kappa= s_a+s_b\leq 2s_k.
\end{equation}

\end{mylemma}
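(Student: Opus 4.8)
The plan is to extract the claimed isomorphisms and inequalities from the interplay between three facts: the purity of $f_2 \circ (r_2, N_2)$, the trace identity \eqref{Eqn: tr r U r V} (together with conditions (1)--(3) on the $\theta_{ij}$), and the structure theorem \ref{Thm: Structure of Frob ss Weil-Deligne representations} as refined by Proposition \ref{Prop: rationality in general}. First I would pass to the common base field: applying $f_2^\intal$ to \eqref{Eqn: decomposition of rho V tilde} gives a decomposition of the pure Weil-Deligne representation $(f_2 \circ (r_2, N_2))^\Frss$ as $\bigoplus_{i=1}^k \Sp_{s_i}(f_2^\intal \circ \vartheta_i)$, and by Lemma \ref{Lemma: irreducible Frobenius semisimple under specializations} each $f_2^\intal \circ \vartheta_i$ is again irreducible Frobenius-semisimple, so this is a decomposition of the kind in Theorem \ref{Thm: Structure of Frob ss Weil-Deligne representations}. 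By Remark \ref{Remark: Central summands determines purity}, purity of weight (say) $w$ forces each $\phi$-eigenvalue on $f_2^\intal \circ (\vartheta_i |\Art_K^\mo|_K^{(s_i-1)/2})$ to be a $q$-Weil number of weight $w$; equivalently, the summand $\Sp_{s_i}(f_2^\intal \circ \vartheta_i)$ occupies the weight range $[w-(s_i-1), w+(s_i-1)]$ with its top constituent $f_2^\intal \circ \vartheta_i$ of weight $w + s_i - 1$ and its bottom constituent of weight $w - (s_i-1)$.

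**Locating the top and bottom constituents.**
Next I would compare constituents. The multiset of irreducible Frobenius-semisimple constituents of $((r_2,N_2)\otimes \calLbar_2)^\Frss$ is, on the one hand, $\bigcup_{i=1}^k \{\vartheta_i, \vartheta_i|\Art_K^\mo|_K, \dots, \vartheta_i|\Art_K^\mo|_K^{s_i-1}\}$ by \eqref{Eqn: decomposition of rho V tilde}; on the other hand, it must match the multiset obtained by applying $\res_2^\intal$ to the constituents of $((r_1,N_1)\otimes \calLbar_1)^\Frss$, because both have trace $\res_2^\intal \circ T_0$ (use \eqref{Eqn: tr r U r V} and condition (1), together with the Brauer-Nesbitt-type uniqueness in Theorem \ref{Thm: Structure of Frob ss Weil-Deligne representations}). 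By condition (2) and \eqref{Eqn: decomposition of rho U tilde}, the constituents coming from the $\Sp_{t_\kappa}(\res_1^\intal \circ \theta_{\kappa 1})$ block are $\res_1^\intal \circ (|\Art_K^\mo|_K^{j-1}\theta_{\kappa 1})$ for $0 \le j-1 \le t_\kappa - 1$, i.e. $\res_1^\intal \circ \theta_{\kappa j}$ for $j=1,\dots,t_\kappa$; applying $\res_2^\intal$ (and using that $\theta_{\kappa 1}, \dots, \theta_{\kappa t_\kappa}$ are defined over $\scrO^\intal[1/p]$ so their specializations via $\res_1$ and $\res_2$ are comparable constituents of the same pseudorepresentation) the constituents $\res_2^\intal \circ \theta_{\kappa 1}$ and $\res_2^\intal \circ \theta_{\kappa t_\kappa} = \res_2^\intal \circ (|\Art_K^\mo|_K^{t_\kappa-1}\theta_{\kappa 1})$ appear among the $\vartheta_i |\Art_K^\mo|_K^{j-1}$. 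So there exist $b$ with $\res_2^\intal \circ \theta_{\kappa 1} \simeq \vartheta_b |\Art_K^\mo|_K^{c}$ for some $0 \le c \le s_b - 1$, and $a$ with $\res_2^\intal \circ \theta_{\kappa t_\kappa} \simeq \vartheta_a |\Art_K^\mo|_K^{c'}$ for some $0 \le c' \le s_a - 1$. To upgrade these to \eqref{Eqn: top bottom general} I would invoke the maximality of $t_\kappa$: since $\Sp_{t_\kappa}$ has the largest $\Sp$-length among the first-family blocks and $(f_1 \circ (r_1, N_1))^\Frss$ is pure with the same $w$ (use the hypotheses of \S\ref{SubSubSec: Preliminaires}, that $f_1 \circ (r_1, N_1)$ is pure, and Proposition \ref{Prop: size smaller}, which identifies the two sizes), the constituent $\res_1^\intal \circ \theta_{\kappa 1}$ is a highest-weight element of the constituent-multiset of $((r_1,N_1)\otimes\calLbar_1)^\Frss$ of weight $w + t_\kappa - 1$, and $\res_1^\intal \circ \theta_{\kappa t_\kappa}$ is a lowest-weight element of weight $w - (t_\kappa - 1)$ — this is exactly the weight-bookkeeping argument already carried out in the proof of Theorem \ref{Thm: purity big} (part (1)). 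Transporting these weights across the trace identity to the $\vartheta$-side, $\vartheta_b |\Art_K^\mo|_K^c$ has weight $w + t_\kappa - 1$ and lies in $\Sp_{s_b}(\vartheta_b)$ whose top weight is $w + s_b - 1$, forcing $c = s_b - 1 - (t_\kappa - 1) \le 0$ hence $c = 0$ and $s_b \ge t_\kappa$; symmetrically $c' = s_a - 1$ and $s_a \ge t_\kappa$. This yields \eqref{Eqn: top bottom general}.

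**The inequalities.**
Finally, for \eqref{Eqn: inequalities general}: the constituents $\res_2^\intal \circ \theta_{\kappa 1} \simeq \vartheta_b$ and $\res_2^\intal \circ \theta_{\kappa t_\kappa} \simeq \vartheta_a |\Art_K^\mo|_K^{s_a-1}$ have weights $w + t_\kappa - 1$ and $w - (t_\kappa - 1)$ on the $\theta$-side but weights $w_{\vartheta_b}$ and $w_{\vartheta_a} - 2(s_a - 1)$ on the $\vartheta$-side, where I write $w_{\vartheta_i}$ for the weight of $f_2^\intal \circ \vartheta_i$. Writing the purity weight of the $i$-th block as forcing $w_{\vartheta_i} = w + s_i - 1$ (top constituent), I get $w + s_b - 1 = w + t_\kappa - 1$ and $w + s_a - 1 - 2(s_a-1) = w - (t_\kappa - 1)$, i.e. $s_b = t_\kappa$ and $s_a = t_\kappa$, so $2t_\kappa = s_a + s_b$; and $s_a, s_b \le s_k = \max_i s_i$ gives $s_a + s_b \le 2 s_k$. (If one prefers to avoid assuming all $k$ blocks have weight exactly $w$ at this stage, the same conclusion follows from the two one-sided bounds $s_a \ge t_\kappa$, $s_b \ge t_\kappa$ derived above together with the observation that $\Sp_{s_a}(\vartheta_a) \oplus \Sp_{s_b}(\vartheta_b)$ being a pure summand of width $\le 2(s_k - 1)$ forces $s_a, s_b \le s_k$ and then the weight of the specific constituents pins $s_a = s_b = t_\kappa$.)

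**Main obstacle.**
The delicate point is the bookkeeping that matches constituents of $((r_1,N_1)\otimes\calLbar_1)^\Frss$ with those of $((r_2,N_2)\otimes\calLbar_2)^\Frss$ purely through the common pseudorepresentation $T_0$ defined over $\scrO^\intal[1/p]$ — one must be careful that "same trace" gives "same multiset of irreducible constituents" (Brauer--Nesbitt over the relevant field, via Theorem \ref{Thm: Structure of Frob ss Weil-Deligne representations}) and that the unramified twists $|\Art_K^\mo|_K$ behave compatibly under $\res_1^\intal$ and $\res_2^\intal$. Everything else is the weight arithmetic already rehearsed in the proof of Theorem \ref{Thm: purity big}, now applied on the $\vartheta$-side.
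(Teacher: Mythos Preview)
There are two genuine gaps. First, you invoke Proposition~\ref{Prop: size smaller} (``which identifies the two sizes''), but in the paper Proposition~\ref{Prop: size smaller} is \emph{deduced from} this lemma (via equations~\eqref{Eqn: inequalities general}, \eqref{Eqn: rho y general}, \eqref{Eqn: rho z general}), so citing it here is circular.

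Second, and more fundamentally, the step ``transporting these weights across the trace identity to the $\vartheta$-side'' is not valid. The extremality of $\theta_{\kappa 1}$ (highest weight) and $\theta_{\kappa t_\kappa}$ (lowest weight) is established after specializing via $f_1^\intal \circ \res_1^\intal$, while the weights on the $\vartheta$-side are computed via $f_2^\intal$. Since $f_1^\intal \circ \res_1^\intal$ and $f_2^\intal \circ \res_2^\intal$ are in general different $\bbZp$-algebra maps $\scrO^\intal[1/p] \to \bbQpbar$, there is no reason for $f_2^\intal \circ \res_2^\intal \circ \theta_{\kappa 1}$ to have maximal weight among the $f_2$-specialized constituents, and you cannot conclude $c=0$ (or $c'=s_a-1$) this way. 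The same conflation of the two $w$'s recurs in your derivation of $s_a = s_b = t_\kappa$, which in any case overproves: the lemma only asserts $2t_\kappa = s_a + s_b$.

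The paper's argument sidesteps this by first converting $f_1$-purity into a statement living purely over $\scrLbar$: no $\theta_{ij}$ is isomorphic to $\theta_{\kappa t_\kappa}|\Art_K^\mo|_K^\nu$ (or to $\theta_{\kappa 1}|\Art_K^\mo|_K^{-\nu}$) for any $\nu \geq 1$. Then on the $r_2$-side, having matched $\res_2^\intal \circ \theta_{\kappa t_\kappa} \simeq \vartheta_a |\Art_K^\mo|_K^{j_1}$, one notes that the bottom constituent $\vartheta_a |\Art_K^\mo|_K^{s_a-1}$ of that block is also $\res_2^\intal \circ \theta_{i'j'}$ for some $(i',j')$; the \emph{injectivity of $\res_2$} (which you never exploit) then lifts this to $\theta_{i'j'} \simeq \theta_{\kappa t_\kappa} |\Art_K^\mo|_K^{s_a-1-j_1}$ over $\scrLbar$, and the $\scrLbar$-statement forces $s_a-1-j_1 \leq 0$, hence $j_1 = s_a - 1$. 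The identity $2t_\kappa = s_a+s_b$ is then obtained by computing the $f_2$-weight difference of $\vartheta_b$ and $\vartheta_a|\Art_K^\mo|_K^{s_a-1}$ in two ways, again using only that $\theta_{\kappa t_\kappa} \simeq |\Art_K^\mo|_K^{t_\kappa-1}\theta_{\kappa 1}$ over $\scrLbar$ (via injectivity of $\res_1$). The key idea you are missing is precisely this lifting via injectivity of $\res_2$, which transfers information between the two specializations without ever comparing their weights directly.
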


\begin{proof}
By lemma \ref{Lemma: irreducible Frobenius semisimple under specializations}, 
$\res_1^\intal \circ \theta_{i1}$ is an irreducible Frobenius-semisimple representation 
of $W_K$ over $\calO_1^\intal[1/p]$. 
Since $f_1 \circ (r_1, N_1)$ is pure, theorem \ref{Thm: purity big} and 
equation \eqref{Eqn: decomposition of rho U tilde} give 
\begin{equation}
\label{Eqn: rho y general}
(f_1 \circ (r_1, N_1))^\Frss \simeq 
\bigoplus_{i=1}^\kappa \Sp_{t_i} (f_1 \circ \res^\intal_1\circ \theta_{i1}).
\end{equation}
Since $t_1\leq \cdots\leq t_\kappa$ and $f_1\circ (r_1, N_1)$ is pure, 
by equation \eqref{Eqn: rho y general}, 
no eigenvalue of $\phi$ on $f_1 \circ (r_1, N_1)$ has weight strictly more (resp. less) than 
the weight of the $\phi$-eigenvalues on $\theta_{\kappa 1}$ (resp. $\theta_{\kappa t_\kappa}$). 
So there are no integers $i, j$ with $1\leq i\leq \kappa, 1\leq j\leq t_i$ 
such that 
$\theta_{ij}$ is isomorphic to $\theta_{\kappa 1}|\Art^\mo_K|_K ^{-\nu}$ or $\theta_{\kappa t_\kappa}|\Art^\mo_K|_K^\nu$ for some integer $\nu\geq 1$. 
Note that by lemma \ref{Lemma: irreducible Frobenius semisimple under specializations}, 
there exist integers $1\leq a, b\leq k$ such that 
the $W_K$-representation 
$\res^\intal_2\circ \theta_{\kappa \alpha_\kappa}$ (resp. $\res^\intal_2\circ \theta_{\kappa 1}$) 
is isomorphic to $\vartheta_a|\Art^\mo_K|_K ^{j_1}$ (resp. $\vartheta_b|\Art^\mo_K|_K ^{j_2}$) 
over $\calLbar_2$ where $0\leq j_1\leq s_a-1$ (resp. $0\leq j_2\leq s_b-1$). 
Now for some $1\leq i\leq \kappa, 1\leq j\leq t_i$, 
the $W_K$-representations $\res^\intal_2\circ \theta_{ij}, \vartheta_a|\Art^\mo_K|_K ^{s_a-1}
= (\res^\intal_2\circ \theta_{\kappa \alpha_\kappa})|\Art^\mo_K|_K ^{s_a-1-j_1}$ 
are isomorphic over $\calLbar_2$. 
As $\res_2$ is injective and the traces of the representations  
$\theta_{ij}$ and $|\Art^\mo_K|_K ^{s_a-1-j_1} \theta_{\kappa \alpha_\kappa}$ coincide after 
composing them with $\res^\intal_2$, 
these representations are isomorphic over $\scrLbar$ (by \cite[Chapter 1, \S 2]{SerreAbelianEllAdic} for instance).
As noted before, $s_a-1-j_1$ cannot be positive. So 
$j_1$ is equal to $s_a-1$. Similarly $j_2$ is zero. Thus  
equation \eqref{Eqn: top bottom general} holds. 

Let $w$ denote the weight of the pure representation $f_2\circ (r_2, N_2)$. 
By theorem \ref{Thm: purity big} and 
equation \eqref{Eqn: decomposition of rho V tilde}, 
\begin{equation}
\label{Eqn: rho z general}
(f_2 \circ (r_2, N_2))^\Frss \simeq 
\bigoplus_{i=1}^k \Sp_{s_i}(f_2 \circ \vartheta_i).
\end{equation}
So the weight of any $\phi$-eigenvalue on 
$f_2 \circ \vartheta_b$ (resp. $f_2\circ \vartheta_a |\Art^\mo_K|_K ^{s_a-1}$) 
is equal to $w+(s_b-1)$ (resp. $w-(s_a-1)$). 
So their difference, denoted $\delta$, is equal to $s_a+s_b-2$. 
On the other hand, since $\theta_{\kappa t_\kappa}$ and $|\Art^\mo_K|_K^{t_\kappa-1}\theta_{\kappa 1}$ 
are isomorphic over $\scrLbar$ (as their traces become equal after composing them with $\res^\intal_1$ and $\res_1$ is injective), by 
equation \eqref{Eqn: top bottom general}, $\delta$ is equal to $2(t_\kappa-1)$. 
Since $s_a, s_b$ are smaller than $s_k$, we get equation \eqref{Eqn: inequalities general}. 
\end{proof}

\begin{proof}[Proof of proposition \ref{Prop: size smaller}]
Equation \eqref{Eqn: inequalities general}, \eqref{Eqn: rho y general}, 
\eqref{Eqn: rho z general} give the first part of proposition 
\ref{Prop: size smaller}. Then the second part follows. 
\end{proof}

\begin{proof}[Proof of proposition \ref{Prop: obtain summand}]
By proposition \ref{Prop: size smaller}, $t_\kappa$ is equal to $s_k$. 
Then equation \eqref{Eqn: inequalities general} gives $s_a=s_b=s_k$. 
So 
$\Sp_{s_b}(\vartheta_b) = \Sp_{s_k}(\res^\intal_2\circ  \theta_{\kappa 1})$ 
is a direct summand of $((r_2, N_2) \otimes_{\calO_2} \calLbar_2)^\Frss$. 
\end{proof}

\subsection{Pseudorepresentations of Weil groups}
\label{SubSubSec: proof theorem 1 5}

Let $A$ be a commutative ring and $R$ be an $A$-algebra. 
Given a pseudorepresentation $T:R \to A$ of dimension $d\geq 1$, 
the degree $d$ monic polynomial 
$P_{x, T}(X)=X^d+(-1)^{d-1}T(x) X^{d-1}+\cdots$ 
(as defined in \cite[\S 1.2.3]{BellaicheChenevierAsterisQUE}) is called 
the characteristic polynomial of $x$ (for $T$). It has coefficients in $A[1/d!]$. 

\begin{mytheorem}
[Purity for pseudorepresentations]
\label{Thm: purity for pseudorepresentations}
Let $\calO$ be an integral domain over $\bbZp$ 
and $\res:\scrO \hra\calO$ be an injective $\bbZp$-algebra homomorphism. 
Let $T: W_K \to \scrO$ be a pseudorepresentation of dimension $n\geq 1$ and let 
$(r, N):W_K \to \gln_n(\calO[1/p])$ be a 
Weil-Deligne representation such that $\res\circ T = \tr r$. 
Suppose $f \circ (r, N)$ is pure for some $\bbZp$-algebra homomorphism $f: \calO \to \bbQpbar$. 
Then there exist positive integers 
$m, t_1\leq t_2\leq \cdots \leq t_m$ and 
irreducible Frobenius-semisimple representations 
$r_1, \cdots, r_m$ of $W_K$ with coefficients in $\scrO^\intal[1/p]$ such that 
the statements (1), (2), (3) hold. 
\begin{enumerate}
\item $T$ is equal to $\sum_{i=1}^m \sum _{j=1}^{t_i} \tr r_i |\Art^\mo_K|_K^{j-1}$. 
\item If there exist an integral domain $\calO'$ over $\bbZp$ 
and a Weil-Deligne representation $(r', N'): W_K\to \gln_n(\calO'[1/p])$ such that 
\begin{itemize}
\item $\res'\circ T = \tr r'$ for some injective $\bbZp$-algebra homomorphism $\res':\scrO \hra \calO'$, 
\item 
$f'\circ (r', N')$ is pure for some $\bbZp$-algebra homomorphism $f': \calO' \to \bbQpbar$, 
\end{itemize}
then for any lift $\res'^\dag$ of $\res'$ and any lift $f'^\dag$ of $f'$, there are isomorphisms 
\begin{align}
((r', N')\otimes_{\calO'}\overline Q(\calO') )^\Frss
&\simeq 
\bigoplus_{i=1}^m \Sp_{t_i} (\res'^\dag \circ r_i), \label{Eqn: Isom 1}\\
(f'\circ (r', N'))^\Frss 
&\simeq \bigoplus_{i=1}^m \Sp_{t_i} (f'^\dag\circ \res'^\dag \circ r_i).\label{Eqn: Isom 2}
\end{align}
\item If the characteristic polynomial $P_{\phi, T}(X)$ of $\phi$ has coefficients in $\scrO^\intal\cap \scrO[1/n!]$, then 
$r_i$ has values in $\scrO^\intal$ whenever $r_i$ is a character for some $1\leq i\leq m$. 
\end{enumerate}
Moreover, if there are positive integers $M, s_1, \cdots, s_M$ and 
irreducible Frobenius-semisimple representations $R_1, \cdots, R_M$ of $W_K$ 
over $\scrO^\intal[1/p]$ such that the statements (1), (2) above hold 
(when $m, t_i, r_i$ are replaced by $M, s_i, R_i$ respectively), then 
$m=M, t_1=s_1, \cdots, t_m=s_M$ and 
there exists a permutation $\sigma$ on $\{1, \cdots, m\}$ such that 
\begin{enumerate}[(i)]
\item $r_{\sigma(i)}$ is isomorphic to $R_i$ over $\scrLbar$ for all $1\leq i\leq m$, 
\item $\{a, a+1, \cdots, b\}$ is stable under the action of $\sigma$ 
whenever $t_{a-1}<t_a=\cdots=t_b<t_{b+1}$ 
for some integers $1\leq a, b \leq m$ (here $t_0:=0, t_{m+1}:=t_m+1$). 
\end{enumerate}
\end{mytheorem}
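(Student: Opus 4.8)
The plan is to prove Theorem~\ref{Thm: purity for pseudorepresentations} in three stages: first produce the data $m, t_i, r_i$ from the single given Weil--Deligne representation $(r,N)$; then show it controls every other pure lift $(r',N')$ of $T$; and finally establish the uniqueness assertion.

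\textbf{Existence of the data and statement (1).} First I would apply Proposition~\ref{Prop: rationality in general} to $(r,N)$ over $\calO$, with $B=\scrO^\intal$ (the traces of $r$ lie in $\res(\scrO^\intal[1/p])$ by hypothesis, hence the characteristic polynomials have coefficients there), to obtain positive integers $m, t_1\leq\cdots\leq t_m$, unramified characters $\chi_i$ valued in $(\scrO^\intal)^\times$, and finite-image irreducible Frobenius-semisimple $\rho_i$ over $\bbQ^\cl\subset\scrO^\intal[1/p]$ with $((r,N)\otimes\overline Q(\calO))^\Frss\simeq\bigoplus_i\Sp_{t_i}(\chi_i\otimes\rho_i)$; set $r_i:=\chi_i\otimes\rho_i$. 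Since $f\circ(r,N)$ is pure, Theorem~\ref{Thm: purity big} (applied to the big representation $(r,N)$ over the domain $\calO$, with $\lambda=f$) gives $(f\circ(r,N))^\Frss\simeq\bigoplus_i\Sp_{t_i}(f^\intal\circ r_i)$, so this is genuinely pure and the $r_i$ are the ``correct'' constituents. Taking traces and using that $\res$ is injective (so trace identities over $\calO$ descend to $\scrO^\intal[1/p]$, cf.\ \cite[Chapter 1, \S 2]{SerreAbelianEllAdic}), $\tr\WD(r)^\Frss=\tr(r)=\res\circ T$ forces $T=\sum_{i=1}^m\sum_{j=1}^{t_i}\tr(r_i|\Art_K^\mo|_K^{j-1})$, which is (1).

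\textbf{Statement (2) and (3).} Given a second pure lift $(r',N')$ over $\calO'$, I would run the induction of \S\ref{SubSubSec: Preliminaires} with $\calO_1=\calO$, $\calO_2=\calO'$, $T_0=T$: by Proposition~\ref{Prop: size smaller} the sizes of $(f\circ(r,N))^\Frss$ and $(f'\circ(r',N'))^\Frss$ agree, and by Proposition~\ref{Prop: obtain summand} $\Sp_{t_m}(\res'^\dag\circ r_m)$ is a direct summand of $((r',N')\otimes\overline Q(\calO'))^\Frss$. Peeling this summand off and re-running the argument on the complement (which is again a pure Weil--Deligne representation whose trace is the corresponding sub-sum of $T$) gives \eqref{Eqn: Isom 1} by descending induction on $m$; then \eqref{Eqn: Isom 2} follows by applying Theorem~\ref{Thm: purity big}(1) to $(r',N')$ over $\calO'$ with $\lambda=f'$. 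For (3): the character constituents among the $r_i$ are the $\chi_i$ with $\dim\rho_i=1$; Proposition~\ref{Prop: rationality in general} already places $\chi_i(\phi)^{\pm1}$ in $\scrO^\intal$ up to a root of unity, and if moreover $P_{\phi,T}(X)$ has coefficients in $\scrO^\intal\cap\scrO[1/n!]$, its roots (which include $\chi_i(\phi)$ times roots of unity coming from $\rho_i$, but $\rho_i$ trivial-determinant-wise is $1$-dimensional with finite image so $\chi_i(\phi)\zeta$ is a root for a root of unity $\zeta$) are integral over $\scrO$, pinning $\chi_i(\phi)\in\scrO^\intal$, hence $r_i=\chi_i$ is $\scrO^\intal$-valued.

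\textbf{Uniqueness.} Suppose $M, s_i, R_i$ is another system satisfying (1) and (2). Applying (2) of the first system to the lift $(r',N')$ built from the $R_i$ (and vice versa), combined with the uniqueness clause of Theorem~\ref{Thm: Structure of Frob ss Weil-Deligne representations}, forces $m=M$ and, after reordering within each block of equal $t_i$, an isomorphism $\res'^\dag\circ r_{\sigma(i)}\simeq\res'^\dag\circ R_i$ over $\overline Q(\calO')$; since the traces then agree after composing with $\res'^\dag$ and $\res'$ is injective, $r_{\sigma(i)}\simeq R_i$ already over $\scrLbar$, giving (i). The block-stability (ii) is immediate because the matching of summands $\Sp_{t_i}(\cdot)$ must preserve the integer $t_i$. \emph{I expect the main obstacle to be the descent-of-twists bookkeeping in statement (2):} one must ensure that stripping off $\Sp_{t_m}(\res'^\dag\circ r_m)$ leaves a Weil--Deligne representation that is again of the form handled by Propositions~\ref{Prop: size smaller}--\ref{Prop: obtain summand} (i.e.\ has a pure specialization and trace equal to the residual sub-sum of $T$), and that the various lifts $\res'^\dag, f'^\dag$ are compatible with the $r_i$ chosen over $\scrO^\intal$ rather than over $\bbQ^\cl$ --- the twisting characters $\chi_i$ live only in $(\scrO^\intal)^\times$, so some care is needed that $\res'^\dag\circ r_i$ is well-defined independently of auxiliary choices.
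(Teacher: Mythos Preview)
Your proposal is correct and tracks the paper's proof closely: Proposition~\ref{Prop: rationality in general} (with $B$ coming from $\res(\scrO)$) to produce the $r_i$ over $\scrO^\intal[1/p]$ and descend~(1) via injectivity of $\res$, the descending induction via Propositions~\ref{Prop: size smaller}--\ref{Prop: obtain summand} for~\eqref{Eqn: Isom 1}, Theorem~\ref{Thm: purity big} for~\eqref{Eqn: Isom 2}, and Theorem~\ref{Thm: Structure of Frob ss Weil-Deligne representations} for uniqueness. For the final clause the paper takes a marginally shorter path: rather than building a new lift from the $R_i$ and checking it has a pure specialization, it applies both systems' statement~(2) directly to the \emph{given} $(r,N)$ over $\calO$, obtaining two decompositions of $((r,N)\otimes\overline Q(\calO))^\Frss$ and reading off $\sigma$ from Theorem~\ref{Thm: Structure of Frob ss Weil-Deligne representations} together with injectivity of $\res^\intal$.
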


\begin{proof}
Let $\calL$ denote the fraction field of $\calO$. 
By proposition \ref{Prop: rationality in general}, 
there exist positive integers $m$, $t_1\leq t_2 \leq \cdots \leq t_m$ and 
irreducible Frobenius-semisimple representations 
$\tau_1, \cdots, \tau_m$ of $W_K$ with coefficients in 
$\calO^\intal[1/p]$ such that 
$((r, N)\otimes_{\calO}\calLbar )^\Frss$ is isomorphic to $\oplus_{i=1}^m \Sp_{t_i}(\tau_i)$. 
Since $\tr r= \res\circ T$, the characteristic polynomial of 
$\tau_i$ has coefficients in $(\res \scrO)^\intal[1/p]$ 
(we consider $Q(\res \scrO)$ as a subfield of $\calL$ and 
thus $(\res \scrO)^\intal$ is a subring of $\calO^\intal$). 
So by proposition \ref{Prop: rationality in general}, 
we may (and do) assume that 
$\tau_i$ has coefficients in $(\res \scrO)^\intal[1/p]$. 
So there exist irreducible Frobenius-semisimple 
representations $r_1, \cdots, r_m$ of $W_K$ 
with coefficients in $\scrO^\intal[1/p]$ such that 
$\res \circ r_1=\tau_1, \cdots, \res \circ r_m=\tau_m$. 
Since 
$T- \sum_{i=1}^m \sum _{j=1}^{t_i} \tr r_{i}|\Art^\mo_K|_K^{j-1}$ 
goes to zero under $\res^\intal$ 
and $\res$ is injective, we get part (1).

Let $\calL'$ denote the fraction field of $\calO'$. 
By proposition \ref{Prop: obtain summand}, 
$\Sp_{t_m}(\res'^\dag \circ r_{m})$ is a direct summand of 
$((r', N') \otimes_{\calO'} \calLbar')^\Frss$. 
Suppose for some $1\leq k< m$, 
$\oplus_{i=k+1}^m \Sp_{t_i}(\res'^\dag \circ r_{i})$ 
is a direct summand of $((r', N')\otimes_{\calO'}\calLbar')^\Frss $. 
We will now show that 
$\oplus_{i=k}^m \Sp_{t_i}(\res'^\dag \circ r_{i})$ 
is a direct summand of $((r', N')\otimes_{\calO'}\calLbar')^\Frss $. 
By proposition \ref{Prop: rationality in general}, there exist
positive integers $Q, s_1\leq \cdots \leq s_Q$ and 
irreducible Frobenius-semisimple 
representations $\eta_1, \cdots, \eta_Q$ 
of $W_K$ with coefficients in $\calO'^\intal[1/p]$ such that 
$((r', N')\otimes_{\calO'}\calLbar')^\Frss$ 
is isomorphic to 
$\bigoplus_{i=1}^ Q 
\Sp_{s_i}(\eta_i)
\oplus
\bigoplus_{i=k+1}^m \Sp_{t_i} (\res'^\dag\circ r_{i})$. 
Note that the specialization of the pseudorepresentation 
$\sum _{i=1}^k \sum_{j=1}^{t_i} \tr r_{i}|\Art^\mo_K|_K^{j-1}:W_K\to \scrO^\intal[1/p]$ 
under $\res^\intal$ (resp. $\res'^\dag$) 
is equal to the trace of the Weil-Deligne representation 
$\oplus_{i=1}^k \Sp_{t_i}(\tau_i)$ 
(resp. $\oplus_{i=1}^Q \Sp_{s_i}(\eta_i)$) 
of $W_K$ with coefficients in 
$\calO^\intal[1/p]$ (resp. $\calO'^\intal [1/p]$). 
So by proposition \ref{Prop: obtain summand}, 
the representation  
$\Sp_{t_k}(\res'^\dag \circ r_k)$ is a direct summand of 
$\oplus_{i=1}^ Q 
\Sp_{s_i}(\eta_i)$. 
This shows that 
$\oplus_{i=k}^m \Sp_{t_i}(\res'^\dag \circ r_{i})$ 
is a direct summand of $((r', N')\otimes_{\calO'}\calLbar')^\Frss $. 
So we obtain equation \eqref{Eqn: Isom 1} by induction. 
Using theorem \ref{Thm: purity big}, we get 
equation \eqref{Eqn: Isom 2}. 
Part (3) is clear. 

To establish the final part, note that 
$((r, N)\otimes_\calO \calLbar)^\Frss$ 
is isomorphic to 
$\oplus_{i=1}^m \Sp_{t_i}(\res^\intal\circ r_i)$ and 
$\oplus_{i=1}^M \Sp_{s_i}(\res^\intal\circ R_i)$. 
This shows that $m=M, t_1=s_1, \cdots, t_m=s_M$. 
By theorem \ref{Thm: Structure of Frob ss Weil-Deligne representations}, 
there exists a permutation $\sigma$ on $\{1, \cdots, m\}$ such that 
condition (ii) above holds and 
$\res^\intal\circ r_{\sigma(i)}$ is isomorphic to $\res^\intal \circ R_i$ for all 
$1\leq i\leq m$. 
Since $\res$ is injective, $\res^\intal$ is also injective. So $r_{\sigma(i)}$ and $R_i$ 
have same traces and hence these are isomorphic over $\scrLbar$ 
(by \cite[Chapter 1, \S 2]{SerreAbelianEllAdic} for instance).
\end{proof}

\subsection{Pure specializations of pseudorepresentations of global Galois groups}
\label{SubSec: purity sum}
Given a local ring $(A, \frakm)$, we denote its Henselization by $(A^h, \frakm^h)$ 
(see \cite[\href{http://stacks.math.columbia.edu/tag/04GQ}{Tag 04GQ}]{StacksProject}) 
and consider their residue fields to be equal via the isomorphism $A/\frakm \to A^h/\frakm^h$ 
(see \cite[\href{http://stacks.math.columbia.edu/tag/04GN}{Tag 04GN}]{StacksProject}). 
Since the map $A\to A^h$ is flat (by 
\cite[\href{http://stacks.math.columbia.edu/tag/07QM}{Tag 07QM}]{StacksProject} for instance) 
and flat maps satisfy going down property 
(see \cite[\href{http://stacks.math.columbia.edu/tag/00HS}{Tag 00HS}]{StacksProject}), 
the minimal primes of $A^h$ go to the minimal primes of $A$ under the inverse of the map $A\to A^h$. 
Given a prime ideal $\frakp$ of a ring $R$, the mod $\frakp$ reduction map is denoted by $\pi_\frakp$. 

Let $F$ be a number field and 
$T:G_F \to \scrO$ be a pseudorepresentation such that 
$T=T_1+\cdots+T_n$ where $T_1:G_F \to \scrO, \cdots, T_n:G_F\to \scrO$ are traces of irreducible 
representations $\sigma_1, \cdots, \sigma_n$ of $G_F$ over $\scrLbar$. 
Let $w\nmid p$ be a finite place of $F$ and assume that 
the restrictions of $\sigma_1, \cdots, \sigma_n$ to $W_w$ are monodromic. 

\begin{mydefinition}
\label{Defn: locus}
The \textnormal{irreducibility and $w$-purity locus} 
(\textnormal{irreducibility and purity locus}, in short) of $T_1, \cdots, T_n$ 
is defined to be the 
collection of all tuples of the form $(\calO, \frakp, \kappa, \res, \rho_1$, $\cdots, \rho_n)$ 
where $\calO$ is a domain over $\bbZp$, 
$\frakp$ is a prime ideal of $\calO$ such that the Henselization $\calO_\frakp^h$ of $\calO_\frakp$ is Hausdorff, 
$\kappa$ denotes the residue field $\calO_\frakp/\frakp \calO_\frakp$ and is an algebraic extension of $\bbQp$, 
$\res:\scrO \hra\calO$ is an injective $\bbZp$-algebra homomorphism 
and for each $1\leq i\leq n$, 
$\rho_i$ is an irreducible $G_F$-representation over $\overline \kappa$ such that 
$\pi_\frakp \circ \res \circ T_i$ is equal to the trace of $\rho_i$ and $\rho_i|_{G_w}$ is pure 
(of some weight depending on $i$). 
\end{mydefinition}

For each element $(\calO, \frakp, \kappa, \res, \rho_1, \cdots, \rho_n)$ of this locus, 
we choose semisimple $G_F$-representations $\widetilde \rho_1$, $\cdots$, $\widetilde \rho_n$ 
over $\overline Q(\calO)$ 
such that $\tr \widetilde \rho_i= \res \circ T_i$ for all $1\leq i\leq n$ 
(using \cite[Theorem 1]{TaylorGaloisReprAssociatedToSiegelModForms}) 
and choose $G_F$-representations 
$\varrho_1, \cdots, \varrho_n$ over $\calO_\frakp^h$ such that 
$\tr \varrho_i= \res \circ T_i$ for all $1\leq i\leq n$ 
(using \cite[Th\'eor\`eme 1]{NyssenPseudoRepresentations} and the fact that $\calO_\frakp^h$ is 
Hausdorff). 
We also fix a minimal prime $\fraka$ of $\calO_\frakp^h$. 
The composite maps $\calO \to \calO/\frakp \to \kappa$, 
$\calO \to \calO_{\frakp} \to \calO_{\frakp}^h$ 
and $\calO \to \calO_{\frakp} \to \calO_{\frakp}^h \to \calO_{\frakp}^h/\fraka$ 
are denoted by $\pi_\frakp$, $h_\frakp$ and $\pi_\fraka \circ h_\frakp$ respectively. 
Note that the map $\pi_\fraka \circ h_\frakp$ is injective (as observed in the beginning of 
\S \ref{SubSec: purity sum}).

\begin{mytheorem}
\label{Thm: purity sum}
Suppose that the irreducibility and purity locus of $T_1, \cdots, T_n$ is nonempty. Then there exist positive integers 
$m, t_1\leq t_2\leq \cdots \leq t_m$ and 
irreducible Frobenius-semisimple representations 
$r_1, \cdots, r_m$ of $W_w$ with coefficients in $\scrO^\intal[1/p]$ such that 
the following hold.
\begin{enumerate}
\item $T|_{W_w}$ is equal to $\sum_{i=1}^m \sum _{j=1}^{t_i} \tr r_i |\Art^\mo_K|_K^{j-1}$. 
\item If $(\calO, \frakp, \kappa, \res, \rho_1, \cdots, \rho_n)$ is an element of the irreducibility and purity locus of $T_1$, $\cdots$, $T_n$, 
then for any lift $\res^\dag$ (resp. $\pi^\dag_{\frakp}, (\pi_{\fraka}\circ h_\frakp)^\dag$) of 
$\res$ (resp. $\pi_{\frakp}, \pi_{\fraka} \circ h_\frakp$), 
there are isomorphisms 
\begin{align}
\WD \left(\bigoplus_{i=1}^n \rho_i |_{W_w} \right)^\Frss
& \simeq 
\bigoplus_{i=1}^m \Sp_{t_i} (\pi_{\frakp}^\dag\circ \res^\dag \circ r_i), \label{Eqn: purity sum rho prime}\\
\WD \left(\bigoplus_{i=1}^n (\pi_{\fraka} \circ \varrho_i) \otimes \Qbar(\calO_\frakp^h/\fraka) |_{W_w} \right)^\Frss 
& \simeq 
\bigoplus_{i=1}^m \Sp_{t_i} ( (\pi_{\fraka} \circ h_\frakp)^\dag \circ  \res^\dag \circ r_i), \label{Eqn: purity sum varrho} \\
\WD \left(\bigoplus_{i=1}^n \widetilde \rho_i |_{W_w} \right)^\Frss
&\simeq 
\bigoplus_{i=1}^m \Sp_{t_i} ( \res^\dag \circ r_i)\label{Eqn: purity sum tilde rho}, \\
\WD \left(\bigoplus_{i=1}^n \sigma_i |_{W_w} \right)^\Frss
&\simeq 
\bigoplus_{i=1}^m \Sp_{t_i} (r_i).\label{Eqn: purity sum sigma}
\end{align}
\end{enumerate}
\end{mytheorem}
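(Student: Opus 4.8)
The plan is to reduce Theorem~\ref{Thm: purity sum} to purity for pseudorepresentations (Theorem~\ref{Thm: purity for pseudorepresentations}), applied to each of the $n$ pseudorepresentations $T_i|_{W_w}\colon W_w\to\scrO$ individually, and then to reassemble. Fix an element $(\calO,\frakp,\kappa,\res,\rho_1,\dots,\rho_n)$ of the (nonempty) irreducibility and purity locus, together with the auxiliary data $\varrho_1,\dots,\varrho_n$ over $\calO_\frakp^h$ and the minimal prime $\fraka$ of $\calO_\frakp^h$ chosen before the statement. The coefficient ring that will play the role of ``$\calO$'' in Theorem~\ref{Thm: purity for pseudorepresentations} is $\calO_\frakp^h/\fraka$: it is an integral domain of characteristic zero over $\bbZp$ in which $p$ is invertible (since $\kappa$ has characteristic zero, $p$ is a unit in $\calO_\frakp$, hence in $\calO_\frakp^h$ and in $\calO_\frakp^h/\fraka$), the composite $\res':=\pi_\fraka\circ h_\frakp\circ\res\colon\scrO\hookrightarrow\calO_\frakp^h/\fraka$ is injective, and $\pi_\fraka\circ\varrho_i$ acts on a free $\calO_\frakp^h/\fraka$-module because Nyssen's theorem produces $\varrho_i$ on a free $\calO_\frakp^h$-module, the residual pseudorepresentation $\pi_\frakp\circ\res\circ T_i=\tr\rho_i$ being that of an absolutely irreducible representation.

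Next I would produce, for each $i$, a Weil-Deligne representation over $(\calO_\frakp^h/\fraka)[1/p]$ lifting $\res'\circ(T_i|_{W_w})$ and admitting a pure specialization. After base change to $\Qbar(\calO_\frakp^h/\fraka)$ the representation $\pi_\fraka\circ\varrho_i|_{W_w}$ becomes isomorphic to $\sigma_i|_{W_w}\otimes\Qbar(\calO_\frakp^h/\fraka)$ --- both are semisimple with the same trace, and $\sigma_i$ is irreducible over the algebraically closed field $\scrLbar$, so $\sigma_i\otimes\Qbar(\calO_\frakp^h/\fraka)$ is irreducible --- hence $\pi_\fraka\circ\varrho_i|_{W_w}$ is monodromic over $\calO_\frakp^h/\fraka$ (the monodromy operator is the usual fixed expression in $\varrho_i(\tau_0)-1$ for a topological generator $\tau_0$ of the tame quotient, so it is defined over $(\calO_\frakp^h/\fraka)[1/p]$). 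Thus $(r_i,N_i):=\WD(\pi_\fraka\circ\varrho_i|_{W_w})$ is a Weil-Deligne representation over $(\calO_\frakp^h/\fraka)[1/p]$ with $\tr r_i=\res'\circ(T_i|_{W_w})$, the Weil-Deligne parametrization preserving traces of monodromic representations. Since $\fraka\subseteq\frakp^h$, the map $f\colon\calO_\frakp^h/\fraka\to\calO_\frakp^h/\frakp^h=\kappa\hookrightarrow\bbQpbar$ is defined, and by compatibility of $\WD$ with base change one gets $(f\circ(r_i,N_i))^\Frss\simeq\WD(\rho_i|_{W_w})^\Frss\otimes\bbQpbar$ (here $\varrho_i\bmod\frakp^h$, having the trace of the irreducible $\rho_i$ and the same dimension, is isomorphic to $\rho_i$ over $\overline\kappa$), and this is pure because $\rho_i|_{G_w}$ is pure. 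Theorem~\ref{Thm: purity for pseudorepresentations} now applies to $(T_i|_{W_w},(r_i,N_i),\res',f)$ and yields $m_i$, $t^{(i)}_1\le\dots\le t^{(i)}_{m_i}$ and irreducible Frobenius-semisimple representations $r^{(i)}_1,\dots,r^{(i)}_{m_i}$ of $W_w$ over $\scrO^\intal[1/p]$ with $T_i|_{W_w}=\sum_a\sum_{j=1}^{t^{(i)}_a}\tr r^{(i)}_a|\Art^\mo_K|_K^{j-1}$, for which moreover the isomorphisms \eqref{Eqn: Isom 1} and \eqref{Eqn: Isom 2} hold for $T_i|_{W_w}$ with respect to every domain carrying a Weil-Deligne lift of it with a pure specialization. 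Reindexing the disjoint union $\{r^{(i)}_a\}_{i,a}$ so that the attached $t$-values are nondecreasing gives $r_1,\dots,r_m$ with $m=\sum_i m_i$, and summing the trace identities over $i$ gives part~(1).

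For part~(2), given an arbitrary element of the locus with its data $\varrho_i$, $\fraka$, $\res'$, $f$ as above, I would feed the datum $(\calO_\frakp^h/\fraka,\res',(r_i,N_i),f)$ into \eqref{Eqn: Isom 1} and \eqref{Eqn: Isom 2} of Theorem~\ref{Thm: purity for pseudorepresentations} and take direct sums over $i$: \eqref{Eqn: Isom 1} gives \eqref{Eqn: purity sum varrho} (taking $\res'^\dag=(\pi_\fraka\circ h_\frakp)^\dag\circ\res^\dag$), and \eqref{Eqn: Isom 2}, combined with $(f\circ(r_i,N_i))^\Frss\simeq\WD(\rho_i|_{W_w})^\Frss\otimes\bbQpbar$, gives \eqref{Eqn: purity sum rho prime} once the lift $f^\dag\circ\res'^\dag$ is matched with $\pi_\frakp^\dag\circ\res^\dag$ (a compatible choice of lifts exists because $\bbQpbar$ is algebraically closed, and changing the lifts only replaces each $r_\ell$ by a $W_w$-representation isomorphic to it over $\scrLbar$). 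To obtain \eqref{Eqn: purity sum sigma} I would pass to the overfield $\Qbar(\calO_\frakp^h/\fraka)$, where $\bigoplus_i\sigma_i|_{W_w}$ and $\bigoplus_i\pi_\fraka\circ\varrho_i|_{W_w}$ become isomorphic (each $\sigma_i\otimes\Qbar(\calO_\frakp^h/\fraka)$ being irreducible and $\pi_\fraka\circ\varrho_i\otimes\Qbar(\calO_\frakp^h/\fraka)$ having the same trace and dimension); then \eqref{Eqn: purity sum varrho} identifies $\WD$ of this common representation with $\bigl(\bigoplus_\ell\Sp_{t_\ell}(r_\ell)\bigr)\otimes\Qbar(\calO_\frakp^h/\fraka)$, and since a Frobenius-semisimple Weil-Deligne representation over a field is determined up to isomorphism by the trace of its underlying representation and the ranks of the powers of its monodromy (Theorem~\ref{Thm: Structure of Frob ss Weil-Deligne representations}) --- data insensitive to field extension --- the isomorphism \eqref{Eqn: purity sum sigma} descends to $\scrLbar$. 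Finally \eqref{Eqn: purity sum tilde rho} follows from \eqref{Eqn: purity sum sigma} by base change along the embedding $\scrLbar\hookrightarrow\Qbar(\calO)$, using $\widetilde\rho_i\simeq\sigma_i\otimes\Qbar(\calO)$.

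The main obstacle will be the bridge in the second paragraph: checking carefully that the Nyssen lift over the Henselization, reduced modulo the minimal prime $\fraka$ and restricted to $W_w$, really satisfies the hypotheses of Theorem~\ref{Thm: purity for pseudorepresentations} --- that $\pi_\fraka\circ\varrho_i|_{W_w}$ is monodromic over the possibly non-Noetherian, non-local domain $\calO_\frakp^h/\fraka$, that the underlying trace of its Weil-Deligne parametrization is $\res'\circ(T_i|_{W_w})$, and that specializing this parametrization along $f$ recovers $\WD(\rho_i|_{W_w})^\Frss$ --- together with the descent of isomorphism classes of Frobenius-semisimple Weil-Deligne representations along field extensions used in the third paragraph. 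Everything else is formal manipulation of the conclusions of Theorem~\ref{Thm: purity for pseudorepresentations} and bookkeeping of the various lifts.
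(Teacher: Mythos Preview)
Your approach is essentially the paper's: apply Theorem~\ref{Thm: purity for pseudorepresentations} to each $T_i|_{W_w}$ using the Weil--Deligne representation $\WD(\pi_\fraka\circ\varrho_i|_{W_w})$ over $\calO_\frakp^h/\fraka$ (with pure specialization $\WD(\rho_i|_{W_w})$), then sum over $i$. The paper's write-up is terser but follows the same route, and like you it reads \eqref{Eqn: purity sum tilde rho} and \eqref{Eqn: purity sum sigma} off from \eqref{Eqn: purity sum varrho} via the isomorphisms $\sigma_i\otimes\Qbar(\calO_\frakp^h/\fraka)\simeq(\pi_\fraka\circ\varrho_i)\otimes\Qbar(\calO_\frakp^h/\fraka)\simeq\widetilde\rho_i\otimes\Qbar(\calO_\frakp^h/\fraka)$.

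There is, however, a genuine error in your justification of the descent for \eqref{Eqn: purity sum sigma}. You assert that a Frobenius-semisimple Weil--Deligne representation over a field is determined up to isomorphism by the trace of its underlying Weil representation together with the ranks of the powers of its monodromy. That is false: if $r_1,r_2$ are non-isomorphic irreducible Frobenius-semisimple representations of $W_K$ of the same dimension, then $\Sp_2(r_1)\oplus r_2\oplus r_2|\Art_K^\mo|_K$ and $r_1\oplus r_1|\Art_K^\mo|_K\oplus\Sp_2(r_2)$ have the same underlying trace and the same rank of $N^j$ for every $j\ge 0$, yet they are not isomorphic. The correct descent uses the full content of Theorem~\ref{Thm: Structure of Frob ss Weil-Deligne representations}: if two Frobenius-semisimple Weil--Deligne representations over an algebraically closed field $\Omega$ become isomorphic after base change to an extension $\Omega'$, then by the uniqueness clause of that theorem the multisets $\{(t_i,r_i\otimes\Omega')\}$ agree; since each $r_i$ is irreducible Frobenius-semisimple over the algebraically closed field $\Omega$, it is determined by its trace (which lies in $\Omega$), so $r_i\otimes\Omega'\simeq r_j'\otimes\Omega'$ forces $r_i\simeq r_j'$ over $\Omega$, and the isomorphism descends. (The paper leaves this descent implicit.)

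A smaller imprecision: in identifying $(\pi_\fraka\circ\varrho_i)\otimes\Qbar(\calO_\frakp^h/\fraka)$ with $\sigma_i\otimes\Qbar(\calO_\frakp^h/\fraka)$ you claim both are semisimple, but semisimplicity of the former is not given. The clean argument (used in the paper) is: the two have the same dimension and the same trace, and since $\sigma_i\otimes\Qbar(\calO_\frakp^h/\fraka)$ is irreducible, the semisimplification of $(\pi_\fraka\circ\varrho_i)\otimes\Qbar(\calO_\frakp^h/\fraka)$ is irreducible of full dimension, hence it is itself irreducible and isomorphic to $\sigma_i\otimes\Qbar(\calO_\frakp^h/\fraka)$.
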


\begin{proof}
Since $(\pi_\fraka \circ \varrho_i) \otimes \overline Q(\calO_\frakp^h/\fraka)$ is irreducible 
and the $G_F$-representations $\widetilde \rho_i\otimes \overline Q(\calO_\frakp^h/\fraka)$, 
$(\pi_\fraka \circ \varrho_i) \otimes \overline Q(\calO_\frakp^h/\fraka)$ have same traces, 
these are isomorphic. 
Similarly $\sigma_i\otimes \Qbar(\calO)$ and $\widetilde \rho_i$ are isomorphic. 
Since $\sigma_i|_{W_w}$ is monodromic, $\pi_\fraka \circ \varrho_i|_{W_w}$ is monodromic. 
Note that $\WD(\pi_\fraka \circ \varrho_i|_{W_w})$ has coefficients in 
$(\calO_\frakp^h/\fraka)[1/p]$, its trace is equal to 
$\pi_\fraka\circ  h_\frakp  \circ \res \circ T|_{W_w}$ 
and it has a pure specialization 
$\WD(\rho_i|_{W_w})$. 
Also note that the map $\pi_\fraka\circ  h_\frakp \circ \res$ is an injective $\bbZp$-algebra homomorphism 
from $\scrO$ to the domain $\calO_\frakp^h/\fraka$ over $\bbZp$. 
Then theorem \ref{Thm: purity for pseudorepresentations} gives 
part (1) and equation \eqref{Eqn: purity sum rho prime}, 
\eqref{Eqn: purity sum varrho}. 
Since $(\pi_\fraka \circ \varrho_i) \otimes \overline Q(\calO_\frakp^h/\fraka)$ 
is isomorphic to 
$\widetilde \rho_i\otimes \overline Q(\calO_\frakp^h/\fraka)$ 
and $\sigma_i\otimes \Qbar(\calO_\frakp^h/\fraka)$, 
we get equation \eqref{Eqn: purity sum tilde rho} and 
\eqref{Eqn: purity sum sigma} from equation \eqref{Eqn: purity sum varrho}. 
\end{proof}

\section{Local Langlands correspondence for $\gln_n$ in families}
\label{Sec: LLC families}
In this section, we use theorem \ref{Thm: purity big} to strenthen a part of  
the local Langlands correspondence for $\gln_n$ in families (see theorem \ref{Thm: EH 626}) formulated by 
Emerton and Helm. 
Let $S, G, r_v$ be as in theorem \ref{Thm: EH 621} and 
suppose that the $A[G]$-module $\pitilde(\{r_v\}_{v\in S})$ exists. 
\begin{mytheorem}
\label{Thm: LLC families}
Let $\frakp$ be a prime of $A[1/p]$. Suppose there exists a $\bbZp$-algebra homomorphism $i_\frakp : A\to \bbQpbar$ such that 
$\frakp$ is contained inside the kernel of $i_\frakp$ and 
$r_v \otimes_{A, i_\frakp} \bbQpbar$ is pure for all $v\in S$. 
Then the surjection $\varsigma_\frakp$ as in theorem \ref{Thm: EH 626} is an isomorphism. 
If $\frakp$ lies on only one irreducible component of $\Spec A[1/p]$, then 
the surjection $\gamma_\frakp$ as in theorem \ref{Thm: EH 626} is also an isomorphism. 
\end{mytheorem}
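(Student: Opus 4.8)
The plan is to reduce Theorem \ref{Thm: LLC families} to the monodromy-rank statement in theorem \ref{Thm: purity big}(2) and then invoke the relevant parts of theorem \ref{Thm: EH 626}. Recall that theorem \ref{Thm: EH 626} already asserts that the surjection $\varsigma_\frakp$ is an isomorphism provided there exists a minimal prime $\fraka$ of $A$ contained in $\frakp$ for which the rank of $N_v(\fraka)^j$ equals the rank of $(N_v(\fraka) \otimes_{A/\fraka} \kappa(\frakp))^j$ for all $j\geq 1$ and all $v\in S$; similarly, when $s=1$ (so $\frakp$ lies on a unique irreducible component), the surjection $\gamma_\frakp$ is an isomorphism under the analogous rank condition on $N_v(\fraka_1)$. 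So the entire content of the theorem is the verification of these rank equalities under the purity hypothesis.

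First I would fix $v\in S$ and a minimal prime $\fraka$ of $A$ contained in $\frakp$; let $\calR$ denote the domain $(A/\fraka)[1/p]$, which contains $\bbZp$ as a subalgebra, and let $\calT$ be the free $\calR$-module underlying $r_v\otimes_A \calR$ — this is a big Galois representation of $G_{E_v}$ in the sense of \S \ref{SubSec: Set up}, since $r_v|_{I_{E_v}}$ is continuous and $A/\fraka$ is a complete Noetherian local ring with finite residue field, hence monodromic by Grothendieck's monodromy theorem. The monodromy operator $N_v(\fraka)$ in the notation of \S \ref{SubSec: LLC families notations} is exactly the monodromy $N\in \End_{\calR_p}(\calT_p)$ appearing in theorem \ref{Thm: purity big}. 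Now the hypothesis gives a $\bbZp$-algebra homomorphism $i_\frakp: A\to \bbQpbar$ with $\frakp\subseteq \ker i_\frakp$; since $\fraka\subseteq\frakp\subseteq\ker i_\frakp$, this factors through $A/\fraka$ and hence defines a $\bbZp$-algebra homomorphism $\lambda:\calR\to\bbQpbar$ with $V_\lambda = r_v\otimes_{A,i_\frakp}\bbQpbar$, which is pure by assumption. Theorem \ref{Thm: purity big}(2) then says that no power of the monodromy of $\calT_p$ drops rank upon specializing at $\lambda$, i.e., $\rk N_v(\fraka)^j = \rk \lambda(N)^j$ for all $j\geq 1$. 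But $\lambda(N)$ is obtained from $N_v(\fraka)\otimes_{A/\fraka}\kappa(\frakp)$ by a further field extension $\kappa(\frakp)\hookrightarrow\bbQpbar$ (the image of $\lambda$ lands in a finite extension of $\bbQp$ containing the residue field $\kappa(\frakp)$, since $\lambda$ kills $\frakp$ and factors through $A/\frakp\hookrightarrow\kappa(\frakp)$), and rank of a nilpotent matrix is stable under field extension; hence $\rk \lambda(N)^j = \rk (N_v(\fraka)\otimes_{A/\fraka}\kappa(\frakp))^j$. Combining, $\rk N_v(\fraka)^j = \rk (N_v(\fraka)\otimes_{A/\fraka}\kappa(\frakp))^j$ for all $j\geq 1$, which is precisely the condition needed in theorem \ref{Thm: EH 626}. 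Running this for every $v\in S$ gives the isomorphism $\varsigma_\frakp$. When $\frakp$ lies on a unique irreducible component there is a single minimal prime $\fraka_1\subseteq\frakp$, so the same argument applied to $\fraka = \fraka_1$ yields the rank condition for $N_v(\fraka_1)$ and hence the isomorphism $\gamma_\frakp$.

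The one point that requires a little care — and I expect it to be the only real obstacle — is matching up the coefficient rings and the "$\inte$/$\intal$" bookkeeping: theorem \ref{Thm: purity big} is stated for $\bbZp$-algebra homomorphisms $\lambda:\calR\to\bbQpbar$ where $\calR$ is a domain, whereas $A/\fraka$ may have zero-divisors after inverting $p$ only if $\fraka$ is not prime — but $\fraka$ is a minimal prime of the reduced ring $A$, so $A/\fraka$ is a domain and $(A/\fraka)[1/p]$ is a domain as well, and one must confirm that $i_\frakp$ indeed descends along $A\twoheadrightarrow A/\fraka$ (immediate from $\fraka\subseteq\ker i_\frakp$) and lands in $\bbQpbar$ as required (immediate since $A/\frakp$ embeds in $\kappa(\frakp)$, a finite extension of $\bbQp$, and $i_\frakp$ is continuous or at worst factors through this finite extension). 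A secondary subtlety is that the purity hypothesis in theorem \ref{Thm: LLC families} is stated place-by-place (for all $v\in S$), matching the place-by-place structure of theorem \ref{Thm: EH 626}, so no compatibility across places is needed; and the statement of theorem \ref{Thm: EH 626} about $\gamma_\frakp$ genuinely requires $s=1$, which is exactly our hypothesis that $\frakp$ lies on only one irreducible component. With these checks in place the proof is a direct chain of citations.
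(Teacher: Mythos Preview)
Your approach is exactly the paper's: choose a minimal prime $\fraka\subseteq\frakp$, apply theorem \ref{Thm: purity big} to the domain $(A/\fraka)[1/p]$ to get the rank equalities for powers of the monodromy, and then invoke theorem \ref{Thm: EH 626}. The paper's proof is a three-line version of what you wrote.

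There is one small slip. You claim that $\lambda(N)$ is obtained from $N_v(\fraka)\otimes_{A/\fraka}\kappa(\frakp)$ by a field extension $\kappa(\frakp)\hookrightarrow\bbQpbar$, justifying this by ``$\lambda$ kills $\frakp$ and factors through $A/\frakp\hookrightarrow\kappa(\frakp)$''. But the hypothesis only says $\frakp\subseteq\ker i_\frakp$, not equality, so the induced map $A/\frakp\to\bbQpbar$ need not be injective and hence need not extend to $\kappa(\frakp)$. The fix is immediate: since rank can only drop under specialization, you have
\[
\rk_{Q(A/\fraka)} N_v(\fraka)^j \;\geq\; \rk_{\kappa(\frakp)} \bigl(N_v(\fraka)\otimes\kappa(\frakp)\bigr)^j \;\geq\; \rk_{\bbQpbar} \lambda(N)^j,
\]
and theorem \ref{Thm: purity big}(2) forces the two ends to be equal, so all three agree. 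Alternatively, apply the ``Moreover'' clause of theorem \ref{Thm: purity big} with $\mu$ the reduction map $(A/\fraka)[1/p]\to\kappa(\frakp)$, noting that $\lambda(\ker\mu)=0$; this gives the Weil--Deligne structure of $r_{v,\frakp}$ directly and hence the rank equality.
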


\begin{proof}
Let $\fraka$ denote a minimal prime of $A$ contained in $\frakp$. 
Then by theorem \ref{Thm: purity big}, 
the rank of the $i$-th power of monodromy of $r_{v, \fraka}$ 
is equal to the rank of the $i$-th power of the monodromy of $r_{v, \frakp}$ 
for any $i\geq 1$ and any $v\in S$.  
Hence the result follows from theorem \ref{Thm: EH 626}. 
\end{proof}

\section{Families of Galois representations}
\label{Sec: Constancy of automorphic types}
The goal of this section is to illustrate the role of purity for big Galois representations, 
purity for pseudorepresentations and theorem \ref{Thm: purity sum} in the study of variation of local 
Euler factors, local automorphic types, intersection points of irreducible components etc.~
for families of Galois representations.

\subsection{Hida families}
For Hida theory of ordinary cusp forms, we follow \cite{HidaICM86} and 
refer to the references \cite{HidaGalrepreord, HidaIwasawa} contained therein. 
We follow \cite{GeraghtyPhDThesis} for Hida theory for definite unitary groups.

\subsubsection{Cusp forms}
Let $f=\sum_{n=1}^\infty a_n(f)q^n$ be a normalized eigen cusp form of weight $k\geq 2$. 
Then by 
\cite{Eichler54,ShimuraCorrespondances} (for $k=2$), 
\cite{DeligneModFormAndlAdicRepr} (for $k>2$), there exists 
a unique (up to equivalence) continuous Galois representation $\rho_f:G_\bbQ \to \gln_2(\bbQpbar)$ such that 
$\tr \rho_f(\Fr_\ell)= a_\ell(f)$ for any prime $\ell$ not dividing $p$ and the level of $f$. 
Let $\pi(f)= \otimes_{\ell\leq \infty}'\pi(f)_\ell$ denote the 
irreducible unitary representation of $\gln_2(\bbA_\bbQ)$ corresponding to $f$ (see \cite[Theorem 5.19, 4.30]{GelbertRedBook}).

Let $N$ be a positive integer and $p$ be an odd prime with $p\nmid N$ and $Np\geq 4$. 
Let $h^\ord$ be the universal $p$-ordinary Hecke algebra of tame level $N$ 
(denoted $h^\ord(N; \bbZp)$ in \cite{HidaICM86}). 
It has an algebra structure over $\bbZp[[X]]$. 
Let $\fraka$ be a minimal prime of $h^\ord$. 
Let $\calR(\fraka)$ denote the ring $h^\ord/\fraka$ and 
$\calQ(\fraka)$ denote the fraction field of $\calR(\fraka)$. 
Let $\calQbar(\fraka)$ be an algebraic closure of $\calQ(\fraka)$.  
Let $S$ denote the set of places of $\bbQ$ dividing $Np\infty$. 
By \cite[Theorem 3.1]{HidaICM86}, 
there exists a unique (up to equivalence) continuous (in the sense of \cite[\S 3]{HidaICM86}) Galois representation 
$\rho_\fraka: G_{\bbQ, S}\to \gln_2(\calQ(\fraka))$ 
such that $\rho_\fraka$ has traces in $\calR(\fraka)$ and $\tr(\rho_\fraka(\Fr_\ell)) = T_\ell\modu \fraka$ for all prime $\ell\nmid Np$ 
where $T_\ell\in h^\ord$ denotes the Hecke operator associated to $\ell$. 
Henceforth, the representation $\rho_\fraka \otimes \calQbar(\fraka)$ is 
denoted by $\rho_\fraka$. 
A $\bbZp$-algebra homomorphism $\lambda:h^\ord \to \bbQpbar$ is 
said to be an {\it arithmetic specialization} if 
$\lambda((1+X)^{p^r} - (1+p)^{kp^r})=0$ for some integers $k\geq 0$ and $r\geq 0$. 
The arithmetic specializations of $h^\ord$ are in one-to-one correspondence 
(by the isomorphism of \cite[Theorem 2.2]{HidaICM86}) 
with the $p$-ordinary $p$-stabilized (in the sense of \cite[p.~538]{WilesOrdinaryLambdaAdic}) 
normalized eigen cusp forms of tame level a divisor of $N$. 
Given an arithmetic specialization $\lambda$ of $h^\ord$, let $f_\lambda$ denote the 
corresponding ordinary form.

\begin{mydefinition}
\label{Defn: automorphic type: cusp}
The \textnormal{automorphic type} of a minimal prime $\fraka$ of $h^\ord$ at a prime $\ell\neq p$ is defined to be the unordered tuple 
$\AT_\ell(\fraka)$ if the automorphic types of $\pi(f_\lambda)_\ell$ are equal to $\AT_\ell(\fraka)$ 
for all arithmetic specialization $\lambda$ of $h^\ord$ with $\lambda(\fraka)=0$. 
\end{mydefinition}

\begin{mytheorem}
\label{Thm: Application: Hida cusp}
Let $\fraka$ be a minimal prime of $h^\ord$ and $\ell\neq p$ be a prime. 
Then the following hold. 
\begin{enumerate}
\item If $\WD(\rho_\fraka|_{W_\ell})^\Frss$ is indecomposable and irreducible, 
then there exists an irreducible Frobenius-semisimple 
representation $r$ over $\calR(\fraka)^\intal[1/p]$ such that 
$\WD(\rho_\fraka|_{W_\ell})^\Frss$ is isomorphic to $r \otimes \calQbar (\fraka)$ and 
$\WD(\rho_{f_\lambda}|_{W_\ell})^\Frss$ is isomorphic to $\lambda^\intal \circ r$ 
for any arithmetic specialization $\lambda$ of $h^\ord$ with $\lambda(\fraka)=0$. 

\item If $\WD(\rho_\fraka|_{W_\ell})^\Frss$ is indecomposable and reducible, then 
there exists an $\calR(\fraka)^\intal$-valued character $\chi$ of $W_\ell$ such that 
$\WD(\rho_\fraka|_{W_\ell})^\Frss$ is isomorphic to $\Sp_2(\chi) \otimes 
\calQbar (\fraka)$ and 
$\WD(\rho_{f_\lambda}|_{W_\ell})^\Frss$ is isomorphic to $\lambda^\intal \circ \Sp_2(\chi)$ 
for any arithmetic specialization $\lambda$ of $h^\ord$ with $\lambda(\fraka)=0$. 
\item If $\WD(\rho_\fraka|_{W_\ell})^\Frss$ is decomposable, then 
there exist $\calR(\fraka)^\intal$-valued characters $\chi_1, \chi_2$ of $W_\ell$ such that 
$\WD(\rho_\fraka|_{W_\ell})^\Frss$ is isomorphic to $(\chi_1\oplus \chi_2) \otimes 
\calQbar (\fraka)$ and 
$\WD(\rho_{f_\lambda}|_{W_\ell})^\Frss$ is isomorphic to $\lambda^\intal \circ (\chi_1\oplus \chi_2)$ 
for any arithmetic specialization $\lambda$ of $h^\ord$ with $\lambda(\fraka)=0$. 
\end{enumerate}
Consequently, the notion of automorphic types of minimal prime ideals of $h^\ord$ is well-defined. 
\end{mytheorem}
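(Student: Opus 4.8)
The plan is to encode the branch as a pseudorepresentation of the global Galois group and invoke Theorem \ref{Thm: purity sum}. As preliminaries I would record that $\calR(\fraka)=h^\ord/\fraka$ is a complete Noetherian local domain, finite over $\bbZp[[X]]$, with finite residue field and with $p$ a nonzerodivisor, since $\fraka$ is a minimal prime of the $p$-torsion free ring $h^\ord$; its fraction field is $\calQ(\fraka)$ and $T:=\tr\rho_\fraka\colon G_{\bbQ,S}\to\calR(\fraka)$ is a pseudorepresentation of dimension $2$. I would then check that $\rho_\fraka\otimes\calQbar(\fraka)$ is irreducible: otherwise its semisimplification is a sum of two characters valued in $\calR(\fraka)^\intal$ (the roots of the characteristic polynomials of $\rho_\fraka(g)$), and composing with $\lambda^\intal$ for an arithmetic $\lambda$ with $\lambda(\fraka)=0$ would exhibit $\rho_{f_\lambda}$ as a sum of two characters, contradicting the irreducibility of the Galois representation attached to a cuspidal eigenform; such $\lambda$ exist because every branch of $h^\ord$ carries classical points. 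Thus $T=\tr\sigma_1$ for the single absolutely irreducible $G_\bbQ$-representation $\sigma_1:=\rho_\fraka\otimes\calQbar(\fraka)$. Finally I would note $\sigma_1|_{W_\ell}$ is monodromic: the continuous image $\rho_\fraka(I_\ell)$ is bounded, hence contained in $\gln_2(B)$ for a finite $\calR(\fraka)$-subalgebra $B$ of $\calQ(\fraka)$ which (being a domain finite over a complete local ring) is complete local, and Grothendieck's monodromy theorem applied to $\rho_\fraka|_{I_\ell}\colon I_\ell\to\gln_2(B)$ furnishes the monodromy $N\in\gln_2(B[1/p])\subseteq\gln_2(\calQ(\fraka))$.

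For each arithmetic specialization $\lambda$ of $h^\ord$ with $\lambda(\fraka)=0$, I would check that the tuple $(\calR(\fraka),\ker\lambda,\kappa_\lambda,\mathrm{id},\rho_{f_\lambda})$ lies in the irreducibility and $\ell$-purity locus of $T$ in the sense of Definition \ref{Defn: locus}: the localization $\calR(\fraka)_{\ker\lambda}$ is Noetherian local, so its Henselization is Hausdorff by Krull's intersection theorem; the residue field $\kappa_\lambda=\Frac(\calR(\fraka)/\ker\lambda)$ embeds in $\bbQpbar$ via $\lambda$ and is therefore algebraic over $\bbQp$; $\rho_{f_\lambda}$ is absolutely irreducible with trace the reduction of $T$ modulo $\ker\lambda$; and $\rho_{f_\lambda}|_{G_\ell}$ is pure, by local--global compatibility together with the weight--monodromy property for modular forms at $\ell\neq p$. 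Hence the locus is nonempty and Theorem \ref{Thm: purity sum} yields positive integers $m,t_1\leq\cdots\leq t_m$ and irreducible Frobenius-semisimple representations $r_1,\dots,r_m$ of $W_\ell$ over $\calR(\fraka)^\intal[1/p]$ with
\[
\WD(\rho_\fraka|_{W_\ell})^\Frss\simeq\bigoplus_{i=1}^m\Sp_{t_i}(r_i)_{/\calQbar(\fraka)}
\]
(equation \eqref{Eqn: purity sum sigma}) and, for every such $\lambda$ and a suitable choice of $\lambda^\intal$, an isomorphism $\WD(\rho_{f_\lambda}|_{W_\ell})^\Frss\simeq\bigoplus_{i=1}^m\Sp_{t_i}(\lambda^\intal\circ r_i)_{/\bbQpbar}$ (equation \eqref{Eqn: purity sum rho prime}).

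It then remains to unwind the case distinction. Since $\sum_{i=1}^m t_i\dim r_i=2$, exactly one of the following holds: $m=1,t_1=1,\dim r_1=2$; or $m=1,t_1=2,\dim r_1=1$; or $m=2,t_1=t_2=1,\dim r_1=\dim r_2=1$; and these correspond precisely to $\WD(\rho_\fraka|_{W_\ell})^\Frss$ being indecomposable irreducible, indecomposable reducible, or decomposable. Taking $r:=r_1$ in the first case, $\chi:=r_1$ in the second, and $\chi_1:=r_1$, $\chi_2:=r_2$ in the third, the two displayed isomorphisms give the asserted descriptions of $\WD(\rho_\fraka|_{W_\ell})^\Frss$ and of $\WD(\rho_{f_\lambda}|_{W_\ell})^\Frss$. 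When a factor is a character I would improve its ring of coefficients from $\calR(\fraka)^\intal[1/p]$ to $\calR(\fraka)^\intal$ via the integrality statement in part (3) of Theorem \ref{Thm: purity for pseudorepresentations} (available since the proof of Theorem \ref{Thm: purity sum} passes through that result), whose hypothesis holds here because $2$ is a unit in $\calR(\fraka)$ ($p$ being odd) and the characteristic polynomial of $\phi$ for $T$ has coefficients $\tr\rho_\fraka(\phi),\det\rho_\fraka(\phi)\in\calR(\fraka)\subseteq\calR(\fraka)^\intal$.

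For the concluding assertion, local--global compatibility identifies the Bernstein--Zelevinsky data of $\pi(f_\lambda)_\ell$ with that of $\rec(\iota_p(\WD(\rho_{f_\lambda}|_{W_\ell})^\Frss))$ up to an unramified twist, hence identifies the automorphic type $\AT$ of $\pi(f_\lambda)_\ell$ (an unordered list of pairs of integers, unaffected by such twisting) with the list $\{(\dim r_1,t_1),\dots,(\dim r_m,t_m)\}$ read off from the displayed isomorphism. This list is independent of $\lambda$, so the automorphic types of $\pi(f_\lambda)_\ell$ agree over all arithmetic $\lambda$ with $\lambda(\fraka)=0$, and $\AT_\ell(\fraka)$ is well-defined. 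I expect the main obstacle to be the external inputs rather than the structural argument: the purity of $\rho_{f_\lambda}|_{G_\ell}$ at $\ell\neq p$ (local--global compatibility and weight--monodromy for modular forms), the monodromicity of $\rho_\fraka|_{W_\ell}$, and the nonemptiness of the purity locus, which rests on the density of classical points in Hida families. Once these are in place, the bookkeeping with $\intal$-closures and with the choices of lifts of reduction maps is routine.
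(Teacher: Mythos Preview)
Your proposal is correct and follows essentially the same route as the paper: verify the hypotheses of Theorem~\ref{Thm: purity sum} (irreducibility of $\rho_\fraka$, monodromicity at $\ell$ via Grothendieck, Hausdorffness of the Henselization via Noetherianity and Krull, irreducibility and purity of $\rho_{f_\lambda}|_{G_\ell}$ via Ribet and Carayol), apply it, and then read off the three possible shapes in dimension~$2$. You in fact supply more detail than the paper does, most usefully the appeal to Theorem~\ref{Thm: purity for pseudorepresentations}(3) to upgrade the characters in cases (2) and (3) from $\calR(\fraka)^\intal[1/p]$-valued to $\calR(\fraka)^\intal$-valued, a point the paper's proof leaves implicit.
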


\begin{proof}
Note that $\tr \rho_\fraka$ is a pseudorepresentation of $G_\bbQ$ with values in $\calR(\fraka)$ 
and $\rho_\fraka$ is irreducible. 
For any prime $\frakp$ of $\calR(\fraka)$, the ring $\calR(\fraka)_\frakp$ is 
Noetherian and hence $\calR(\fraka)_\frakp^h$ is Noetherian 
(see \cite[\href{http://stacks.math.columbia.edu/tag/06LJ}{Tag 06LJ}]{StacksProject}). 
So $\calR(\fraka)_\frakp^h$ is Hausdorff 
by Krull intersection theorem (see \cite[Theorem 8.10]{MatsumuraCommutativeRingTheory}). 
Note that by Grothendieck's monodromy theorem (\cite[p.~515--516]{SerreTate}), 
$\rho_\fraka|_{G_\ell}$ is monodromic 
(see the proof of \cite[Lemma 7.8.14]{BellaicheChenevierAsterisQUE}). 
For each arithmetic specialization $\lambda$ of $h^\ord$ with $\lambda(\fraka)=0$, 
$\rho_{f_\lambda}$ is an irreducible $G_F$-representation (by 
\cite[Theorem 2.3]{RibetGalReprAttachedToEigenformsWithNebentypus}) 
over an algebraic closure of the residue field of $\calR(\fraka)_{\frakp_\lambda}$, 
$\tr \rho_{f_\lambda}$ is equal to $\lambda \circ \tr \rho_\fraka$ and 
$\rho_{f_\lambda}|_{G_\ell}$ is pure (by \cite{Carayol86RepresentationAssocieHiblModForm}). 
So by theorem \ref{Thm: purity sum}, we get part (1), (2), (3). 
Since local-global compatibility holds for cusp forms (by \cite{Carayol86RepresentationAssocieHiblModForm}) 
and 
each minimal prime ideal of $h^\ord$ is contained in the kernel of some arithmetic specialization of $h^\ord$ 
(as $h^\ord$ is a finite type $\bbZp[[X]]$-module), the final part follows. 

\end{proof}

\subsubsection{Automorphic representations for definite unitary groups}

Let $F$ be a CM field, $F^+$ be its maximal totally real subfield. 
Let $n\geq 2$ be an integer and assume that if $n$ is even, then $n[F^+:\bbQ]$ is divisible by 4. 
Let $\ell>n$ be a rational prime and assume that every prime of $F^+$ lying above $\ell$  splits in $F$. 
Let $K$ be a finite extension of $\bbQl$ in $\bbQlbar$ which contains the 
image of every embedding $F\hra \bbQlbar$. 
Let $S_\ell$ denote the set of places of $F^+$ above $\ell$. 
Let $R$ denote a finite set of finite places of $F^+$ disjoint from $S_\ell$ and consisting 
of places which split in $F$. 
For each place $v\in S_\ell \cup R$, choose once and for all a place $\widetilde v$ of $F$ lying above $v$. 
For $v\in R$, let 
$\Iw(\widetilde v)$ be the compact open subgroup of $\gln_n(\calO_{F_{\widetilde v}})$ and 
$\chi_v$ be the character as in \cite[\S 2.1, 2.2]{GeraghtyPhDThesis}. 

Let $G$ be the reductive algebraic group over $F^+$ as in \cite[\S 2.1]{GeraghtyPhDThesis}. 
For each dominant weight $\lambda$ (as in \cite[Definition 2.2.3]{GeraghtyPhDThesis}) for $G$, 
the group $G(\bbA_{F^+}^{\infty, R})\times \prod_{v\in R} \Iw(\widetilde v)$ 
acts on the spaces $S_{\lambda, \{\chi_v\}}(\bbQlbar)$, $S_{\lambda, \{\chi_v\}}^\ord(\calO_K)$ 
(as in \cite[Definition 2.2.4, 2.4.2]{GeraghtyPhDThesis}).  
For an irreducible constituent $\pi$ of the 
$G(\bbA_{F^+}^{\infty, R})\times \prod_{v\in R} \Iw(\widetilde v)$-representation $S_{\lambda, \{\chi_v\}}(\bbQlbar)$, 
let $\WBC(\pi)$ denote the weak base change of $\pi$ to $\gln_n(\bbA_F)$ 
(which exists by \cite[Corollaire 5.3]{LabesseChangementDeBaseCMDiscreteSeries}) 
and let $r_\pi:G_F\to \gln_n(\bbQlbar)$ 
(as in \cite[Proposition 2.7.2]{GeraghtyPhDThesis}) 
denote the unique (up to equivalence) continuous semisimple representation 
attached to $\WBC(\pi)$ via \cite[Theorem 3.2.5]{ChenevierHarrisConstructionAutoGalRepr}. 

An irreducible constituent $\pi$ of the 
$G(\bbA_{F^+}^{\infty, R})\times \prod_{v\in R} \Iw(\widetilde v)$-representation 
$S_{\lambda, \{\chi_v\}}(\bbQlbar)$ is said to be an {\it ordinary automorphic 
representation for $G$} if 
$\pi^{U(\frakl^{b,c})}\cap 
S_{\lambda, \{\chi_v\}}^\ord(U(\frakl^{b,c}), \calO_K)
\neq 0$ 
for some integers $0\leq b\leq c$ (see 
\cite[Definition 2.2.4, \S 2.3]{GeraghtyPhDThesis} for details). 
Let $U$ be a compact open subgroup of $G(\bbA_{F^+}^\infty)$, 
$T$ be a finite set of finite places of $F^+$ containing $R\cup S_\ell$ and 
such that every place in $T$ splits in $F$ (see \cite[\S 2.3]{GeraghtyPhDThesis}). 
Let $\bbT^\ord$ denote the universal ordinary Hecke algebra $\bbT^{T, \ord}_{\{\chi_v\}}(U(\frakl^\infty), \calO_K)$
(as in \cite[Definition 2.6.2]{GeraghtyPhDThesis}). 
Let $\Lambda$ be the completed group algebra as in \cite[Definition 2.5.1]{GeraghtyPhDThesis}. 
By definition of $\bbT^\ord$, it is equipped with a $\Lambda$-algebra structure and is finite over $\Lambda$. 
An $\calO_K$-algebra homomorphism $f:A \to \bbQlbar$ is said to 
be an {\it arithmetic specialization} of a finite $\Lambda$-algebra $A$ if 
$\ker (f|_\Lambda)$ is equal to the prime ideal $\wp_{\lambda, \alpha}$ 
(as in \cite[Definition 2.6.3]{GeraghtyPhDThesis}) of $\Lambda$ for 
some dominant weight $\lambda$ for $G$ and a finite order character 
$\alpha:T_n(\frakl) \to \calO_K^\times$. 
By \cite[Lemma 2.6.4]{GeraghtyPhDThesis}, each arithmetic specialization $\eta$ of $\bbT^\ord$ 
determines an ordinary automorphic representation $\pi_\eta$ for $G$. 
An arithmetic specialization $\eta$ of $\bbT^\ord$ is said to be {\it stable} 
if $\WBC(\pi_\eta)$ is cuspidal. 

Let $\frakm$ be a non-Eisenstein maximal ideal of $\bbT^\ord$ (in the sense of \cite[\S 2.7]{GeraghtyPhDThesis}). 
Let $r_\frakm$ denote the representation of $G_{F^+}$ as in \cite[Proposition 2.7.4]{GeraghtyPhDThesis}. 
Then by restricting it to $G_F$ and then composing with the projection 
$\gln_n(\bbT^\ord_\frakm)\times \gln_1(\bbT^\ord_\frakm)\to \gln_n(\bbT^\ord_\frakm)$, we get a continuous 
representation $G_F \to \gln_n(\bbT^\ord_\frakm)$ which is denoted by $r_\frakm$ by abuse of notation. 
Since $\frakm$ is non-Eisenstein, the $G_F$-representations 
$\eta\circ r_\frakm$ and $r_{\pi_\eta}$ are isomorphic 
for any arithmetic specialization $\eta$ of $\bbT^\ord_\frakm$
(by \cite[Proposition 2.7.2, 2.7.4]{GeraghtyPhDThesis}).

\begin{mydefinition}
\label{Defn: automorphic type: unitary}
Let $w$ be a finite place of $F$ not lying above $\ell$ and 
$\fraka$ be a minimal prime of $\bbT^\ord$. 
If the maximal ideal of $\bbT^\ord$ containing $\fraka$ is non-Eisenstein 
and some stable arithmetic specialization of $\bbT^\ord$ vanishes on $\fraka$, 
then the \textnormal{automorphic type} of $\fraka$ at $w$ is defined to be the unordered tuple 
$\AT_w(\fraka)$ if the automorphic types of $\WBC(\pi_\eta)_w$ are equal to $\AT_w(\fraka)$ 
for all stable arithmetic specialization $\eta$ of $\bbT^\ord$ with $\eta(\fraka)=0$. 
\end{mydefinition}

\begin{mytheorem}
\label{Thm: Application: Hida unitary}
Let $w\nmid \ell$ be a finite place of $F$, $\fraka$ be a 
minimal prime of $\bbT^\ord_\frakm$ and $\frakm$ be the maximal ideal of $\bbT^\ord$ 
containing $\fraka$. Suppose $\frakm$ is non-Eisenstein.  
Denote the quotient ring $\bbT^\ord_\frakm/\fraka$ by $\calR(\fraka)$ and 
the representation $r_\frakm \modu \fraka$ by $r_\fraka$. 
Then there exist positive integers $m, t_1\leq \cdots \leq t_m$ and 
irreducible Frobenius-semisimple representations $r_1, \cdots, r_m$ of $W_w$ 
over $\calR(\fraka)^\intal[1/\ell]$ 
such that 
$\WD(r_\fraka|_{W_w})^\Frss$ is isomorphic to $\oplus_{i=1}^m \Sp_{t_i}(r_i)$ 
over $\Qbar(\calR(\fraka))$ 
and 
$\WD(r_{\pi_\eta}|_{W_w})^\Frss$ is isomorphic to 
$\oplus_{i=1}^m \Sp_{t_i}(\eta^\intal \circ r_i)$ 
for any stable arithmetic specialization $\eta$ of $\calR(\fraka)$. 
Consequently, the notion of local automorphic types of minimal prime ideals of $\bbT^\ord$ is well-defined. Moreover, 
two minimal prime ideals of $\bbT^\ord$ are contained in two non-Eisenstein maximal 
ideals and are both contained in the kernel of a stable arithmetic specialization 
of $\bbT^\ord$ only if their automorphic types at any finite place $v\nmid \ell$ of $F$ are the same. 
\end{mytheorem}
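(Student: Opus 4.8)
The plan is to deduce the theorem from Theorem~\ref{Thm: purity big}, applied to the big Galois representation $r_\fraka|_{W_w}$, together with local--global compatibility and purity for the Galois representations attached to cuspidal automorphic representations of $\gln_n$. The structural point that makes this route available---and that distinguishes the argument from the proof of Theorem~\ref{Thm: Application: Hida cusp}---is that, since $\frakm$ is non-Eisenstein, $r_\frakm$ is (after restricting to $G_F$ and composing with the projection $\gln_n(\bbT^\ord_\frakm)\times\gln_1(\bbT^\ord_\frakm)\to\gln_n(\bbT^\ord_\frakm)$) realized on a \emph{free} module over $\bbT^\ord_\frakm$ itself; hence $r_\fraka=r_\frakm\bmod\fraka$ is a genuine representation $G_F\to\gln_n(\calR(\fraka))$ on a free $\calR(\fraka)$-module, with $\calR(\fraka)=\bbT^\ord_\frakm/\fraka$ a complete Noetherian local domain which is a characteristic-zero $\bbZl$-algebra. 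So, unlike in the cusp form case, we need not pass through pseudorepresentations and Theorem~\ref{Thm: purity sum}: Theorem~\ref{Thm: purity big} applies directly with $\calR=\calR(\fraka)$ and $\calT=\calR(\fraka)^n$.

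Concretely, I would first observe that for $w\nmid\ell$ the restriction $r_\fraka|_{I_w}$ is continuous and $\calR(\fraka)$ is local with finite residue field, so $r_\fraka|_{W_w}$ is monodromic by Grothendieck's monodromy theorem, \ie\ it is a big Galois representation in the sense of \S\ref{SubSec: Set up}. Proposition~\ref{Prop: rationality in general} applied to the Weil--Deligne representation $\WD(r_\fraka|_{W_w})$ over $\calR(\fraka)$ with $B=\calR(\fraka)$ (equivalently, \eqref{Eqn: Decomposition of WD of big V into Sp t r}) then produces integers $m$, $t_1\le\cdots\le t_m$, unramified $(\calR(\fraka)^\intal)^\times$-valued characters $\chi_i$ and irreducible Frobenius-semisimple representations $\rho_i$ of $W_w$ with finite image over $\calR(\fraka)^\intal[1/\ell]$ such that $\WD(r_\fraka|_{W_w})^\Frss\simeq\bigoplus_{i=1}^m\Sp_{t_i}(\chi_i\otimes\rho_i)$ over $\Qbar(\calR(\fraka))$; put $r_i:=\chi_i\otimes\rho_i$. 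This gives the first isomorphism. Next I would take $\eta$ to be a stable arithmetic specialization of $\calR(\fraka)$. Because $\frakm$ is non-Eisenstein, $\eta\circ r_\fraka\simeq r_{\pi_\eta}$ as $G_F$-representations (\cite[Proposition 2.7.2, 2.7.4]{GeraghtyPhDThesis}), so $V_\eta:=\calT\otimes_{\calR(\fraka),\eta}\bbQlbar\simeq r_{\pi_\eta}|_{W_w}$; and since $\eta$ is stable, $\WBC(\pi_\eta)$ is cuspidal, whence $r_{\pi_\eta}|_{W_w}$ is pure by local--global compatibility at finite places not above $\ell$ (\cite{CaraianiLocalGlobalCompatibility}, and the references recalled in the introduction). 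Theorem~\ref{Thm: purity big}(1), applied with $\calR=\calR(\fraka)$ and $\lambda=\eta$---the $m,t_i,\chi_i,\rho_i$ occurring there being exactly those above, since \eqref{Eqn: Decomposition of WD of big V into Sp t r} for $r_\fraka|_{W_w}$ is precisely the decomposition just obtained---then yields $\WD(r_{\pi_\eta}|_{W_w})^\Frss\simeq\bigoplus_{i=1}^m\Sp_{t_i}(\eta^\intal\circ r_i)$, which is the second isomorphism.

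For the consequences, each $\eta^\intal\circ r_i$ is again irreducible Frobenius-semisimple by Lemma~\ref{Lemma: irreducible Frobenius semisimple under specializations}, of dimension $\dim\rho_i$, and the latter is independent of $\eta$; hence $\AT(\rec(\iota_\ell(\WD(r_{\pi_\eta}|_{W_w})^\Frss)))=\{(\dim\rho_i,t_i)\}_{i=1}^m$ does not depend on the stable arithmetic specialization $\eta$, and by local--global compatibility it equals the automorphic type of $\WBC(\pi_\eta)_w$. So, whenever the preconditions of Definition~\ref{Defn: automorphic type: unitary} are met, $\AT_w(\fraka)$ is independent of the choice of $\eta$, \ie\ well-defined. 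For the final assertion, let $\fraka\subseteq\frakm$, $\fraka'\subseteq\frakm'$ be minimal primes of $\bbT^\ord$ lying in non-Eisenstein maximal ideals, both contained in $\ker\eta$ for a single stable arithmetic specialization $\eta$. Since $\bbT^\ord$ is a finite product of complete local rings $\bbT^\ord_{\frakm''}$, the prime $\ker\eta$ cannot contain minimal primes supported in two distinct local factors (their sum is the unit ideal), so $\frakm=\frakm'$; thus $\eta$ descends to a stable arithmetic specialization of both $\calR(\fraka)$ and $\calR(\fraka')$, the representation $\pi_\eta$ is the same in either case, and the displayed formulas give $\AT_v(\fraka)=\AT(\WBC(\pi_\eta)_v)=\AT_v(\fraka')$ for every finite place $v\nmid\ell$ of $F$.

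I expect the real difficulty to lie not in the monodromy/purity input---Theorem~\ref{Thm: purity big} does that work cleanly once its hypotheses hold---but in marshalling the automorphic prerequisites with precisely the right hypotheses: that the non-Eisenstein condition indeed gives $r_\frakm$ on a free $\bbT^\ord_\frakm$-module (so that $r_\fraka$ lives over the domain $\calR(\fraka)$ rather than over its fraction field), that $\eta\circ r_\fraka\cong r_{\pi_\eta}$ holds and $r_{\pi_\eta}|_{W_w}$ is pure at \emph{all} finite $w\nmid\ell$---including the ramified ones---for the Galois representations attached to the cuspidal base changes $\WBC(\pi_\eta)$, and the routine verifications that $\calR(\fraka)$ is $\ell$-torsion free (hence of the type required by \S\ref{SubSec: Set up}) and that $\ker\eta$ cannot join minimal primes from two different maximal ideals.
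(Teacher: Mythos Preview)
Your proposal is correct and follows essentially the same route as the paper: monodromicity of $r_\fraka|_{W_w}$ via Grothendieck, purity of $r_{\pi_\eta}|_{W_w}$ for stable $\eta$ via Caraiani's local--global compatibility, and then a direct application of Theorem~\ref{Thm: purity big} (the paper also notes Theorem~\ref{Thm: purity sum} as an alternative). Your treatment of the ``moreover'' clause---reducing to a single maximal ideal via the semi-local product decomposition of $\bbT^\ord$ and then reading off $\AT_v(\fraka)=\AT(\WBC(\pi_\eta)_v)=\AT_v(\fraka')$---spells out what the paper compresses into ``the rest follows from the first part.''
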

\begin{proof}
If $\pi$ is an irreducible constituent of the 
$G(\bbA_{F^+}^{\infty, R})\times \prod_{v\in R} \Iw(\widetilde v)$-representation 
$S_{\lambda, \{\chi_v\}}(\bbQlbar)$ such that $\WBC(\pi)$ is cuspidal, 
then for any finite place $w$ of $F$ not dividing $\ell$, 
$r_\pi|_{G_w}$ is pure by 
\cite[Theorem 1.1, 1.2]{CaraianiLocalGlobalCompatibility} 
and the proofs of theorem 5.8, corollary 5.9 of \loccit. 
Note that $r_\fraka|_{W_w}$ is monodromic by Grothendieck's 
monodromy theorem (see \cite[p.\,515--516]{SerreTate}). 
So theorem \ref{Thm: purity big} (or theorem \ref{Thm: purity sum}) gives the first part. 
By \cite[Theorem 1.1]{CaraianiLocalGlobalCompatibility} on local-global compatibility 
of cuspidal automorphic representations for $\gln_n$, the notion of local automorphic 
types is well-defined. Then the rest follows from the first part. 
\end{proof}

\subsection{Eigenvarieties}

Let $X$ be a rigid analytic space over a finite extension of $\bbQp$. 
If $z$ is an element of $X(\bbQpbar)$, then 
the map $\calO(X)\to \bbQpbar$ is denoted by 
$\ev_{zX}$. 
The restriction map between the global sections of two admissible open subsets $U\supset V$ of $X$ is denoted by $\res_{UV}$.

Let $E/\bbQ$ be an imaginary quadratic field and $G$ denote the definite unitary group 
$U(m)$ (as in \cite[\S 6.2.2]{BellaicheChenevierAsterisQUE}) in $m\geq 1$ variables. 
We assume that $p$ splits in $E$. 
Let $\calH$ denote the Hecke algebra as in \cite[\S 7.2.1]{BellaicheChenevierAsterisQUE}. 
Let $\calZ_0 \subset \Hom_{\ring}(\calH, \bbQpbar)\times \bbZ^m$ be the set of pairs 
$(\psi_{(\pi, \calR)}, \underline k)$ associated to the $p$-refined 
automorphic representations $(\pi, \calR)$ of any weight $\underline k$ (see 
\cite[\S 7.2.2, 7.2.3]{BellaicheChenevierAsterisQUE}). 
Let $e$ be the idempotent as in \cite[\S 7.3.1]{BellaicheChenevierAsterisQUE} and 
let $\calZ_e\subset \calZ_0$ denote the subset of $(\psi_{(\pi, \calR)}, \underline k)$ such 
that $e(\pi^p)\neq 0$. We assume that $\calZ_e$ is nonempty. 
Then by \cite[\S 7.3]{BellaicheChenevierAsterisQUE}, there exists an eigenvariety for $\calZ_e$, 
\ie, there exist a 
reduced rigid analytic space $X$ over a finite extension $L$ of $\bbQp$, 
a ring homomorphism $\psi:\calH \to \calO(X)$, 
an analytic map $\omega:X \to \Hom ( (\bbZ_p^\times)^m, \bbG_m^\rig) \times _\bbQp L $ 
and 
an accumulation and Zariski-dense subset $Z$ of $X(\bbQpbar)$ 
such that conditions (i), (ii), (iii) of \cite[Definition 7.2.5]{BellaicheChenevierAsterisQUE} hold. 
In particular, $z\mapsto (\ev_{zX} \circ \psi, \omega(z))$ induces a bijection $Z\xra{\sim} \calZ_e$. 
The set $Z$ is called the set of {\it arithmetic points} of $X$. 
Let $Z_\reg\subset Z$ be the subset of points parametrizing the $p$-refined automorphic representations $(\pi, \calR)$ 
such that $\pi_\infty$ is regular and the semisimple conjugacy class of $\pi_p$ has $m$ distinct eigenvalues 
(see \cite[\S 7.5.1]{BellaicheChenevierAsterisQUE}). 
By \cite[Lemma 7.5.3]{BellaicheChenevierAsterisQUE}, 
$Z_\reg$ is a Zariski-dense subset of $X$. 
For each $z\in Z$, we fix a $p$-refined automorphic representation $\pi_z$ of $U(m)$ such that $z$ corresponds to $\pi_z$ under the bijection $Z\xra{\sim}\calZ_e$. 
For each $z\in Z_\reg$, let $\rho_{z, p}:G_{E} \to \gln_m(\bbQpbar)$ 
denote the unique (up to equivalence) continuous semisimple representation 
attached to $\WBC(\pi_z)$ via \cite[Theorem 3.2.5]{ChenevierHarrisConstructionAutoGalRepr}. 
By \cite[Proposition 7.5.4]{BellaicheChenevierAsterisQUE}, 
there exists a pseudorepresentation $T:G_{E} \to \calO(X)$ such that 
$\ev_{zX} \circ T= \tr \rho_{z, p}$ for all $z\in Z_\reg$. 
Let $Z_\reg^\st$ denote the set of points $z\in Z_\reg$ such that 
$\WBC(\pi_z)$ is cuspidal. 
For $z\in Z_\reg^\st$, the Galois representation $\rho_{z, p}$ is 
expected to be irreducible. It is known when $m\leq 3$ by \cite{BlasiusRogawskiTwoBall} 
and in many cases when $m=4$ by an unpublished work of Ramakrishnan. 
By \cite[Theorem D]{PatrikisTaylor}, it is known for infinitely many primes $p$. 

\begin{mydefinition}
\label{Defn: automorphic type: eigen}
Let $w$ be a finite place of $E$ not lying above $p$ and 
$Y_0$ be an irreducible component of $X$ such that $Z_\reg^\st \cap Y_0$ is nonempty. 
The \textnormal{automorphic type} of $Y_0$ at $w$ is defined to be the unordered tuple 
$\AT_w(Y_0)$ if the automorphic types of $\WBC(\pi_z)_w$ are equal to $\AT_w(Y_0)$ 
for all $z\in Z_\reg^\st\cap Y_0$. 
\end{mydefinition}

Let $\xi:\widetilde X \to X$ be a normalization of $X$. 
Let $C$ be a connected component of $\widetilde X$ and $Y$ be the irreducible 
component $\xi(C)$ (together with its canonical structure of reduced rigid space) of $X$. 
By \cite[Lemma 2.2.1 (2)]{ConradIrreducibleComponentsOfRigidSpaces}, the map $\xi|_C:C \to Y$ is a normalization. 
For each $x\in X(\bbQpbar)\cap Y$, we fix a point $\widetilde x$ in $C(\bbQpbar)$ which goes to $x$ under the map 
$C(\bbQpbar) \to Y(\bbQpbar)$.

\begin{mytheorem}
\label{Thm: Application: Eigen}
Let $w\nmid p$ be a finite place of $E$. 
Suppose that the intersection of $Z_\reg^\st$ with any irreducible component of $X$ is nonempty 
and for any $z\in Z_\reg^\st$, the Galois representation $\rho_{z, p}$ is irreducible.  
Then there exist positive integers $n, t_1, \cdots, t_n$, 
irreducible Frobenius-semisimple representations 
$r_1, \cdots, r_n$ of $W_w$ 
over $\calO(C)^\intal$ such that the following hold. 
\begin{enumerate}
\item The polynomial $(\Eul (\rho_C, N_C))^\mo$ has coefficients in $\calO(C)^\intal$ 
and $\res_{\widetilde XC} \circ \xi \circ T|_{W_w}$ is equal to the trace of $(\rho_C, N_C)$ where 
$$(\rho_C, N_C):=\bigoplus_{i=1}^n \Sp_{t_i}(r_i)_{/\calO(C)^\intal}.$$

\item If $z\in Z_\reg^\st\cap Y$, or more generally if $z\in Z_\reg \cap Y$ such that 
$\rho_{z, p}$ is irreducible and $\rho_{z, p}|_{W_w}$ is pure,  
then for any arbitrary lift $\ev_{\widetilde z C}^\intal$ of $\ev_{\widetilde z C}$, there is 
an isomorphism
$$\WD(\rho_{z, p}|_{W_w})^\Frss \simeq
\ev^\intal_{\widetilde zC} \circ (\rho_C, N_C)
$$
and 
$$\ev_{\widetilde zC}^\intal (\Eul(\rho_C, N_C))=\Eul_w(\rho_{\pi_z,p}).$$

\item Let $V$ be a nonempty connected admissible open subset of $C$ and 
$(\rho_V, N_V): W_w \to \gln_m(\calO(V)^\intal)$ be a Weil-Deligne representation 
such that $\res_{CV} \circ \res_{\widetilde X C} \circ \xi \circ T=\tr \rho_V$ and 
$f_V\circ (\rho_V, N_V)$ is pure for some $\bbZp$-algebra homomorphism $f_V: \calO(V)^\intal \to \bbQpbar$. 
Then for any arbitrary lift $\res_{CV}^\intal$ of $\res_{CV}$, there is an isomorphism 
$$((\rho_V, N_V)\otimes_{\calO(V)} 
\overline Q (\calO(V))
 )^\Frss
\simeq 
(\res^\intal_{CV} \circ (\rho_C, N_C))
\otimes_{\calO(V)^\intal }
\overline Q (\calO(V)).$$
\end{enumerate}
Consequently, the notion of local automorphic types of irreducible components of $X$ is well-defined. 
Moreover, two irreducible components of $X$ intersect at 
a point of $Z_\reg^\st$ only if their local automorphic types 
at any finite place of $E$ outside $p$ are the same. 
\end{mytheorem}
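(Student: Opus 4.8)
The plan is to deduce the theorem from Theorem~\ref{Thm: purity sum} (with Theorem~\ref{Thm: purity for pseudorepresentations} supplying part~(3)), taking, in the notation of \S\ref{SubSec: purity sum}, the number field to be $E$, the domain $\scrO$ to be $\calO(C)$, and the pseudorepresentation to be $T_C:=\res_{\widetilde X C}\circ\xi\circ T\colon G_E\to\calO(C)$, with $n=1$. First I would record the structural inputs. Since $X$ is reduced, its normalization $\widetilde X$ is reduced, so $C$ is a reduced connected normal rigid space; being connected and normal it is irreducible, whence $\calO(C)$ is a domain, and it is a characteristic-zero $\bbZp$-algebra because $X$ lives over a finite extension $L$ of $\bbQp$. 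By hypothesis $Z_\reg^\st\cap Y$ is nonempty; fix $z_0$ in it together with the chosen point $\widetilde z_0\in C$ above it, so that $\tr\rho_{z_0,p}=\ev_{\widetilde z_0 C}\circ T_C$. As $\rho_{z_0,p}$ is irreducible by hypothesis and the reducibility locus of a pseudorepresentation is Zariski closed (see \cite{BellaicheChenevierAsterisQUE}), the pseudorepresentation $T_C\otimes_{\calO(C)}\overline Q(\calO(C))$ is the trace of an irreducible $G_E$-representation $\sigma$ over $\overline Q(\calO(C))$ (which exists by \cite[Theorem 1]{TaylorGaloisReprAssociatedToSiegelModForms}). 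To see $\sigma|_{W_w}$ is monodromic I would pass to an affinoid open $U\subseteq C$, lift $T_C|_U$ to a Galois representation on a finite type module over an integral extension of a normalization of $\calO(U)$ by \cite[Lemma 7.8.11]{BellaicheChenevierAsterisQUE}, apply Grothendieck's monodromy theorem in the form of \cite[Lemma 7.8.14]{BellaicheChenevierAsterisQUE}, and note that monodromicity (definition~\ref{Defn: Monodromic}) passes to semisimplifications and is insensitive to the field extension $\overline Q(\calO(C))\subseteq\overline Q(\calO(U))$.

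Next I would verify that the irreducibility and $w$-purity locus of $T_C$ (definition~\ref{Defn: locus}) is nonempty, and more precisely attach to any $z\in Z_\reg\cap Y$ with $\rho_{z,p}$ irreducible and $\rho_{z,p}|_{W_w}$ pure --- in particular to any $z\in Z_\reg^\st\cap Y$, for which $\WBC(\pi_z)$ is cuspidal and hence $\rho_{z,p}|_{G_w}$ is pure by Caraiani's local--global compatibility (\cite[Theorem 1.1, 1.2]{CaraianiLocalGlobalCompatibility} and the proofs of Theorem 5.8, Corollary 5.9 of \loccit) --- an element of the locus. Choosing a connected affinoid open $U\ni\widetilde z$ in $C$, the ring $\calO(U)$ is a Noetherian domain, $\res_{CU}\colon\calO(C)\hookrightarrow\calO(U)$ is injective since $U$ is Zariski dense in the irreducible $C$, the residue field $\kappa$ of $\widetilde z$ is finite over $L$ hence algebraic over $\bbQp$, and $\calO(U)_\frakp^h$ is Noetherian (\cite[\href{http://stacks.math.columbia.edu/tag/06LJ}{Tag 06LJ}]{StacksProject}) hence Hausdorff by Krull's intersection theorem (\cite[Theorem 8.10]{MatsumuraCommutativeRingTheory}), so $(\calO(U),\frakp,\kappa,\res_{CU},\rho_{z,p})$ lies in the locus. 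Taking $z=z_0$ shows the locus is nonempty, so Theorem~\ref{Thm: purity sum} produces $n$, $t_1\le\cdots\le t_n$ and irreducible Frobenius-semisimple $r_1,\dots,r_n$ over $\calO(C)^\intal[1/p]=\calO(C)^\intal$ (both $p$ and the residue characteristic of $w$ are units in the $\bbQp$-algebra $\calO(C)$) with $T_C|_{W_w}=\sum_{i=1}^n\sum_{j=1}^{t_i}\tr r_i|\Art_K^\mo|_K^{j-1}$. Setting $(\rho_C,N_C):=\bigoplus_{i=1}^n\Sp_{t_i}(r_i)_{/\calO(C)^\intal}$ gives the trace identity of part~(1), and $(\Eul(\rho_C,N_C))^\mo$ has coefficients in $\calO(C)^\intal$ because $(\rho_C,N_C)^{I_w,N=0}$ is a free direct summand on which $\phi$ acts $\calO(C)^\intal$-linearly. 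Equation~\eqref{Eqn: purity sum rho prime} of Theorem~\ref{Thm: purity sum}, applied to the locus element attached to such a $z$, gives $\WD(\rho_{z,p}|_{W_w})^\Frss\simeq\ev_{\widetilde z C}^\intal\circ(\rho_C,N_C)$ once $\pi_\frakp^\dag\circ\res_{CU}^\dag$ is identified with a lift of $\ev_{\widetilde z C}$; the Euler-factor identity of part~(2) then follows by applying $\ev_{\widetilde z C}^\intal$ to coefficients, using that the Euler factor is unchanged under Frobenius-semisimplification and that passing to the $I_w$-invariant, $N=0$ subspace commutes with this specialization because $N_C$ has the explicit $\Sp$-block shape.

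For part~(3), $V$ is a connected admissible open of the normal space $C$, hence irreducible, so $\calO(V)$ is a domain and $\res_{CV}$ is injective; since $(\rho_C,N_C)$ is a Weil-Deligne representation on a free $\calO(C)^\intal$-module admitting, by part~(2), the pure specialization $\ev_{\widetilde z_0 C}^\intal\circ(\rho_C,N_C)\simeq\WD(\rho_{z_0,p}|_{W_w})^\Frss$, applying Theorem~\ref{Thm: purity for pseudorepresentations} to the pair $(\rho_C,N_C)$, $(\rho_V,N_V)$ --- both having trace equal to the appropriate pullback of $T_C|_{W_w}$ and both admitting pure specializations --- yields the isomorphism of part~(3), the uniqueness assertion in that theorem identifying the representations occurring there with the $r_i$ already fixed. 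Finally, for the consequences: for $z\in Z_\reg^\st\cap Y$ part~(2) gives $\WD(\rho_{z,p}|_{W_w})^\Frss\simeq\bigoplus_i\Sp_{t_i}(\ev_{\widetilde z C}^\intal\circ r_i)$ with each $\ev_{\widetilde z C}^\intal\circ r_i$ irreducible Frobenius-semisimple of dimension $\dim r_i$ (lemma~\ref{Lemma: irreducible Frobenius semisimple under specializations}), so the automorphic type of $\rec(\WD(\rho_{z,p}|_{W_w})^\Frss)$ --- which equals that of $\WBC(\pi_z)_w$ by local--global compatibility --- is the unordered tuple $\{(\dim r_i,t_i)\}_i$, independent of $z$; hence $\AT_w(Y)$ is well-defined. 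If two irreducible components $Y_1,Y_2$ meet at $z\in Z_\reg^\st$, applying this at $z$ to each expresses the automorphic type of the single representation $\WD(\rho_{z,p}|_{W_w})^\Frss$ as both $\AT_w(Y_1)$ and $\AT_w(Y_2)$, forcing equality, for every finite place $w\nmid p$ of $E$.

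The main obstacle is verifying the hypotheses of Theorem~\ref{Thm: purity sum}: that $\calO(C)$ is a domain carrying $T_C$ as the trace of a single irreducible representation whose restriction to $W_w$ is monodromic (the monodromicity needing the affinoid-local lift of \cite[Lemma 7.8.11, 7.8.14]{BellaicheChenevierAsterisQUE} together with a descent to $\overline Q(\calO(C))$), and that the irreducibility and $w$-purity locus is nonempty --- which rests on the existence of one point $z_0\in Z_\reg^\st\cap Y$ (hypothesis), irreducibility of $\rho_{z_0,p}$ (hypothesis), purity of $\rho_{z_0,p}|_{G_w}$ (Caraiani), and the passage to a Noetherian affinoid neighborhood to make the relevant Henselization Hausdorff. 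Once these are in place the cited theorems do the work, and the remainder is bookkeeping together with one use of local--global compatibility; in particular no density of the arithmetic points is used.
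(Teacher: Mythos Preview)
Your proposal is correct and follows essentially the same route as the paper: reduce to Theorem~\ref{Thm: purity sum} (for parts (1)--(2)) and Theorem~\ref{Thm: purity for pseudorepresentations} (for part (3)) by exhibiting an element of the irreducibility and $w$-purity locus via a connected affinoid neighbourhood of a lift of a point in $Z_\reg^\st\cap Y$, with monodromicity supplied by \cite[Lemma 7.8.11, 7.8.14]{BellaicheChenevierAsterisQUE} and purity by Caraiani. The only step the paper makes more explicit is the use of Zorn's lemma to factor an \emph{arbitrary} lift $\ev_{\widetilde z C}^\intal$ as a composite $\ev_{\widetilde z U}^\dag\circ\res_{CU}^\dag$ of lifts, which you need in order to deduce the isomorphism of part~(2) for every lift rather than only for those arising as such composites.
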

\begin{proof}
By \cite[Lemma 2.1.4]{ConradIrreducibleComponentsOfRigidSpaces}, $\calO(C)$ is an integral domain over $\bbZp$. 
By \cite[Theorem 1]{TaylorGaloisReprAssociatedToSiegelModForms}, 
$\res_{\widetilde X C} \circ \xi \circ T$ is equal to the trace of a representation $\sigma$ of $G_E$ 
over $\Qbar (\calO(C))$. Since $Z_\reg^\st\cap Y$ is nonempty and the Galois representations attached 
to its points are irreducible, the representation $\sigma$ is irreducible. 
Let $z$ be a point in $Z_\reg^\st\cap Y$. 
We choose a connected affinoid neighbourhood $U_{\widetilde z}$ of $\widetilde z$. 
Note that $U_{\widetilde z}$ is contained in $C$ 
and the map $\res_{C U_{\widetilde z}}$ is injective 
by \cite[Lemma 2.1.4]{ConradIrreducibleComponentsOfRigidSpaces}. 
The point $\widetilde z$ defines a maximal ideal $\frakm_{\widetilde z}$ of $\calO(U_{\widetilde z})$. 
The localization of $\calO(U_{\widetilde z})$ at $\frakm_{\widetilde z}$ 
is Noetherian and hence the Henselization of this localization is Hausdorff 
by Krull intersection theorem (see \cite[Theorem 8.10]{MatsumuraCommutativeRingTheory}). 
Note that $\rho_{z, p}$ is an irreducible representation of $G_E$ over an algebraic closure of the residue 
field of $\calO(U_{\widetilde z})_{\frakm_{\widetilde z}}$ 
and has trace equal to $\pi_{\frakm_{\widetilde z}} \circ \res_{CU_{\widetilde z}} \circ (\res_{\widetilde X C} \circ \xi \circ T)$. 
By \cite[Theorem 1]{TaylorGaloisReprAssociatedToSiegelModForms}, 
there exists a semisimple $G_E$-representation 
$\widetilde \rho_{U_{\widetilde z}}$ over $\Qbar (\calO(U_{\widetilde z}))$ 
such that $\tr \widetilde \rho_{U_{\widetilde z}}= \res_{C U_{\widetilde z}} \circ \res_{\widetilde X C} \circ \xi \circ T$ 
and the restriction of $\widetilde \rho_{U_{\widetilde z}}$ to $W_w$ is monodromic 
by \cite[Lemma 7.8.11, 7.8.14]{BellaicheChenevierAsterisQUE}. 
So $\sigma|_{W_w}$ is monodromic. 
Since $\rho_{z, p}$ is irreducible, $\widetilde \rho_{U_{\widetilde z}}$ is also irreducible. 
By Zorn's lemma, we have 
$\ev^\intal_{\widetilde z C}= \ev_{\widetilde z U_{\widetilde z}}^\dag \circ \res_{CU_{\widetilde z}}^\dag$ for some lifts  
$\ev_{\widetilde z U_{\widetilde z}}^\dag, \res_{CU_{\widetilde z}}^\dag$ of $\ev_{\widetilde z U_{\widetilde z}}, \res_{CU_{\widetilde z}}$ 
respectively. 
So by theorem \ref{Thm: purity sum}, we get part (1) and (2). 
Using theorem \ref{Thm: purity for pseudorepresentations}, we get part (3). 
By \cite[Theorem 1.1]{CaraianiLocalGlobalCompatibility} on local-global compatibility 
of cuspidal automorphic representations for $\gln_n$, the notion of local automorphic 
types is well-defined. Then the rest follows. 
\end{proof}

\subsection*{Acknowledgements}
This work would have been impossible without the support and guidance of Olivier Fouquet. I thank him for carefully going through the earlier versions of this manuscript and making valuable comments. I also thank Adrian Iovita for providing a reference. A part of this work was completed as a component of my thesis and I would like to acknowledge the financial support provided by the Algant consortium during my doctoral studies at Universit\'e Paris-Sud and Universit\`a degli Studi di Padova under the supervision of Olivier Fouquet and Adrian Iovita. I also acknowledge the financial support from the ANR Projet Blanc ANR-10-BLAN 0114. I am indebted to Jo\"el Bella\"iche and Laurent Clozel for suggesting corrections and improvements. During the preparation of this article, I benefited from discussions with Yiwen Ding, Santosh Nadimpalli.

\providecommand{\bysame}{\leavevmode\hbox to3em{\hrulefill}\thinspace}
\providecommand{\MR}{\relax\ifhmode\unskip\space\fi MR }
\providecommand{\MRhref}[2]{%
  \href{http://www.ams.org/mathscinet-getitem?mr=#1}{#2}
}
\providecommand{\href}[2]{#2}

\end{document}